
\documentclass{compositio}
\usepackage{amssymb,amsmath,latexsym,lipsum}
\usepackage{enumitem}
\usepackage{tabulary}
\usepackage{booktabs}
\usepackage{ upgreek }
\usepackage{charter}
\usepackage[title]{appendix}
\usepackage{longtable}
\usepackage{amsthm}
\usepackage{euscript}
\usepackage{color}
\usepackage[T1]{fontenc}
\usepackage[utf8]{inputenc}
\usepackage[singlespacing]{setspace}
\setlength{\marginparwidth}{2cm}
\usepackage[colorinlistoftodos]{todonotes}
\usepackage{tikz}
\usetikzlibrary {positioning}
\definecolor {processblue}{cmyk}{0.96,0,0,0}
\usepackage{bookmark}
\usepackage{multirow,bigdelim}
\usepackage{stackengine}
\usepackage{mathrsfs}
\usepackage{mathtools}
\usepackage{tikz-cd}
\usepackage{pgfplots}
\usepackage{pgf}
\usetikzlibrary{arrows}

\usepackage{stmaryrd} 

\tikzset{
  closed/.style = {decoration = {markings, mark = at position 0.5 with { \node[transform shape, xscale = .8, yscale=.4] {/}; } }, postaction = {decorate} },
  open/.style = {decoration = {markings, mark = at position 0.5 with { \node[transform shape, scale = .7] {$\circ$}; } }, postaction = {decorate} }
}

\makeatother

\newtheorem{theorem}{Theorem}[section]
\newtheorem{definition}[theorem]{Definition}
\newtheorem{lemma}[theorem]{Lemma}
\newtheorem{proposition}[theorem]{Proposition}
\newtheorem{corollary}[theorem]{Corollary}

\newtheorem{example}[theorem]{Example}

\newtheorem{remark}[theorem]{Remark}

\newcommand{\llparen}{\mathopen{((}}
\newcommand{\rrparen}{\mathclose{))}}

\DeclareMathOperator{\spec}{Spec}
\DeclareMathOperator{\proj}{Proj}
\DeclareMathOperator{\Frac}{Frac}
\DeclareMathOperator{\cohom}{H}
\DeclareMathOperator{\integral}{int}
\DeclareMathOperator{\Spf}{Spf}

\title{Kummer--Artin--Schreier--Witt Theory}
\author{Huy Dang}
\email{hdang2@binghamton.edu}
\address{Department of Mathematics and Statistics, Binghamton University, SUNY, Vestal Parkway East, Binghamton, NY 13902, USA}

\author{Khai-Hoan Nguyen-Dang}
\email{khaihoann@gmail.com}
\address{Morningside center of mathematics, Chinese academy of sciences, No.55, Zhongguancun East Road, Beijing, 100190, China}
\date{June 18, 2024}

\classification{14H30, 14H10, 11S15}
\keywords{Ramification theory, non-archimedean geometry, Kummer--Artin--Schreier--Witt theory.}

\pgfplotsset{compat=1.18}

\raggedbottom
\emergencystretch=1em

\begin{document}

\begin{abstract}
We study the problem of lifting the Artin--Schreier--Witt isogeny from characteristic $p>0$ to characteristic $0$, which is central to the lifting problem for Galois covers of algebraic schemes in positive characteristic. We introduce a new technique that associates a Kummer class, representing a tamely ramified cyclic extension, to a Witt vector via Matsuda's Kummer--Artin--Schreier--Witt theory. This viewpoint leads to an explicit construction of a lift of the isogeny over a concrete base ring. Our results lay the groundwork for further applications, including the study of inseparable extensions and Kato's refined Swan conductor.
\end{abstract}

\maketitle


\section{Introduction}
\label{secintro}
Our motivation is the \emph{lifting problem} for Galois covers of curves, which asks whether the following holds: given a finite group $G$ and a $G$-cover $\phi$ of a curve in characteristic $p$, does $\phi$ lift to characteristic $0$? In general, the answer is negative \cite[\S 1.1]{MR3051249}. As with many problems in Galois theory, it is natural to begin with the case where $G$ is abelian. There have been significant recent breakthroughs; for instance, work of Obus--Wewers \cite{MR3194815} and Pop \cite{MR3194816} shows that all cyclic covers of smooth projective curves lift \cite[Theorem~1.1]{MR3194816}.

Recall that every abelian cover can be realized as the pullback of a suitable isogeny of algebraic groups \cite[I, Theorem 4]{MR0918564}. Therefore, to study the lifting problem for abelian covers, one may instead consider lifting cyclic isogenies from characteristic $p$ to characteristic $0$. A classical argument further reduces to the case where $G$ is cyclic of order $p^s$, with isogeny $\wp$ on the ring $W_s$ of Witt vectors of length $s$, yielding the following exact sequence:
\begin{equation}
    \label{eqnexactASW}
    0 \longrightarrow \mathbb{Z}/p^s \longrightarrow W_s \xlongrightarrow{\wp} W_s \longrightarrow 0,
\end{equation}
of group schemes over an algebraically closed field $\overline{k}$ of characteristic $p$. Sekiguchi and Suwa showed that \eqref{eqnexactASW} lifts.

\begin{theorem}[{cf.~\cite[Theorem 8.1]{SS:kasw2}}]
\label{theoremuniversalKASW}
For each positive integer $s$, there exists a flat group scheme $\mathcal{W}_s$ over $R := \mathbb{Z}_{(p)}[\zeta_{p^s}]$ that fits into an exact sequence
\begin{equation}
\label{eqnKASW}
0 \longrightarrow \mathbb{Z}/p^s \longrightarrow \mathcal{W}_s \longrightarrow \mathcal{V}_s := \mathcal{W}_s/(\mathbb{Z}/p^s) \longrightarrow 0
\end{equation}
of group schemes over $\overline{R}$. The generic fiber of \eqref{eqnKASW} is isomorphic to the Kummer exact sequence
\begin{equation}
\label{eqnexactKummer}
0 \longrightarrow \mu_{p^s} \longrightarrow \mathbb{G}_m \xlongrightarrow{(\cdot)^{p^s}} \mathbb{G}_m \longrightarrow 0,
\end{equation}
over $\overline{K}$, where $K = \Frac(R)$. Its special fiber is isomorphic to the Artin--Schreier--Witt sequence \eqref{eqnexactASW} over $\overline{\mathbb{F}}_p$. Furthermore, $\mathcal{W}_s$, as a scheme, is the spectrum of
\begin{equation}
\label{eqnmoduliKASWcovering}
    R \bigg[Y_0, \ldots, Y_{s-1}, \frac{1}{1+\lambda Y_0}, \frac{1}{E_2(Y_0) + \lambda Y_1}, \ldots, \frac{1}{E_{s}(Y_0, \ldots, Y_{s-2}) + \lambda Y_{s-1}}\bigg],
\end{equation}
for explicitly determined polynomials $E_2, \ldots, E_{s}$ with coefficients in $R$, and $\lambda := \zeta_p - 1$. Likewise, $\mathcal{V}_s$ is the spectrum of
\begin{equation}
\label{eqnmoduliKASW}
    R \bigg[X_0, \ldots, X_{s-1}, \frac{1}{1 + p\lambda X_0}, \frac{1}{G_2(X_0) + p\lambda X_1}, \ldots, \frac{1}{G_{s}(X_0, \ldots, X_{s-2}) + p\lambda X_{s-1}}\bigg],
\end{equation}
for explicitly determined polynomials $G_2, \ldots, G_{s}$ with coefficients in $R$.
\end{theorem}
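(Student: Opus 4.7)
The plan is to proceed by induction on $s$, building $\mathcal{W}_s$ as a successive extension of twisted forms of $\mathbb{G}_m$, each interpolating between one component of the Kummer isogeny over the generic fibre and one component of the Artin--Schreier--Witt isogeny over the special fibre.

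For the base case $s=1$, I would introduce the twisted multiplicative group scheme $\mathcal{G}^{(\lambda)} := \spec R[Y_0, (1+\lambda Y_0)^{-1}]$ with the group law $Y_0 + Y_0' + \lambda Y_0 Y_0'$, making $1+\lambda Y_0$ grouplike. The change of variables $T = 1+\lambda Y_0$ realises $\mathcal{G}^{(\lambda)}_K \cong \mathbb{G}_{m,K}$, while $\lambda \equiv 0$ modulo the maximal ideal of $R$ returns $\mathbb{G}_a$. The map $Y_0 \mapsto \lambda^{-p}((1+\lambda Y_0)^p - 1)$ defines an $R$-integral isogeny $\mathcal{G}^{(\lambda^p)} \to \mathcal{G}^{(\lambda)}$ (integrality uses $v_\lambda(p) = p-1$ together with $p \mid \binom{p}{i}$ for $1 \le i \le p-1$), whose kernel is $\mu_p$ generically and $\mathbb{Z}/p$ specially; this yields $\mathcal{W}_1$ and $\mathcal{V}_1$.

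For the inductive step, assuming the sequence $0 \to \mathbb{Z}/p^{s-1} \to \mathcal{W}_{s-1} \to \mathcal{V}_{s-1} \to 0$ has been built, I would realise $\mathcal{W}_s$ as an extension
\[
0 \to \mathcal{G}^{(\lambda)} \to \mathcal{W}_s \to \mathcal{W}_{s-1} \to 0,
\]
classified by a symmetric $2$-cocycle which, after the substitution $T_i = E_{i+1}(Y_0,\dots,Y_{i-1}) + \lambda Y_i$, specialises generically to the multiplicative cocycle on $\mathbb{G}_m^s$ and in characteristic $p$ to the Witt carry polynomial. Producing such cocycles explicitly is the content of the polynomials $E_i$; the $G_i$ arise symmetrically on the $\mathcal{V}$-side with $\lambda$ replaced by $p\lambda$. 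The isogeny $\mathcal{W}_s \to \mathcal{V}_s$ is then assembled from the $s=1$ isogeny applied on each Witt layer, twisted by the $E_i$.

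Once the polynomials are in hand, four checks remain: (i) associativity and the existence of antipodes, which reduce to polynomial identities verifiable after inverting $\lambda$, where they become the corresponding identities for $\mathbb{G}_m^s$; (ii) $R$-flatness of \eqref{eqnmoduliKASWcovering}, which follows from $R$-torsion-freeness since $R$ is a Dedekind domain; (iii) the identification of the generic fibre with \eqref{eqnexactKummer} via $T_i = E_{i+1} + \lambda Y_i$; and (iv) the identification of the special fibre with \eqref{eqnexactASW} by showing that modulo the maximal ideal of $R$ the coordinates $Y_i$ become Witt components and the isogeny becomes $F - \mathrm{id}$. The main obstacle will be the cocycle construction itself: one must exhibit $E_s$ whose image in the generic fibre is the multiplicative cocycle and whose image in the special fibre is the additive Witt carry polynomial, while keeping the only denominators appearing in \eqref{eqnmoduliKASWcovering} equal to $1+\lambda Y_0,\dots,E_s+\lambda Y_{s-1}$. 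This integrality is the delicate arithmetic heart of the argument, where the Witt addition polynomials, the congruence $v_\lambda(p) = p-1$, and the divisibility of middle binomial coefficients by $p$ conspire.
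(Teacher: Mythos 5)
Your proposal is, in outline, the Sekiguchi--Suwa route that this paper deliberately avoids: build $\mathcal{W}_s$ as a tower of extensions of the twisted multiplicative groups $\mathcal{G}^{(\lambda)}$, classified by symmetric $2$-cocycles interpolating the multiplicative cocycle on $\mathbb{G}_m^s$ with the Witt addition cocycles. The paper's approach is genuinely different. It works through Matsuda's theory: a Kummer class $u$ over the integral Robba ring is assigned a Witt vector $\underline{X}'$ via the deformed Artin--Hasse exponential $E_{s,p}(p^s\underline{X}') = u$, and the polynomials $G_i$ and $E_i$ are then obtained by \emph{truncating} the explicit power series $G_{i,p}$ and $E_{i,p}$ at the valuations $\tfrac{p}{p-1}$ and $\tfrac{1}{p-1}$ respectively (Propositions \ref{propapproxG} and \ref{propfiniteEp} show these truncations are finite). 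The group law is then \emph{transported} from the Witt vector group via the nonalgebraic coordinate maps $\gamma,\tau$ and shown a posteriori to be algebraic over $R$; no $2$-cocycle is ever written down. What Matsuda's machinery buys is exactly what you flag as the main obstacle.

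And that is the gap: you correctly identify that exhibiting $E_s$ as an integral interpolating cocycle, with denominators confined to the products $E_{i}+\lambda Y_{i-1}$, is ``the delicate arithmetic heart,'' and your proposal does not carry it out. The observations about $v_\lambda(p)=p-1$, $p\mid\binom{p}{j}$, and the layer-by-layer isogeny are the right ingredients for the $s=1$ case (modulo the slip: the isogeny should be $\mathcal{G}^{(\lambda)}\to\mathcal{G}^{(p\lambda)}$, $Y_0\mapsto \frac{1}{p\lambda}((1+\lambda Y_0)^p-1)$, not $\lambda^{-p}(\cdots)$, and the arrow direction is reversed in what you wrote), but for $s>1$ the existence of an integral symmetric $2$-cocycle with the right generic and special fibres and with prescribed denominators is precisely what has to be proved, not assumed. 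Without a mechanism — either the Sekiguchi--Suwa explicit recursion or the truncated-exponential mechanism this paper uses via Propositions \ref{propcomputeX}, \ref{propcomputeWittvectorF}, \ref{propY'}, and \ref{propminimialpoly} — the inductive step does not close.
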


One immediate implication of Theorem~\ref{theoremuniversalKASW} is the following result.
\begin{theorem}[{cf.~\cite[Theorem 3.8]{SS:kasw2}}]
\label{theoremuniversality}
If $B$ is a flat local $R$-algebra, then any unramified $\mathbb{Z}/p^s$-cover $\spec C \to \spec B$ is given by a Cartesian diagram of the form
\begin{center}
\begin{tikzcd}
    \spec C \arrow{r} \arrow[d] &   \mathcal{W}_s \arrow[d] \\ 
    \spec B \arrow{r} & \mathcal{V}_s.
\end{tikzcd}
\end{center}
\end{theorem}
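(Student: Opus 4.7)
The plan is to identify unramified $\mathbb{Z}/p^s$-covers of $\spec B$ with fppf $\mathbb{Z}/p^s$-torsors and then classify them via the long exact sequence attached to \eqref{eqnKASW}. Since $\mathbb{Z}/p^s$ is étale over $R$, an unramified $\mathbb{Z}/p^s$-cover $\spec C \to \spec B$ is precisely an fppf $\mathbb{Z}/p^s$-torsor on $\spec B$, hence classified by a class $\xi \in H^1_{\rm fppf}(\spec B, \mathbb{Z}/p^s)$.

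Base-changing \eqref{eqnKASW} to $\spec B$ and passing to fppf cohomology produces the long exact sequence
\begin{equation*}
\mathcal{W}_s(B) \longrightarrow \mathcal{V}_s(B) \xrightarrow{\,\delta\,} H^1_{\rm fppf}(\spec B, \mathbb{Z}/p^s) \longrightarrow H^1_{\rm fppf}(\spec B, \mathcal{W}_s).
\end{equation*}
By the standard description of the connecting homomorphism, $\delta$ sends $x \in \mathcal{V}_s(B)$, viewed as a morphism $\spec B \to \mathcal{V}_s$, to the class of the pullback torsor $\spec B \times_{\mathcal{V}_s} \mathcal{W}_s$---exactly the Cartesian diagram displayed in the theorem. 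The conclusion therefore reduces to the surjectivity of $\delta$, equivalently to
\[
H^1_{\rm fppf}(\spec B, \mathcal{W}_s) = 0.
\]

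I would establish this vanishing by induction on $s$. For the base case $s = 1$, the homomorphism $Y_0 \mapsto 1 + \lambda Y_0$ realizes $\mathcal{W}_1$ as a subgroup scheme of $\mathbb{G}_m$, since $(1+\lambda Y_0)(1+\lambda Y_0') = 1 + \lambda(Y_0 + Y_0' + \lambda Y_0 Y_0')$; combined with $\operatorname{Pic}(B) = 0$ (from locality of $B$) and a short analysis of the cokernel $\mathbb{G}_m/\mathcal{W}_1$---which is concentrated on the special fibre---this yields $H^1_{\rm fppf}(\spec B, \mathcal{W}_1) = 0$. For general $s$, I would extract from the explicit presentation \eqref{eqnmoduliKASWcovering} a short exact sequence
\[
0 \longrightarrow \mathcal{W}_{s-1} \longrightarrow \mathcal{W}_s \longrightarrow \mathcal{W}_1 \longrightarrow 0
\]
of flat $R$-group schemes, induced by the projection onto the top Witt coordinate; the associated long exact sequence, combined with the inductive hypothesis and the $s = 1$ case, then gives the desired vanishing.

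The main obstacle is checking that this coordinate projection genuinely descends to a homomorphism of flat group schemes over $R$ with kernel $\mathcal{W}_{s-1}$, as the polynomials $E_2, \ldots, E_s$ couple the Witt coordinates in a delicate nonlinear fashion. This compatibility is built into the explicit group-law construction underlying Theorem \ref{theoremuniversalKASW}, but extracting it cleanly---and in particular verifying that the induced filtration is exact over the whole base $R$, not merely on each fibre---is the technical heart of the argument. Once this is in place, the theorem follows by a purely formal cohomological chase.
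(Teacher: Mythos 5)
Your overall strategy matches the paper's: translate the theorem into the vanishing $H^1(\spec B, \mathcal{W}_s) = 0$, establish the case $s=1$ by embedding $\mathcal{W}_1$ into $\mathbb{G}_m$ via $Y_0 \mapsto 1+\lambda Y_0$ and exploiting the locality of $B$ (Hilbert 90 plus surjectivity of $B^\times \to (B/\pi B)^\times$), and then run an induction along a filtration of $\mathcal{W}_s$. The paper makes the $s=1$ step precise with the sequence $0 \to \mathcal{W}_1 \to \mathbb{G}_{m,R} \to \iota_*\mathbb{G}_{m,R_\pi} \to 0$ (quoted from a lemma of Waterhouse), which is exactly your ``cokernel concentrated on the special fibre.''

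The place where your proposal diverges, and where it runs into trouble, is the short exact sequence you use for the inductive step. You posit
\begin{equation*}
0 \longrightarrow \mathcal{W}_{s-1} \longrightarrow \mathcal{W}_s \longrightarrow \mathcal{W}_1 \longrightarrow 0,
\end{equation*}
with the surjection given by ``projection onto the top Witt coordinate.'' If ``top'' means $Y_{s-1}$, this is not a homomorphism at all: addition on the last Witt component is nonlinear and depends on all lower components, precisely the coupling you worry about. If ``top'' means $Y_0$, the projection $\mathcal{W}_s \to \mathcal{W}_1$ is indeed a homomorphism (since $Y_0'$ depends only on $Y_0$), but then the kernel is the locus $\{Y_0 = 0\}$, which you would still have to identify with $\mathcal{W}_{s-1}$. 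That identification is a Verschiebung-type statement that does not come for free: the polynomials $E_i$ are truncations that need not commute with setting $Y_0 = 0$, and $\mathcal{W}_{s-1}$ is a priori a scheme over $\mathbb{Z}_{(p)}[\zeta_{p^{s-1}}]$, so a base-change comparison is needed. The paper avoids all of this by using the opposite filtration,
\begin{equation*}
0 \longrightarrow \mathcal{W}_1 \longrightarrow \mathcal{W}_s \longrightarrow \mathcal{W}_{s-1} \longrightarrow 0,
\end{equation*}
where $\mathcal{W}_s \to \mathcal{W}_{s-1}$ is the map dual to the obvious inclusion of coordinate rings (forget $Y_{s-1}$) --- the deformation of the Witt-vector restriction $W_s \to W_{s-1}$ --- and the kernel is the closed subscheme $\{Y_0 = \cdots = Y_{s-2} = 0\}$, whose coordinate ring is literally $R\big[Y_{s-1}, \tfrac{1}{1+\lambda Y_{s-1}}\big]$ because each $E_i$ has constant term $1$, and whose induced group law is that of $\mathcal{W}_1$. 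This is the clean, correct choice of filtration, and it matches the filtration $0 \to \mathbb{Z}/p \to \mathbb{Z}/p^s \to \mathbb{Z}/p^{s-1} \to 0$ in the paper's accompanying commutative diagram. Since the inductive cohomological argument only needs vanishing of $H^1$ for both the sub and the quotient, swapping the two does not change the formal structure of the proof --- but the paper's sequence is the one that actually exists without additional work. I would replace your sequence with the paper's and the rest of your argument goes through.
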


However, the paper by Sekiguchi and Suwa remains unpublished; see \cite{MR3232771} for a summary. Around the same time, Matsuda developed his own version of Kummer--Artin--Schreier--Witt theory from the perspective of $p$-adic differential equations \cite{MR1324636}. More precisely, let $\mathbb{K}$ be a Henselian local field of mixed characteristic $(0,p)$ with imperfect residue field $\kappa$. Matsuda established that the canonical restriction morphism $\cohom^1(\kappa, \mathbb{Z}/p^s) \rightarrow \cohom^1(\mathbb{K}, \mathbb{Z}/p^s)$ is induced by the morphism
\begin{equation}
\label{eqnMatsuda1}
    \begin{split}
    W_s(\mathcal{R}^{\integral}) & \xlongrightarrow[]{\Psi_s} (\mathcal{R}^{\integral})^{\times}, \\
        \underline{X}=\left(X_0, \ldots, X_{s-1}\right) & \mapsto E_{s,p}(p^s\underline{X}),
    \end{split}
\end{equation}
where $\mathcal{R}^{\integral}$ is a Henselian discrete valuation ring over $\mathbb{K}$ (for more details, see \S~\ref{secMatsudatheory}), and $E_{s,p}((X_0, \ldots, X_{s-1})) \in \mathbb{Z}_{(p)}[\zeta_{p^s}]\llbracket X_0, \ldots, X_{s-1} \rrbracket$ is a deformation of the Artin--Hasse exponential function. 

Specifically, the morphism \eqref{eqnMatsuda1} allows one to study a cyclic extension in mixed characteristic defined by a Kummer class (for example, one represented by $E_{s,p}(p^s \underline{X})$) by examining the associated Witt vector $\underline{X}$. This approach often simplifies the analysis, as illustrated by many results on Galois extensions in equal characteristic $p$ obtained through the study of Witt vectors \cite{MR1465067, MR2567506}.

In this paper, we reprove the above results using Matsuda's framework. More precisely, to recover Theorem~\ref{theoremuniversalKASW}, we apply an algebraization process to \eqref{eqnMatsuda1} to obtain a polynomial $E_{s}(\underline{X}) \in \mathbb{Z}_{(p)}[\zeta_{p^s}][X_0, \ldots, X_{s-1}]$, which is a suitable truncation of $E_{s,p}(p^s \underline{X})$. We then show that, for $R = \mathbb{Z}_{(p)}[\zeta_{p^s}]$, the cover
\begin{equation}
\label{eqncoverintro}
    Z_{s}^{p^s} = E_{s}(X_0, \ldots, X_{s-1})
\end{equation}
of $\mathbb{P}^s_R = \operatorname{Proj} R[X_0, \ldots, X_{s-1}, Y]$ induces \eqref{eqnKASW} after removing the ramification divisor.

\begin{remark}
    Our polynomials $E_1, \ldots, E_s$ and $G_1, \ldots, G_s$ in \eqref{eqnmoduliKASWcovering} and \eqref{eqnmoduliKASW}, defined in Definition~\ref{defnEi} and Proposition~\ref{propapproxG}, differ from those used by Sekiguchi and Suwa. In fact, these polynomials are not unique, as they approximate power series arising from Matsuda's theory within a certain degree of accuracy.
\end{remark}

The key strategy in the proof is to associate a Kummer class, which represents a cyclic
extension of degree $p^{s}$ of a valued field $\mathbb{K}$ of mixed characteristic $(0,p)$, with a
Witt vector $\underline{X} = (X_0,\ldots,X_{s-1}) \in W_s(\mathbb{K})$ using Matsuda's theory (see Proposition \ref{propcomputeX}). We believe
that this technique has potential applications beyond the scope of this paper. For instance,
enlarging the domain and codomain of \eqref{eqnMatsuda1} may allow us to address
inseparable extensions and to study the refined Swan conductor \cite{MR991978}, an
important invariant measuring how far a wildly ramified extension of a field with imperfect
residue field is from having separable reduction.

When the field $\mathbb{K}$ is of equal characteristic $p$ with imperfect residue field, any
$\mathbb{Z}/p^{s}$-extension of $\mathbb{K}$ can be represented by a Witt vector
$\underline{x} = (x_0,\ldots,x_{s-1}) \in W_s(\mathbb{K})$ (see \S\ref{secasw}). Moreover, when
$\underline{x}$ satisfies certain conditions, the refined Swan conductor is completely
determined by $\underline{x}$ (see \cite[Lemma 3.7]{MR991978}, \cite[Proposition 2.8]{MR3726102}). We conjecture that an analogue of this result holds in the
mixed characteristic setting, where $\underline{X}$ plays the role of $\underline{x}$. In this
paper, we prove the conjecture in the case $s = 1$.

\begin{proposition}
\label{proprsworderp}
Let $\mathbb{K}$ be a field of mixed characteristic $(0,p)$ with uniformizer $\pi$, residue
field $\kappa$, and a valuation $\nu$ normalized by $\nu(p)=1$. Let $e$ be the absolute ramification index. Let $(X_0)\in W_1(\mathbb{K})$
denote a Witt vector associated with a $\mathbb{Z}/p$-extension of $\mathbb{K}$. For
$t\in\mathbb{R}$, set
\[
\mathfrak{p}_{\mathbb{K}}^{\,t} := \{\,x\in \mathbb{K} \mid \nu(x)\ge t\,\}.
\]

Assume that
\[
-\frac{p}{p-1} < \nu(X_0) < 0,
\]
and define
\[
x_0 := X_0\,\pi^{-e\nu(X_0)}.
\]
Suppose that the reduction $\overline{x}_0$ is not contained in $\kappa^{p}$.
Then the refined Swan conductor of the associated character $\mathscr{F}$ is
\[
\mathrm{rsw}(\mathscr{F})
= -dX_0 \otimes 1=  -\pi^{e\nu(X_0)} \otimes d\overline{x}_0
\in
\mathfrak{p}_{\mathbb{K}}^{-{\mathrm{sw}}(\mathscr{F})}
\otimes_{\mathcal{O}_{\mathbb{K}}}\Omega^1_{\kappa},
\]
where $\mathrm{sw}(\mathscr{F}) = -\nu(X_0)$ is the Swan conductor of $\mathscr{F}$.
\end{proposition}


\subsection{Outline}

In Section~\ref{secbackground}, we review Kummer theory and Artin--Schreier--Witt theory, which classify tamely and wildly ramified cyclic extensions, respectively. We also recall Matsuda's Kummer--Artin--Schreier--Witt theory and explain how it leads to our explicit construction.

Section~\ref{secconstructW} constructs, at the level of field extensions, the cover of $\mathbb{P}^s_R$ defined by~\eqref{eqncoverintro}. Concretely, we build the relevant Kummer class using the elements $G_1,\ldots,G_s$ from~\eqref{eqnmoduliKASW}, obtained by truncating the power series arising from Matsuda’s theory. From this Kummer extension, Section~\ref{secE_i} derives the polynomials $E_1,\ldots,E_s$ in~\eqref{eqnmoduliKASWcovering}.

In Section~\ref{secintclosure}, we show that these constructions yield a cyclic \'etale cover of an affine scheme with good reduction. Section~\ref{secgroupstructure} studies the underlying algebraic group structures via Witt vector operations; using Hopf algebras and descent, we prove that the cover is an isogeny, establishing Theorem~\ref{theoremuniversalKASW}. Section~\ref{secuniversality} proves Theorem~\ref{theoremuniversality}. Finally, Section~\ref{secrefinedswan} discusses the refined Swan conductor for cyclic characters and proves Proposition~\ref{proprsworderp}.

\subsection{Notations}

The letter $K$ will always denote a field of mixed characteristic $(0,p)$ with uniformizer $\pi$ and a discrete valuation $\nu : K^{\times} \to \mathbb{Q}$, normalized so that $\nu(p) = 1$. We set $e := \nu(\pi)$ for the absolute ramification index. For any $u \in \mathcal{O}_K$, we denote by $\overline{u}$ its image in the residue field. The ring of integers of $K$ will be written $R$, and its residue field $k = R/(\pi)$ is assumed to be perfect. An example to keep in mind is $K = \mathbb{Q}(\zeta_{p^s})$ with $\pi = \zeta_{p^s} - 1$, for which $\nu(\pi) = \tfrac{1}{p^{s-1}(p-1)}$. We also set $\lambda := \zeta_p - 1$, a constant that will appear frequently.

We fix an algebraic closure $\overline{K}$ of $K$. We write $\mathbb{K}$ for a Henselian local subfield of the Laurent series field $K\!\llparen X \rrparen$; for instance, one may take $K\!\llparen X \rrparen^{\mathrm{bd}}$, the subfield consisting of Laurent series with bounded coefficients. The valuation $\nu$ extends naturally to the Gauss valuation on $\mathbb{K}$. For example, if $f(X) = \sum_{i \ge n} a_i X^i \in \mathbb{K}$, then
\[
\nu(f) = \inf_{i \ge n} \{\nu(a_i)\}.
\]

\subsection*{Acknowledgements}

We thank Kiran Kedlaya, Ariane Mézard, Andrew Obus, Frans Oort, Matthieu Romagny, and Dajano Tossici for valuable discussions. We are especially grateful to Adrian Vasiu for carefully reading an earlier version of this manuscript. 

\section{Background}
\label{secbackground}
In this section, we briefly review the classical theories that form the foundation of our work: Artin--Schreier--Witt theory, which governs wildly ramified cyclic extensions in characteristic $p$; Kummer theory, which describes tamely ramified cyclic extensions; and Matsuda's Kummer--Artin--Schreier--Witt theory, which provides a unifying framework in mixed characteristic $(0,p)$.

\subsection{Artin--Schreier--Witt theory}
\label{secasw}
Suppose $k$ is a field of characteristic $p$. Then we have the short exact sequence
\begin{equation}
\label{eqnASWdetailed}
    \begin{split}
        0 \xlongrightarrow{} \mathbb{Z}/p^s \cong W_s(\mathbb{F}_p) \xlongrightarrow{} W_s & \xlongrightarrow{\wp} W_s \longrightarrow 0, \\
        \underline{y} = (y_0, \ldots, y_{s-1}) & \longmapsto \underline{y} - \sigma(\underline{y}),
    \end{split}
\end{equation}
of group schemes over $\overline{k}$, where $\sigma: (y_0, \ldots, y_{s-1}) \mapsto (y_0^p, \ldots, y_{s-1}^p)$ is the Frobenius operator on Witt vectors. By Hilbert's Theorem~90, this induces the identification
\begin{equation*}
    \cohom^1(k, \mathbb{Z}/p^s) \cong W_s(k)/\wp(W_s(k)).
\end{equation*}
We denote by $\mathfrak{K}_{s}(\underline{x})$ the character in $\cohom^1(k, \mathbb{Z}/p^s)$ associated with the Witt vector $\underline{x} \in W_s(k)$.

One may view $\wp: W_s \to W_s$ in \eqref{eqnASWdetailed} as defining a $\mathbb{Z}/p^s$-extension of the function field $\mathcal{K} := \overline{k}(x_0, \ldots, x_{s-1})$ associated with the Witt vector $(x_0, \ldots, x_{s-1})$. Furthermore, if $\wp$ corresponds to a character $\overline{\chi} \in \cohom^1(\mathcal{K}, \mathbb{Z}/p^s)$, then $\overline{\chi}^{\otimes p^{s-j}}$ is the $\mathbb{Z}/p^j$-subextension corresponding to the Witt vector $(x_0, \ldots, x_{j-1})$.

Moreover, if we write
\begin{equation*}
    \wp (y_0, \ldots, y_{s-1}) = (y_0 - y_0^p, y_1 - y_1^p + f_2(y_0), \ldots, y_{s-1} - y_{s-1}^p + f_{s}(y_0, \ldots, y_{s-2})),
\end{equation*}
where $f_{i}(y_0, \ldots, y_{i-2}) \in \mathbb{F}_p(y_0, \ldots, y_{i-2})$ for $i = 2, \ldots, s$ and $f_1 = 0$, then $\overline{\chi}$ is defined by the system of Artin--Schreier equations
\begin{equation}
\label{eqnsystemAS}
    y_{i-1} - y_{i-1}^p + f_i(y_0, \ldots, y_{i-2}) = x_{i-1}.
\end{equation}

\subsection{Kummer Theory}
\label{seckummer}

Fix $m \in \mathbb{Z}_{>0}$. Suppose $K$ contains a primitive $m$-th root of unity and has characteristic coprime to $m$. By Kummer theory, we have the isomorphism
\begin{equation*}
    \cohom^1(K, \mathbb{Z}/m) \cong K^{\times} /(K^{\times})^m.
\end{equation*}
This follows from the Kummer exact sequence over $\overline{K}$:
\begin{equation*}
    0 \longrightarrow \mu_m \longrightarrow \overline{K}^{\times} \xlongrightarrow{(\cdot)^m} \overline{K}^{\times} \longrightarrow 0.
\end{equation*}

We now specialize to the case $m = p^s$. In particular, each $\chi \in \cohom^1(K, \mathbb{Z}/p^s)$ corresponds to an element $G \in K^{\times}/(K^{\times})^{p^s}$.

\begin{proposition}
Every class $[G] \in K^{\times}/(K^{\times})^{p^{s}}$ has a representative of the form
\[
    G \equiv \prod_{i=0}^{s-1} G_i^{p^{i}} \quad \big(\mathrm{mod}\ (K^{\times})^{p^{s}}\big),
\]
with each $G_i \in K^{\times}\setminus (K^{\times})^{p}$ or $G_i = 1$.
\end{proposition}

\begin{proof}
Choose a set $S \subset K^{\times}$ of representatives for
$K^{\times}/(K^{\times})^{p}$ such that $1 \in S$ and, for every $x \in S \setminus \{1\}$,
we have $x \notin (K^{\times})^{p}$.

If $s=1$, then any $G$ is congruent modulo $(K^{\times})^{p}$
to some $G_0 \in S$, so $G \equiv G_0^{p^0}$.

Now suppose the statement holds for $s-1 \geq 1$, i.e., $s \geq 2$.  
Take $g \in K^{\times}$ representing $[G] \in K^{\times}/(K^{\times})^{p^{s}}$.  
Choose $G_0 \in S$ such that $g \in G_0 (K^{\times})^{p}$, i.e.
\[
    g = G_0 a^{p}, \qquad a \in K^{\times}.
\]
By the induction hypothesis, there exist $H_0, \dots, H_{s-2} \in S$ with
\[
    a \equiv \prod_{i=0}^{s-2} H_i^{p^{i}} 
    \quad \big(\mathrm{mod}\ (K^{\times})^{p^{s-1}}\big).
\]
Raising this congruence to the $p$-th power and multiplying by $G_0$, we obtain
\[
    g \equiv G_0 \left(\prod_{i=0}^{s-2} H_i^{p^{i}}\right)^{p}
       = G_0 \prod_{i=0}^{s-2} H_i^{p^{i+1}}
       = \prod_{i=0}^{s-1} G_i^{p^{i}}
    \quad \big(\mathrm{mod}\ (K^{\times})^{p^{s}}\big),
\]
where $G_0$ is as chosen and $G_i := H_{i-1}$ for $i \geq 1$.  
Each $G_i \in S$, hence either $1$ or not a $p$-th power.  
This completes the induction.
\end{proof}

As before, we write $\chi = \mathfrak{K}_s(G)$. As in Section~\ref{secasw}, $\chi^{\otimes p^{s-j}}$ corresponds to a character in $\cohom^1(K, \mathbb{Z}/p^j)$ defined by the Kummer extension
\begin{equation*}
    Z_{j-1}^{p^j} = \prod_{i=0}^{j-1} G_i^{p^i}.
\end{equation*}
In fact, $\chi$ can be described by a system of successive Kummer extensions of degree $p$:
\begin{equation*}
    Z_i^p = Z_{i-1} G_i, \quad i = 0, \ldots, s-1,
\end{equation*}
with $Z_{-1} = 1$.

We thus obtain canonical isomorphisms:
\begin{equation} \label{eqphis}
\begin{alignedat}{2}
    (K^{\times})^{\oplus s} \big/ \phi_s\big((K^{\times})^{\oplus s}\big) 
        &\xlongrightarrow{\sim} 
        &\; K^{\times} \big/ \big(K^{\times}\big)^{p^s} 
        &\xlongrightarrow{\sim} 
        \cohom^1\big(K, \mathbb{Z}/p^s\big),  \\
    (a_1, \ldots, a_s) 
        &\longmapsto 
        &\; [a_1 a_2^p \cdots a_s^{p^{s-1}}] 
        &\longmapsto 
        \mathfrak{K}_s\big(a_1 a_2^p \cdots a_s^{p^{s-1}}\big),
\end{alignedat}
\end{equation}
where $\phi_s(a_1, \ldots, a_s) = (a_1^p, a_2^p a_1^{-1}, \ldots, a_s^p a_{s-1}^{-1})$.

\begin{remark}
    The quotient $(K^{\times})^{\oplus s} / \phi_s((K^{\times})^{\oplus s})$ is a group under coordinatewise multiplication. We declare
    \[
        (a_1, \ldots, a_s) \sim (b_1, \ldots, b_s)
    \]
    if there exists $(c_1, \ldots, c_s)$ such that
    \[
        (a_1, \ldots, a_s) = (b_1, \ldots, b_s) \cdot (c_1^p, c_2^p c_1^{-1}, \ldots, c_s^p c_{s-1}^{-1}).
    \]
    This is the equivalence relation induced by $K^{\times}/(K^{\times})^{p^s}$.
\end{remark}

This yields a Kummer-type exact sequence of group schemes:
\begin{equation}
\label{eqnexactKummerp}
    0 \xlongrightarrow{} \mu_{p^s} \xlongrightarrow{\;\iota\;} (\mathbb{G}_m)^s 
    \xlongrightarrow{\phi_s} (\mathbb{G}_m)^s \longrightarrow 0,
\end{equation}
where $\iota$ is determined by
\[
    \iota(\zeta_{p^s}) = (\zeta_p, \zeta_{p^2}, \ldots, \zeta_{p^s}).
\]

\subsection{Kummer--Artin--Schreier--Witt theory}
\label{secKASW}
We review Matsuda’s theory \cite{MR1324636} and follow the notation of \cite[\S 9]{MR3702308}.

\subsubsection{Deformed Artin--Hasse exponential function} 
We introduce a deformation of the Artin--Hasse exponential function and record basic properties.

\begin{definition}
    The \emph{Artin--Hasse exponential series at $p$} is the formal power series
    \begin{equation*}
        E_p(t) := \exp \left( \sum_{i=0}^{\infty} \frac{t^{p^i}}{p^i} \right) 
        \in \mathbb{Z}_{(p)}\llbracket t \rrbracket.
    \end{equation*}
    Define
    \begin{equation}
        E_{s,p}(t) := \frac{E_{p}(\zeta_{p^s}t)}{E_p(t)}
        = \exp \left( \sum_{i=0}^{s-1} (\zeta_{p^{s-i}} - 1)\frac{t^{p^i}}{p^i} \right) 
        \in \mathbb{Z}_{(p)}[\zeta_{p^s}]\llbracket t \rrbracket,
    \end{equation}
    and for $\underline{a}=(a_0, \ldots, a_{s-1})$,
    \begin{equation}
        E_{s,p}(\underline{a}) := \prod_{i=0}^{s-1} E_{s-i,p}(a_i) 
        \in \mathbb{Z}_{(p)}[\zeta_{p^s}]\llbracket a_0, \ldots, a_{s-1} \rrbracket,
    \end{equation}
    which can be rewritten as
    \begin{equation}
    \label{eqnespghost}
        E_{s,p}(\underline{a}) 
        = \exp \left( \sum_{i=0}^{s-1} (\zeta_{p^{s-i}} - 1)\frac{w_i}{p^i} \right),
    \end{equation}
    where $w_i = \sum_{j=0}^i p^j a_j^{p^{i-j}}$ is the $i$th ghost component of $\underline{a}$.
\end{definition}

It follows from \eqref{eqnespghost} that
\begin{equation}
\label{eqnEscommutative}
    E_{s,p}(\underline{a}) E_{s,p}(\underline{b}) 
    = E_{s,p}(\underline{a} + \underline{b}).
\end{equation}

\begin{definition}
    Define
    \[
        G_{s,p}(\underline{a}) := \frac{E_{s,p}(p\underline{a})}{E_{s-1,p}(\underline{a})} 
        \in \mathbb{Z}_{(p)}[\zeta_{p^s}]\llbracket a_0,\dots,a_{s-1} \rrbracket.
    \]
    When $s=1$, we interpret $E_{0,p}=1$, so $G_{1,p}((a_0)) = E_{1,p}(pa_0)$.
\end{definition}

By \eqref{eqnEscommutative}, we have
\begin{equation}
\label{eqnGscommutative}
    G_{s,p}(\underline{a}) G_{s,p}(\underline{b}) = G_{s,p}(\underline{a} + \underline{b}).
\end{equation}

\begin{lemma}
\label{lemmaG}
We have
\[
  G_{s,p}(\underline{a}) \in 
  \mathbb{Z}_{(p)}[\zeta_{p^s}]
  \llbracket
    (\zeta_{p^s}-1)a_0,\dots,(\zeta_p-1)a_{s-1}
  \rrbracket .
\]
\end{lemma}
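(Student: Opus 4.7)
The plan is to reduce the multi-variable statement to a one-variable integrality claim, and then to establish that claim by an explicit logarithmic expansion combined with an Artin-Hasse-type integrality argument.

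First, using $E_{s,p}(\underline{a}) = \prod_{i=0}^{s-1} E_{s-i,p}(a_i)$ in both the numerator and the (truncated) denominator of $G_{s,p}$, I would show
\[
  G_{s,p}(a_0, \ldots, a_{s-1}) \;=\; \prod_{i=0}^{s-1} G_{s-i,p}(a_i),
\]
under the convention $E_{0,p} \equiv 1$, so that $G_{1,p}(t) = E_{1,p}(pt)$. Since $\mathbb{Z}_{(p)}[\zeta_{p^{s-i}}] \subseteq \mathbb{Z}_{(p)}[\zeta_{p^s}]$, this reduces the lemma to the one-variable statement $G_{j,p}(t) \in \mathbb{Z}_{(p)}[\zeta_{p^j}]\llbracket(\zeta_{p^j}-1)t\rrbracket$ for every $j \ge 1$.

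The base case $j=1$ is direct: $G_{1,p}(t) = \exp(\lambda p t)$ has $(\lambda t)^n$-coefficient $p^n/n!$, which is $p$-integral by $v_p(n!) = (n-s_p(n))/(p-1) \le n$. For $j \ge 2$ I would expand $G_{j,p}(t) = E_p(\zeta_{p^j} p t)\,E_p(t)/\bigl[E_p(p t)\,E_p(\zeta_{p^{j-1}} t)\bigr]$, take logarithms, and read off
\[
  \log G_{j,p}(t) \;=\; \sum_{i=0}^{j-1} c_i\, t^{p^i},
\]
with $c_{j-1} = \lambda\, p^{p^{j-1}-j+1}$ and $c_i = p^{-i}\bigl[(\zeta_{p^{j-i}}-1)p^{p^i} - (\zeta_{p^{j-i-1}}-1)\bigr]$ for $i \le j-2$. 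The key algebraic input is $\zeta_{p^{j-i-1}} - 1 = (1 + (\zeta_{p^{j-i}} - 1))^p - 1 = \sum_{k=1}^p \binom{p}{k}(\zeta_{p^{j-i}}-1)^k$; substituting this cancels the leading $p(\zeta_{p^{j-i}}-1)$ against the $p^{p^i}(\zeta_{p^{j-i}}-1)$ and exhibits each $c_i$ as an explicit polynomial in $\zeta_{p^{j-i}}-1$.

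The hard step is to conclude from this expansion that $\exp\bigl(\sum_i c_i\,u^{p^i}/(\zeta_{p^j}-1)^{p^i}\bigr) \in \mathbb{Z}_{(p)}[\zeta_{p^j}]\llbracket u\rrbracket$, where $u = (\zeta_{p^j}-1)t$. Integrality of the log coefficients $c_i/(\zeta_{p^j}-1)^{p^i}$ is neither necessary nor sufficient: on the one hand these can have negative $p$-adic valuation (already for $p = 3, j = 3, i = 1$ one gets $v_p = -2/3$), and on the other hand $\exp(-\sum t^n/n) = 1-t$ shows integer-coefficient exponentials can come from non-integral logs. The positive conclusion instead requires Dwork-style congruences between consecutive $c_i$'s, which I plan to extract by iterating $(1+x)^p - 1 \equiv x^p \pmod p$; this Frobenius compatibility feeds into an Artin-Hasse integrality criterion tailored to the totally ramified valuation ring $\mathbb{Z}_{(p)}[\zeta_{p^j}]$. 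This last integrality check, rather than any earlier manipulation, is the main technical obstacle, since the classical Dieudonn\'e-Dwork lemma is formulated over unramified extensions and must be adapted to our setting.
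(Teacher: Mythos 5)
There is a genuine error in your reduction step, and the integrality claim you yourself flag as ``the hard step'' is left unproved, so the proposal as written does not establish the lemma.

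On the reduction: in $G_{s,p}(\underline a) = E_{s,p}(p\underline a)/E_{s-1,p}(\underline a)$, the expression $p\underline a$ denotes the $p$-fold Witt sum $\underline a + \cdots + \underline a$, not the coordinatewise vector $(pa_0,\dots,pa_{s-1})$. Identity \eqref{eqnEscommutative} says $E_{s,p}$ is a homomorphism $(W_s,+)\to\mathbb G_m$ for Witt addition (one checks this at the ghost level, since $\log E_{s,p}(\underline a)=\sum_m (\zeta_{p^{s-m}}-1)p^{-m}w_m(\underline a)$), so $E_{s,p}(p\underline a)=E_{s,p}(\underline a)^p$, and the correct univariate factors are $E_{j,p}(t)^p/E_{j-1,p}(t)$, not $E_{j,p}(pt)/E_{j-1,p}(t)$. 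These coincide only for $j=1$, where $\exp(\lambda t)^p=\exp(p\lambda t)$; already for $j=2$ the coefficient of $t^p$ in the log is $\lambda$ in the former and $\lambda p^{p-1}$ in the latter, and in general your $c_i$ differ from the true coefficients $p^{-i}\bigl(p(\zeta_{p^{j-i}}-1)-(\zeta_{p^{j-i-1}}-1)\bigr)$ by a factor of $p^{p^i-1}$. You can cross-check against the first displayed expression for $G_{s,p}(\underline a)$ right after the lemma, whose $j$-th exponent is $\sum_i\bigl(p(\zeta_{p^{s-j-i}}-1)-(\zeta_{p^{s-j-1-i}}-1)\bigr)a_j^{p^i}/p^i$, or against the step in Proposition~\ref{propcomputeX} where a $p^{s-1}$-th root of $E_{s,p}(p^s\underline X')$ is declared to be $E_{s,p}(p\underline X')$, which is only valid under the Witt interpretation.

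On the integrality: you explicitly defer showing $\exp\bigl(\sum_i c_i(\zeta_{p^j}-1)^{-p^i}u^{p^i}\bigr)\in\mathbb Z_{(p)}[\zeta_{p^j}]\llbracket u\rrbracket$ to a Dwork-type criterion over the totally ramified ring $\mathbb Z_{(p)}[\zeta_{p^j}]$ that you say you ``plan to extract.'' You are correct that termwise integrality of the log coefficients neither holds nor would suffice, so such a criterion is genuinely needed; but that criterion \emph{is} the lemma, and until it is stated and proved there is no argument. The paper instead points to its third displayed identity, which rewrites each univariate factor as $E_{s-j,p}\bigl((p-1-[\zeta_{p^{s-j}}]-\cdots-[\zeta_{p^{s-j}}^{p-1}])[a_j]\bigr)$ using Witt arithmetic and Teichm\"uller lifts, so that the required integrality is read off from the already-established integrality of $E_{m,p}$ evaluated on Witt vectors manufactured from $p-1-[\zeta]-\cdots-[\zeta^{p-1}]$, rather than reproved from scratch. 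Your expansion-plus-Dwork route is a legitimately different strategy, but with the wrong factorization feeding into it and the decisive integrality lemma left as a placeholder, it does not yet constitute a proof.
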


\begin{proof}
Using the ghost-component form of $E_{m,p}$ and the definition
$G_{s,p}(\underline{a})=E_{s,p}(p\underline{a})E_{s-1,p}(\underline{a})^{-1}$,
we obtain
\begin{align*}
G_{s,p}(\underline{a})
&=\prod_{j=0}^{s-1}\exp\left(\sum_{i=0}^{s-j-1}
\Big(p(\zeta_{p^{s-j-i}}-1)-(\zeta_{p^{s-j-1-i}}-1)\Big)\frac{a_j^{p^i}}{p^i}\right)\\
&=\prod_{j=0}^{s-1}\exp\left(\sum_{i=0}^{s-j-1}
(\zeta_{p^{s-j-i}}-1)\Big(p-(1+\zeta_{p^{s-j-i}}+\cdots+\zeta_{p^{s-j-i}}^{p-1})\Big)
\frac{a_j^{p^i}}{p^i}\right)\\
&=\prod_{j=0}^{s-1}
E_{s-j,p}\left(\Big(p-1-[\zeta_{p^{s-j}}]-[\zeta_{p^{s-j}}]^2-\cdots-[\zeta_{p^{s-j}}]^{p-1}\Big)[a_j]\right),
\end{align*}
where $[\,\cdot\,]$ denotes the Teichmüller lift (in the Witt vectors of the appropriate length). Reducing modulo the ideal $(\zeta_{p^{s-j}}-1)$ gives
$[\zeta_{p^{s-j}}]\equiv1$, hence
\[
p-1-[\zeta_{p^{s-j}}]-\cdots-[\zeta_{p^{s-j}}]^{p-1}\equiv p-p=0 .
\]
By the standard identity
\[
\ker\big(W_{s-j}(A)\to W_{s-j}(A/I)\big)=W_{s-j}(I),
\]
valid for any ring $A$ and ideal $I\subset A$,
$\ker\big(W_{s-j}(A)\to W_{s-j}(A/I)\big)=W_{s-j}(I)$, applied with
$A=\mathbb{Z}_{(p)}[\zeta_{p^{s-j}}]$ and $I=(\zeta_{p^{s-j}}-1)$, it follows that
\[
p-1-[\zeta_{p^{s-j}}]-\cdots-[\zeta_{p^{s-j}}]^{p-1}
\in W_{s-j}\big((\zeta_{p^{s-j}}-1)\mathbb{Z}_{(p)}[\zeta_{p^{s-j}}]\big).
\]
Therefore the Witt vector
\(
\Big(p-1-[\zeta_{p^{s-j}}]-\cdots-[\zeta_{p^{s-j}}]^{p-1}\Big)[a_j]
\)
lies in the same Witt ideal, and the corresponding Artin–Hasse factor
$E_{s-j,p}(\cdot)$ is a power series in the single variable
$(\zeta_{p^{s-j}}-1)a_j$ with coefficients in
$\mathbb{Z}_{(p)}[\zeta_{p^s}]$. Taking the product over $j=0,\dots,s-1$
gives the stated inclusion.
\end{proof}

\begin{remark}
    The polynomials $E_1, \ldots, E_s$ (resp. $G_1, \ldots, G_s$) in Theorem~\ref{theoremuniversalKASW} are approximations of 
    $E_{1,p}, \ldots, E_{s,p}$ (resp. $G_{1,p}, \ldots, G_{s,p}$).
\end{remark}

\subsubsection{Matsuda's Kummer--Artin--Schreier--Witt theory}
\label{secMatsudatheory}
Let $K$ be a discrete valuation field containing $\zeta_{p^s}$, with residue field $k$. The \emph{Robba ring} $\mathcal{R}$ over $K$ is defined as the ring of Laurent series that converge in some annulus $\{z \mid \rho < \lvert z \rvert < 1\}$ for some $0 < \rho < 1$. Let $\mathcal{R}^{\rm bd}$ be the subring of $\mathcal{R}$ consisting of formal sums with bounded coefficients. The ring $\mathcal{R}^{\rm bd}$ carries a Gauss norm:
\[
    \left\lvert \sum_{i \in \mathbb{Z}} a_i z^i \right\rvert = \max_i \{ \lvert a_i \rvert \}.
\]
Let $\mathcal{R}^{\integral}$ be the subring of $\mathcal{R}^{\rm bd}$ consisting of elements with Gauss norm at most $1$.

\begin{lemma}[{\cite[Proposition 3.2]{MR1324636}}]
\label{lemmaRobba}
    The ring $\mathcal{R}^{\integral}$ is a Henselian discrete valuation ring. Consequently, $\mathcal{R}^{\rm bd}$ is a Henselian local field with residue field $\kappa := k \llparen z \rrparen$.
\end{lemma}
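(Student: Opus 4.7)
The plan is to establish four properties of $\mathcal{R}^{\integral}$ in sequence: (i) it is a domain, (ii) it is local with principal maximal ideal $\pi\mathcal{R}^{\integral}$, (iii) its residue field is $k\llparen z \rrparen$, and (iv) it is Henselian. The assertion for $\mathcal{R}^{\rm bd}$ then follows because $\mathcal{R}^{\rm bd} = \Frac(\mathcal{R}^{\integral})$ under the discrete Gauss valuation, so henselian-ness transfers from the ring of integers to the fraction field.

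For (i), multiplicativity of the Gauss norm on Laurent series gives that a product of nonzero elements is nonzero. For (ii), given $a = \sum a_i z^i \in \mathcal{R}^{\integral}$ with $|a|=1$, the overconvergence condition $|a_i|\rho^i \to 0$ as $i \to -\infty$ for some $\rho<1$ forces $|a_i|\to 0$ as $i\to-\infty$, so only finitely many $a_i$ with $i < 0$ are units of $R$. Hence $\bar a \in k\llparen z \rrparen$ is a nonzero, and therefore invertible, element; a coefficient-by-coefficient lift of $\bar a^{-1}$ produces $b \in \mathcal{R}^{\integral}$ with $|b|=1$ and $ab = 1 + \pi c$ for some $c \in \mathcal{R}^{\integral}$. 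One then shows $(1+\pi c)^{-1} \in \mathcal{R}^{\integral}$ using the formal geometric series $\sum_{n \geq 0}(-\pi c)^n$: coefficient-wise it converges in $R$ (each coefficient lies in an eventually constant $\pi$-adic Cauchy sequence), and on a subannulus of the convergence locus of $c$ sufficiently close to $|z|=1$ one has $|c(z)| \leq 1$, so $1 + \pi c$ is invertible there as an analytic function whose Laurent expansion coincides with the formal sum. Fact (iii) then follows from the same overconvergence observation: reduction modulo $\pi$ eliminates all but finitely many negative terms, landing in $k\llparen z \rrparen$, and conversely any element of $k\llparen z \rrparen$ lifts term-by-term. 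Discreteness of the valuation is immediate, since the Gauss norm takes values in $|\pi|^{\mathbb{Z}} \cup \{0\}$.

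The most delicate step is (iv). Given a monic $P \in \mathcal{R}^{\integral}[T]$ and a simple root $\bar\alpha$ of $\bar P$ in $k\llparen z \rrparen$, lift $\bar\alpha$ to $\alpha_0 \in \mathcal{R}^{\integral}$; simplicity of $\bar\alpha$ forces $|P'(\alpha_0)|=1$, so $P'(\alpha_0)^{-1}$ exists in $\mathcal{R}^{\integral}$ by (ii). Run the \emph{modified} Newton iteration $\alpha_{n+1} := \alpha_n - P(\alpha_n)/P'(\alpha_0)$ with fixed denominator, which gives the standard estimate $|\alpha_{n+1} - \alpha_n| \leq |\pi|^{n+1}$. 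Coefficient-wise, the $\alpha_n$ converge to some $\alpha$ with coefficients in $R$ and Gauss norm at most $1$. The main obstacle, and the only genuine departure from the classical complete-DVR argument, is that $\mathcal{R}^{\integral}$ is not $\pi$-adically complete, so coefficient-wise convergence alone does not place the limit in $\mathcal{R}^{\integral}$. To overcome this, observe that every $\alpha_n$ is a polynomial expression in $\alpha_0$, the coefficients of $P$, and the single fixed element $P'(\alpha_0)^{-1}$, all of which are analytic on a common annulus $\rho_* < |z| < 1$; hence every $\alpha_n$ is analytic there. On a subannulus close enough to $|z|=1$ the ultrametric bound $|\alpha_{n+1}(z) - \alpha_n(z)| \leq |\pi|^{n+1}$ yields uniform convergence to an analytic function, placing the limit $\alpha$ in $\mathcal{R}^{\integral}$ with $P(\alpha)=0$. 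Using modified rather than full Newton iteration is essential: it removes the need to track how the annulus of analyticity of the successive inverses $P'(\alpha_n)^{-1}$ might shrink with $n$, which would obstruct the overconvergence of the limit.
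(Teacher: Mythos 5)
The paper does not actually prove this lemma; it is quoted from Matsuda \cite[Proposition~3.2]{MR1324636}, so there is no internal proof to compare against and your argument must stand on its own. Your overall structure is right, and the key idea in step~(iv) --- use the \emph{modified} Newton iteration with fixed multiplier $u:=P'(\alpha_0)^{-1}$ so that every $\alpha_n$ lies in the finitely generated $R$-subalgebra generated by $\alpha_0$, the coefficients of $P$, and $u$, and hence is analytic on a single annulus $\rho_*<|z|<1$ --- is exactly the right device for handling the failure of $\pi$-adic completeness of $\mathcal{R}^{\integral}$. But there is a genuine gap at the crucial step: the Gauss-norm estimate $|\alpha_{n+1}-\alpha_n|\le|\pi|^{n+1}$ does \emph{not} imply the pointwise bound $|\alpha_{n+1}(z)-\alpha_n(z)|\le|\pi|^{n+1}$ on any fixed subannulus. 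The Gauss norm controls $\sup_i|a_i|$, not $\sup_i|a_i|r^i$ for $r<1$; the element $\pi^{n+1}z^{-m}$ has Gauss norm $|\pi|^{n+1}$ but sup norm $|\pi|^{n+1}r^{-m}$ on $|z|=r$, which is arbitrarily large for $m$ large, and nothing in your argument bounds the orders of the principal parts of $\alpha_{n+1}-\alpha_n$ as $n$ grows. The repair keeps your idea but avoids the Gauss-norm detour: run the iteration \emph{pointwise}. Pick $r<1$ close enough to $1$ that $|P(\alpha_0)|_r<1$ (possible since $|f|_r\to|f|_{\mathrm{Gauss}}$ as $r\to 1^-$ for $f\in\mathcal{R}^{\integral}$). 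For each point $z$ on $|z|=r$ one has $u(z)P'_z(\alpha_0(z))=1$ exactly, and the classical ultrametric estimate for the modified Newton map applied to the single-variable polynomial $P_z$ over the complete field at $z$ gives a geometric contraction rate bounded uniformly in $z$; this yields uniform convergence of the analytic functions $\alpha_n$ on the subannulus and hence an analytic limit $\alpha$ with $P(\alpha)=0$.

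Two smaller slips. In step~(ii), the assertion $|c(z)|\le 1$ near $|z|=1$ is false in general (take $c=z^{-1}$); what you need and do get is $|\pi c(z)|<1$, from $\limsup_{|z|\to 1^-}|c(z)|\le|c|_{\mathrm{Gauss}}\le 1<|\pi|^{-1}$. And the parenthetical ``each coefficient lies in an eventually constant $\pi$-adic Cauchy sequence'' is incorrect: the coefficient sequences are Cauchy but not eventually constant, so coefficientwise convergence a priori lands only in the $\pi$-adic completion of $R$. It is the uniform-on-a-subannulus argument (which presumes $R$ complete --- the usual hypothesis when defining the Robba ring) that actually places the limit in $\mathcal{R}^{\integral}$, both here and in the Newton step.
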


Therefore, there is a canonical restriction map 
\begin{equation}
\label{eqnrestrictioncharacters}
    \cohom^1(\kappa, \mathbb{Z}/p^s) \longrightarrow \cohom^1(\mathcal{R}^{\rm bd}, \mathbb{Z}/p^s).
\end{equation}

\begin{theorem}[{\cite[Theorem 3.8]{MR1324636}}]
\label{theoremMatsudamorphism}
The homomorphism in \eqref{eqnrestrictioncharacters} is induced by a homomorphism 
\begin{equation}
\label{eqnMatsuda}
\begin{aligned}
W_s(\mathcal{R}^{\integral}) & \xlongrightarrow[]{\Psi_s} (\mathcal{R}^{\integral})^{\times}, \\
\underline{a} & \mapsto E_{s,p}(p^s \underline{a}).
\end{aligned}
\end{equation}
\end{theorem}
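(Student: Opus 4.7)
The plan is to split the statement into three sub-claims: (i) $\Psi_s$ lands in $(\mathcal{R}^{\integral})^{\times}$; (ii) $\Psi_s$ is a group homomorphism; and (iii) under the Artin-Schreier-Witt identification $W_s(\kappa)/\wp \cong \cohom^1(\kappa, \mathbb{Z}/p^s)$ and the Kummer identification $(\mathcal{R}^{\rm bd})^{\times}/((\mathcal{R}^{\rm bd})^{\times})^{p^s} \cong \cohom^1(\mathcal{R}^{\rm bd}, \mathbb{Z}/p^s)$, the map induced by $\Psi_s$ on cohomology coincides with the restriction morphism \eqref{eqnrestrictioncharacters}.

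Parts (i) and (ii) are largely formal. For (i), each factor $E_{s-i,p}(p^s a_i) = \exp\!\bigl(\sum_{j \ge 0}(\zeta_{p^{s-i-j}}-1)(p^s a_i)^{p^j}/p^j\bigr)$ satisfies: every summand inside the outer $\exp$ has strictly positive Gauss valuation, since $\nu(\zeta_{p^{s-i-j}}-1) > 0$ and $sp^j - j \ge 1$; this both guarantees convergence (the valuation exceeds the $\exp$ radius $1/(p-1)$) and forces $\Psi_s(\underline{a}) \in 1 + \mathfrak{m}_{\mathcal{R}^{\integral}}$, which is a unit. Part (ii) follows from \eqref{eqnEscommutative} together with the additivity of multiplication by $p^s$ on $W_s$, giving $E_{s,p}(p^s(\underline{a}+\underline{b})) = E_{s,p}(p^s\underline{a}) \cdot E_{s,p}(p^s\underline{b})$.

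For (iii), given $\overline{\underline{a}} \in W_s(\kappa)$, lift it to $\underline{a} \in W_s(\mathcal{R}^{\integral})$ and consider the Kummer extension $\mathcal{R}^{\rm bd}[T]/(T^{p^s} - \Psi_s(\underline{a}))$. I would proceed by induction on $s$ using the factorization $E_{s,p}(pt) = E_{s-1,p}(t)\cdot G_{s,p}(t)$ to peel off one Witt coordinate at a time. At the top layer, Lemma \ref{lemmaG} ensures $G_{s,p}(p^{s-1}\underline{a}) \in 1 + p\lambda \cdot \mathcal{R}^{\integral}$, so a $p$-th root takes the form $T_{s-1} = 1 + \lambda Y_{s-1}$. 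Expanding $(1+\lambda Y_{s-1})^p$ and reducing modulo $\pi$ yields an Artin-Schreier relation $Y_{s-1}^p - Y_{s-1} \equiv a_{s-1} + f_s(\overline{Y}_0, \dots, \overline{Y}_{s-2}) \pmod{\pi}$, matching the top layer of \eqref{eqnsystemAS}; the inductive hypothesis applied to $E_{s-1,p}(p^{s-1}(a_0,\dots,a_{s-2}))$ handles the lower layers. This simultaneously verifies that lifts of $\wp W_s(\kappa)$ land in $((\mathcal{R}^{\rm bd})^{\times})^{p^s}$ and that the induced cohomology classes match under restriction.

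The hard part will be the combinatorial verification in this induction: the polynomials $f_i$ produced by reducing Kummer roots of $E_{s,p}(p^s \underline{a})$ must coincide with the universal Witt polynomials of \eqref{eqnsystemAS}. This is a delicate computation in $\mathbb{Z}_{(p)}[\zeta_{p^s}]\llbracket t \rrbracket$, ultimately resting on the Dwork-Dieudonn\'e $p$-integrality of $\sum_i t^{p^i}/p^i$ -- the same property that makes the Artin-Hasse exponential integral and that is precisely tailored to align with Witt-vector arithmetic. Once this matching is established, the uniqueness of the Artin-Schreier-Witt description of $\cohom^1(\kappa, \mathbb{Z}/p^s)$ forces agreement with the restriction map.
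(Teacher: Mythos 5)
This theorem is cited verbatim from Matsuda's paper \cite[Theorem 3.8]{MR1324636}; the present manuscript does not prove it, so there is no in-paper proof to compare your attempt against. That said, two issues with your proposal deserve attention.

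First, in part (i) you expand $E_{s,p}(p^s\underline{a})$ as the product of factors $E_{s-i,p}(p^s a_i)$, implicitly reading $p^s\underline{a}$ as the coordinate-wise rescaling $(p^s a_0,\dots,p^s a_{s-1})$. But $p^s\underline{a}$ denotes multiplication by the integer $p^s$ in the Witt \emph{ring} $W_s(\mathcal{R}^{\integral})$, which is governed by the ghost components and is not coordinate-wise: already $b_1 = p^s a_1 + p^{s-1}(1-p^{s(p-1)})a_0^p$ when $\underline{b}=p^s\underline{a}$. The paper itself uses the correct convention --- see the proof of Proposition \ref{propcomputeX}, where e.g.\ $p(0,\dots,0,X_{s-1}')$ is simplified via the Verschiebung decomposition and Proposition \ref{propsimplifywitt}. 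Your convergence estimate still lands because $p^{s-i}$ divides the $i$-th coordinate of $p^s\underline{a}$ and the extra $\nu(\zeta_{p^{s-i-j}}-1)>0$ pushes the valuation past $1/(p-1)$, but the coordinate formulas you write are not what the notation means.

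Second and more substantially, part (iii) is explicitly deferred: you write that the hard part is the combinatorial verification that the polynomials $f_i$ produced by reducing Kummer roots of $E_{s,p}(p^s\underline{a})$ modulo $\pi$ match the Witt polynomials of \eqref{eqnsystemAS}, and then appeal to ``Dwork--Dieudonn\'e $p$-integrality'' without carrying it out. That verification \emph{is} the theorem; the framing (peeling coordinates via $E_{s,p}(pt)=E_{s-1,p}(t)\,G_{s,p}(t)$, then substituting $T_{s-1}=1+\lambda Y_{s-1}$) is a reasonable plan and in fact closely parallels the machinery the paper develops in Sections \ref{secconstructW}--\ref{secintclosure} \emph{using} Matsuda's theorem as an input, but as written your proposal is a proof outline, not a proof.
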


The homomorphism \eqref{eqnMatsuda} establishes a link between Kummer and Artin--Schreier--Witt theories by associating to a Witt vector $\underline{a}$ the Kummer class $E_{s,p}(p^s \underline{a})$. Our strategy in this paper is to define the cover using Kummer theory while using an associated Witt vector to prove that the cover satisfies the desired conditions.

\section{Constructing the Kummer extension}
\label{secconstructW}

\subsection{The Witt Vector Associated to a Kummer Extension}
\label{secassociatedWitt}
Recall that any $\mathbb{Z}/p^s$-cover of 
$\mathbb{P}^s_R = \operatorname{Proj} R[X_0, \ldots, X_{s-1}, Y]$ can be described by a Kummer equation of the form
\begin{equation}
    \label{eqnKummerps}
    Z_{s-1}^{p^s} = \prod_{i=0}^{s-1} (u_i)^{p^i} =: u \in K(X_0, \ldots, X_{s-1}),
\end{equation}
where $K$ is the fraction field of $R$ and $k$ is the residue field of $K$. 

\begin{proposition}
\label{propfieldlaurentresidue}
There exists a Henselian local field $\mathbb{K}$ with residue field $k((x_0, \ldots, x_{s-1}))$.
\end{proposition}

\begin{proof}
We proceed by induction on $s \geq 1$. Here $k((x_0, \ldots, x_{s-1}))$ denotes the iterated Laurent series field
$k((x_0))\cdots((x_{s-1}))$.

For $s=1$, Lemma~\ref{lemmaRobba} applied with base field $K$ and Robba variable $X_0$ shows that
$\mathcal{R}^{\mathrm{int}}_{K}(X_0)$ is a Henselian DVR with residue field $k((x_0))$. Its fraction field
\[
    \mathbb{K}_1 := \mathcal{R}^{\mathrm{bd}}_{K}(X_0)
\]
is therefore a Henselian local field with residue $k((x_0))$.

Assume the statement holds for $s-1$, so there exists a Henselian local field $\mathbb{K}_{s-1}$ with residue field $k((x_0, \ldots, x_{s-2}))$. Applying Lemma~\ref{lemmaRobba} again, now with base field $\mathbb{K}_{s-1}$ and a new Robba variable $X_{s-1}$, we find that
$\mathcal{R}^{\mathrm{int}}_{\mathbb{K}_{s-1}}(X_{s-1})$ is a Henselian DVR whose residue field is
\[
    \mathrm{res}\!\left(\mathcal{R}^{\mathrm{int}}_{\mathbb{K}_{s-1}}(X_{s-1})\right)
    = \mathrm{res}(\mathbb{K}_{s-1})((x_{s-1}))
    = k((x_0, \ldots, x_{s-2}))((x_{s-1}))
    = k((x_0, \ldots, x_{s-1})).
\]
Its fraction field
\[
    \mathbb{K}_s := \mathcal{R}^{\mathrm{bd}}_{\mathbb{K}_{s-1}}(X_{s-1})
\]
is then a Henselian local field with this residue.

By induction, the claim follows for all $s \geq 1$, with $\mathbb{K} := \mathbb{K}_s$.
\end{proof}

For the remainder of this section, we set $\mathcal{R}^{\mathrm{bd}} = \mathbb{K}$ as in Proposition~\ref{propfieldlaurentresidue}. Its associated Henselian DVR is $\mathcal{R}^{\mathrm{int}} = \mathcal{R}^{\mathrm{int}}_{\mathbb{K}_{s-1}}(X_{s-1})$, as in the proof of the same proposition, and its residue field is $k((x_0, \ldots, x_{s-1}))$.

\begin{lemma}
\label{lem:embedding-formal-series-into-Rint}
With the notation of Proposition~\ref{propfieldlaurentresidue}, there is a canonical injective ring homomorphism
\[
R\llbracket X_0,\ldots,X_{s-1}\rrbracket \hookrightarrow \mathcal{R}^{\mathrm{int}}_{\mathbb{K}_{s-1}}(X_{s-1})=\mathcal{R}^{\mathrm{int}}.
\]
In particular, any series in $R\llbracket X_0,\ldots,X_{s-1}\rrbracket$ may be viewed as an element of the Henselian DVR
$\mathcal{R}^{\mathrm{int}}$.
\end{lemma}

\begin{proof}
We argue by induction on $s$.

If $s=1$, then $R\llbracket X_0\rrbracket\subset \mathcal{R}^{\mathrm{int}}_{K}(X_0)$ because any series
$f(X_0)=\sum_{n\ge 0} a_n X_0^n$ with $a_n\in R$ has Gauss norm
$\max_n |a_n|\le 1$, hence lies in the integral Robba ring.
Injectivity follows because the Robba variable $X_0$ is transcendental over $K$.

Assume $s>1$ and that we already have a canonical embedding
$R\llbracket X_0,\ldots,X_{s-2}\rrbracket\hookrightarrow \mathcal{R}^{\mathrm{int}}_{\mathbb{K}_{s-2}}(X_{s-2})
=\mathcal{O}_{\mathbb{K}_{s-1}}$.
Now take
$f\in R\llbracket X_0,\ldots,X_{s-1}\rrbracket$ and write it uniquely as a power series in $X_{s-1}$,
\[
f=\sum_{n\ge 0} a_n\,X_{s-1}^n,
\qquad a_n\in R\llbracket X_0,\ldots,X_{s-2}\rrbracket.
\]
Via the inductive embedding, each coefficient $a_n$ defines an element of $\mathcal{O}_{\mathbb{K}_{s-1}}$.
Hence the same formula defines an element of $\mathcal{O}_{\mathbb{K}_{s-1}}\llbracket X_{s-1}\rrbracket$,
which is contained in $\mathcal{R}^{\mathrm{int}}_{\mathbb{K}_{s-1}}(X_{s-1})$ by the same Gauss-norm argument as for $s=1$.
This gives the desired map; injectivity again follows from the transcendence of $X_{s-1}$ over $\mathbb{K}_{s-1}$.
\end{proof}

To construct the isogeny $\mathcal{W}_s \rightarrow \mathcal{V}_s$ in \eqref{eqnKASW}, which is generically an endomorphism of $\mathbb{G}_m$, we consider a $\mathbb{Z}/p^s$-cover of $\mathbb{P}^s_R$ defined by a Kummer equation of the form \eqref{eqnKummerps}. We require that this cover has good reduction and that its special fiber is a $\mathbb{Z}/p^s$-cover of $\mathbb{P}^s_k$ associated with the Witt vector $(x_0, \ldots, x_{s-1})$. Assume that the cover has separable reduction, and let $\underline{X}' = (X_0', \ldots, X_{s-1}') \in W_s(\mathcal{R}^{\mathrm{int}})$ be a Witt vector such that $\Psi_s(\underline{X}') = u$, where $\Psi_s$ is Matsuda's morphism \eqref{eqnMatsuda}. Our task is to prove the existence of such a $u$ for which, for each $i = 0, 1, \ldots, s-1$, the component $X_i'$ is congruent to $x_i$ modulo the maximal ideal of $R$.

The proof of the following result is straightforward from the definition of $E_{i,p}$.

\begin{proposition}
\label{propsimplifywitt}
Suppose that $m < s$. Then
    \begin{equation*}
        E_{s,p}((0, \ldots, 0, X_0', \ldots, X_{m-1}')) = E_{m,p}((X_0', \ldots, X_{m-1}')).
    \end{equation*}
\end{proposition}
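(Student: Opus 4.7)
The plan is to unwind the definition of $E_{s,p}$ on a Witt-vector argument. Recall that for any $\underline{a} = (a_0, \ldots, a_{s-1}) \in W_s$, one has $E_{s,p}(\underline{a}) = \prod_{i=0}^{s-1} E_{s-i,p}(a_i)$, where each single-variable factor is the exponential $E_{k,p}(t) = \exp\bigl(\sum_{j=0}^{k-1} (\zeta_{p^{k-j}} - 1) \tfrac{t^{p^j}}{p^j}\bigr)$. The very first observation I would record is that $E_{k,p}(0) = \exp(0) = 1$ for every $k \ge 1$, since every summand inside the exponential carries a positive power of the argument.

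With this in hand, applying the product formula to the Witt vector whose leading entries are zero collapses those factors to $1$, leaving only the trailing factors corresponding to the nonzero slots filled by $X'_0, X'_1, \ldots$. A clean index shift, sending the position $i$ in the longer vector to its relative position $j$ within the trailing block, then rewrites the surviving product in exactly the defining form of the right-hand side, as a product of the form $\prod_{j} E_{(s-m)-j, p}(X'_j)$.

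The only real obstacle is bookkeeping of the subscripts: one needs to verify that the exponent index $s - i$ on each surviving factor becomes, under the shift, precisely the exponent index demanded by the right-hand side. This is a one-line substitution. No auxiliary lemma, congruence, or analytic estimate is required; as the proposition already notes, the equality is a purely formal consequence of the definition, so the write-up should amount to at most a few lines.
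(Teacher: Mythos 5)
Your approach is the right one and is essentially what the paper intends — the paper itself gives no argument, only the remark that the claim is ``straightforward from the definition'' of $E_{i,p}$. Your two ingredients, $E_{k,p}(0) = \exp(0) = 1$ and the factorization $E_{s,p}(\underline{a}) = \prod_{i=0}^{s-1} E_{s-i,p}(a_i)$, are exactly what is needed.

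However, the step you wave away as ``a one-line substitution'' is precisely where the content is, and you did not actually carry it out. Take the statement at face value: the vector $(0,\ldots,0,X_0',\ldots,X_{m-1}')$ has $m$ nonzero entries in slots $s-m,\ldots,s-1$. The factor attached to slot $i=(s-m)+j$ is $E_{s-i,p} = E_{s-(s-m+j),p} = E_{m-j,p}$, so the surviving product is $\prod_{j=0}^{m-1} E_{m-j,p}(X_j') = E_{m,p}\bigl((X_0',\ldots,X_{m-1}')\bigr)$, \emph{not} the $\prod_j E_{(s-m)-j,p}(X_j')$ you wrote. Your formula is what you get if instead there are $m$ leading zeros and $s-m$ trailing entries. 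These two readings disagree unless $s = 2m$. The paper's statement is in fact internally inconsistent on this point: the index labeling $X_0',\ldots,X_{m-1}'$ says ``$m$ nonzero entries'' while the subscript $s-m$ on the right-hand side says ``$s-m$ nonzero entries.'' Checking the two places where the proposition is actually invoked (e.g.\ $E_{s,p}\bigl(p(0,\ldots,0,X_{s-1}')\bigr) = E_{1,p}(pX_{s-1}')$ and $E_{i,p}\bigl((0,\tilde X_1,\ldots,\tilde X_{i-1})\bigr)=E_{i-1,p}\bigl((\tilde X_1,\ldots,\tilde X_{i-1})\bigr)$) shows the intended convention is that $m$ counts the leading zeros, so the upper index $m-1$ in the displayed formula is a typo for $s-m-1$. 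Your product $\prod_j E_{(s-m)-j,p}$ is then correct, but you arrived at it by luck rather than by the bookkeeping you promised: if you had actually done the one-line substitution against the statement as printed, you would have found $E_{m,p}$ and been forced to notice the discrepancy. A careful write-up should either correct the indices in the statement or derive $E_{m,p}$ and flag the mismatch explicitly.
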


\begin{proposition}
\label{propcomputeX}
Recall that $\lambda := \zeta_p - 1$. With the above setting, we can compute the entries $X_i'$ of an associated Witt vector inductively as follows:
\begin{equation*}
X_0' = \frac{1}{p \lambda} \ln(u_0), \quad 
X_1' = \frac{1}{p \lambda} \ln \left( \frac{u_1}{G_{2,p}((X_0', 0))} \right), \ldots, \quad 
X_{s-1}' = \frac{1}{p \lambda} \ln \left( \frac{u_{s-1}}{G_{s,p}((X_0', \ldots, X_{s-2}', 0))} \right),
\end{equation*}
provided the $\ln$ function is well-defined.
\end{proposition}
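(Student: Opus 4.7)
The plan is to proceed by induction on $s$, peeling off one Witt entry at a time by matching it against one layer of the associated Kummer tower. The guiding principle is that Matsuda's morphism $\Psi_s$ is a group homomorphism (Theorem~\ref{theoremMatsudamorphism}) which intertwines the multiplicative decomposition $u = \prod_{i=0}^{s-1} u_i^{p^i}$ with the Witt decomposition $\underline{X}' = [X_0'] + V([X_1']) + \ldots + V^{s-1}([X_{s-1}'])$, so that the $i$-th Witt entry determines the $i$-th Kummer layer.

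For the base case $s = 1$, the morphism specialises to $\Psi_1(X_0') = E_{1,p}(pX_0') = \exp(p\lambda X_0')$. Equating this with $u = u_0$ and applying the formal logarithm immediately yields $X_0' = \frac{1}{p\lambda}\ln u_0$.

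For the inductive step, I would use the additivity \eqref{eqnEscommutative} together with Proposition~\ref{propsimplifywitt} to factor $\Psi_s(\underline{X}')$ as a product of a term depending only on $X_0'$ and a term of the form $\Psi_{s-1}(X_1', \ldots, X_{s-1}')$. Rewriting the $X_0'$-factor via the defining identity $E_{s,p}(p\underline{a}) = G_{s,p}(\underline{a}) \cdot E_{s-1,p}(\underline{a})$, evaluated at $\underline{a} = (X_0', 0, \ldots, 0)$, produces exactly the correction $G_{s,p}((X_0', 0, \ldots, 0))$. Comparing with the $\mathbb{Z}/p$-subquotient of the Kummer character (cf.\ Sections~\ref{secasw} and \ref{seckummer}) forces $u_0 = \exp(p\lambda X_0')$, and the residual equation becomes a length-$(s-1)$ instance of the same problem, to which the inductive hypothesis applies and yields $X_1', \ldots, X_{s-1}'$.

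The main obstacle I anticipate is the bookkeeping of the $G$-factors: one must verify that the compound $G$-terms generated by iterated application of $E_{s,p}(p\underline{a}) = G_{s,p}(\underline{a}) E_{s-1,p}(\underline{a})$ collapse, via the multiplicativity \eqref{eqnGscommutative} and the truncation rule of Proposition~\ref{propsimplifywitt}, to precisely $G_{i+1,p}((X_0', \ldots, X_{i-1}', 0))$ at level $i$. A short valuation check also confirms that all formal logarithms involved converge inside $\mathcal{R}^{\integral}$.
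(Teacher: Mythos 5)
Your base case is fine, but your inductive step peels the Witt vector from the wrong end, and this introduces a genuine gap that you yourself flag but do not close. The paper peels off the \emph{last} entry: it assumes the length-$(s-1)$ solution $X_0',\ldots,X_{s-2}'$ (for the truncated Kummer datum $\prod_{i=0}^{s-2}u_i^{p^i}$) is already in hand, writes $(X_0',\ldots,X_{s-1}')=(X_0',\ldots,X_{s-2}',0)+(0,\ldots,0,X_{s-1}')$, and solves a \emph{single} exponential equation for $X_{s-1}'$. Because $\prod_{i=0}^{s-1}u_i^{p^i}=E_{s-1,p}\bigl(p^{s-1}(X_0',\ldots,X_{s-2}')\bigr)\,u_{s-1}^{p^{s-1}}$ and $E_{s,p}\bigl(p(0,\ldots,0,X_{s-1}')\bigr)=\exp(p\lambda X_{s-1}')$, the $G_{s,p}$-correction falls out in one line. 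There is no residual problem to set up and no bookkeeping across levels.

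You instead decompose $\underline{X}'=[X_0']+V\bigl((X_1',\ldots,X_{s-1}')\bigr)$, which gives $\Psi_s(\underline{X}')=\Psi_s([X_0'])\cdot\Psi_{s-1}\bigl((X_1',\ldots,X_{s-1}')\bigr)^p$. To make this a length-$(s-1)$ instance of the same problem you must (a) extract $u_0=\exp(p\lambda X_0')$ from $\Psi_s([X_0'])=E_{s,p}(X_0')^{p^s}$, which involves the \emph{full} series $E_{s,p}(t)=\exp\bigl(\sum_{i<s}(\zeta_{p^{s-i}}-1)t^{p^i}/p^i\bigr)$, not just $\exp(\lambda t)$; and (b) identify the modified Kummer factors $v_0,\ldots,v_{s-2}$ of the residual problem and check that, after applying the inductive hypothesis to them, the resulting expressions for $X_1',\ldots,X_{s-1}'$ coincide with $u_i/G_{i+1,p}((X_0',\ldots,X_{i-1}',0))$. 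You acknowledge this as the ``main obstacle'' but leave it unverified. It is not a routine collapse: $v_0$ would have to equal $u_1/G_{2,p}((X_0',0))$, $v_1$ would have to absorb a ratio of two different $G$-series, and so on, with $p$-th-root ambiguities at each stage. Your intermediate claim that evaluating $E_{s,p}(p\underline{a})=G_{s,p}(\underline{a})E_{s-1,p}(\underline{a})$ at $\underline{a}=(X_0',0,\ldots,0)$ ``produces exactly the correction $G_{s,p}((X_0',0,\ldots,0))$'' also does not match the target formula, which has $G_{i+1,p}((X_0',\ldots,X_{i-1}',0))$ depending on \emph{all} earlier entries. The paper's right-end decomposition avoids every one of these issues; the left-end decomposition you chose does not, and without carrying out the bookkeeping the argument is incomplete.
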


\begin{proof}
We proceed by induction on $s$. Suppose first that $s=1$. Then it is necessary that $\Psi_1(X_0') = \exp(p\lambda X_0') = u_0$. Therefore, $X_0' = \frac{1}{p\lambda} \ln(u_0)$, as expected.

Assume the induction hypothesis for $s-1$. That is, one can find $X_0', \ldots, X_{s-2}'$ such that
\[
E_{s-1,p}(p^{s-1}(X_0', \ldots, X_{s-2}')) = \prod_{i=0}^{s-2} (u_i)^{p^i}.
\]
We want to find $X_{s-1}'$ such that
\begin{equation*}
\begin{split}
    \Psi_s((X_0', \ldots, X_{s-1}')) = E_{s,p}(p^s(X_0', \ldots, X_{s-1}')) & = \prod_{i=0}^{s-1} (u_i)^{p^i} \\
    &= E_{s-1,p}(p^{s-1}(X_0', \ldots, X_{s-2}')) (u_{s-1})^{p^{s-1}}.
\end{split}
\end{equation*}
By \eqref{eqnespghost}, for any Witt vector $\underline{a}$ of length $m$ and any $N \ge 0$, we have
\[
E_{m,p}(p^N \underline{a}) = E_{m,p}(\underline{a})^{p^N}.
\]
Hence
\[
E_{s,p}(p^s\underline{X}') = E_{s,p}(p\underline{X}')^{p^{s-1}}
\quad\text{and}\quad
E_{s-1,p}(p^{s-1}\underline{X}'_{<s-1}) = E_{s-1,p}(\underline{X}'_{<s-1})^{p^{s-1}}.
\]
Therefore, to produce a solution at level $p^s$, it suffices to solve
\begin{equation*}
\begin{split}
    E_{s,p}(p(X_0', \ldots, X_{s-1}')) & = E_{s-1,p}(X_0', \ldots, X_{s-2}') u_{s-1}, \\
    E_{s,p}(p(X_0', \ldots, X_{s-2}', 0)) E_{s,p}(p(0, \ldots, 0, X_{s-1}')) & = E_{s-1,p}(X_0', \ldots, X_{s-2}') u_{s-1}.
\end{split}
\end{equation*}
The second equality follows from the decomposition
\[
(X_0', \ldots, X_{s-1}') = (X_0', \ldots, X_{s-2}', 0) + (0, \ldots, 0, X_{s-1}')
\]
and the commutativity of $E_{s,p}$ in \eqref{eqnEscommutative}. By Proposition~\ref{propsimplifywitt}, we have
\[
E_{s,p}(p(0, \ldots, 0, X_{s-1}')) = E_{1,p}(p(X_{s-1}')).
\]
Therefore,
\begin{equation*}
    \exp(p\lambda X_{s-1}') = u_{s-1} \frac{E_{s-1,p}((X_0', \ldots, X_{s-2}'))}{E_{s,p}(p(X_0', \ldots, X_{s-2}', 0))} = \frac{u_{s-1}}{G_{s,p}((X_0', \ldots, X_{s-2}', 0))}.
\end{equation*}
Thus,
\begin{equation*}
    X_{s-1}' = \frac{1}{p\lambda} \ln \left( \frac{u_{s-1}}{G_{s,p}((X_0', \ldots, X_{s-2}', 0))} \right),
\end{equation*}
which completes the induction.
\end{proof}

\subsection{Constructing the Kummer Extension}

Assume further that
\begin{equation}
\label{eqnui}
    u_i := G_{i+1}(X_0, \ldots, X_{i-1}) + p\lambda X_i,
\end{equation}
for some $G_{i+1}(X_0, \ldots, X_{i-1}) \in \mathbb{Z}_{(p)}[\zeta_{p^{i+1}}][X_0, \ldots, X_{i-1}]$. Set
\begin{equation*}
    G'_{i+1} := G_{i+1,p}((X_0', \ldots, X_{i-1}', 0)) - G_{i+1}(X_0, \ldots, X_{i-1}),
\end{equation*}
where $X_0', \ldots, X_{i-1}'$ are derived from $u_0, \ldots, u_{i-1}$ and Proposition~\ref{propcomputeX}.

\begin{proposition}
\label{propcomputeWittvectorF} 
In the above setting, suppose $\nu(G_{i+1}') > \frac{p}{p-1}$ for $i=0, \ldots, s-1$. Then $X_i' \equiv X_i \mod (\zeta_{p^{s}}-1)$ for all $i$.
\end{proposition}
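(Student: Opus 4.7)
The plan is to prove the congruence by induction on $i$, using Proposition \ref{propcomputeX} to express each $X_i'$ as an explicit logarithm and then tracking valuations term by term. For the base case $i=0$, the equation $u_0 = 1 + p\lambda X_0$ (taking $G_1=1$ in \eqref{eqnui}) gives $X_0' = (p\lambda)^{-1}\ln(1+p\lambda X_0)$; since $\nu(p\lambda)=\tfrac{p}{p-1}$, every nonlinear term of the expansion contributes valuation at least $\tfrac{p}{p-1}$, which dominates $\nu(\zeta_p-1)=\tfrac{1}{p-1}$, so $X_0'\equiv X_0\pmod{\zeta_p-1}$.

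For the inductive step, assuming $X_j'\equiv X_j\pmod{\zeta_{p^{j+1}}-1}$ for all $j<i$, I would combine Proposition \ref{propcomputeX} with the definition of $G'_{i+1}$ to write
\[
X_i' \;=\; \frac{1}{p\lambda}\ln\!\Bigl(1 + \frac{p\lambda X_i - G'_{i+1}}{A}\Bigr), \qquad A \;:=\; G_{i+1,p}\bigl((X_0',\ldots,X_{i-1}',0)\bigr).
\]
The structural input is Lemma \ref{lemmaG}: the series $A$ lies in $\mathbb{Z}_{(p)}[\zeta_{p^{i+1}}]\llbracket(\zeta_{p^{i+1}}-1)X_0',\ldots,(\zeta_{p^2}-1)X_{i-1}'\rrbracket$ with constant term $1$. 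Since each $(\zeta_{p^{i+1-k}}-1)$ is divisible by $(\zeta_{p^{i+1}}-1)$ for $k\leq i$, this gives $A\equiv 1\pmod{\zeta_{p^{i+1}}-1}$ and shows $A$ is a unit. Combined with the hypothesis $\nu(G'_{i+1})>\tfrac{p}{p-1}=\nu(p\lambda)$, the argument $t := (p\lambda X_i - G'_{i+1})/A$ of the logarithm satisfies $\nu(t)\geq\tfrac{p}{p-1}$.

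Expanding $\ln(1+t) = t - t^2/2 + \cdots$ and dividing by $p\lambda$, the leading contribution decomposes as $X_i/A - G'_{i+1}/(p\lambda A)$; the first summand is congruent to $X_i$ modulo $\zeta_{p^{i+1}}-1$ by the previous paragraph, and the second has positive valuation $\nu(G'_{i+1})-\nu(p\lambda)$. The remaining terms $t^k/(k\,p\lambda)$ for $k\geq 2$ have valuation at least $(k-1)\tfrac{p}{p-1}-\nu_p(k)$, which comfortably exceeds $\nu(\zeta_{p^{i+1}}-1)=\tfrac{1}{p^i(p-1)}$. Assembling the estimates yields $X_i'\equiv X_i\pmod{\zeta_{p^{i+1}}-1}$ and closes the induction.

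The main obstacle I anticipate is the bookkeeping required to ensure that the strict inequality $\nu(G'_{i+1})>\nu(p\lambda)$ genuinely produces a gap of size at least $\nu(\zeta_{p^{i+1}}-1)$, especially for small $i$ where the target gap $\tfrac{1}{p^i(p-1)}$ is relatively large. This should follow from the discreteness of the valuation on $R$; if that is insufficient, one can sharpen the estimate by splitting $G'_{i+1}$ into a truncation error of the power series $G_{i+1,p}$ and a substitution error $G_{i+1,p}(\underline{X}',0)-G_{i+1,p}(\underline{X},0)$, the latter controlled directly by the inductive hypothesis $\nu(X_j'-X_j)\geq\nu(\zeta_{p^{j+1}}-1)$.
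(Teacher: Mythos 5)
Your proposal is correct and follows essentially the same route as the paper's proof: both use Proposition~\ref{propcomputeX} to express $X_i'$ as $\frac{1}{p\lambda}\ln(1+t)$ with $t=(p\lambda X_i - G'_{i+1})/G_{i+1,p}(\cdot)$, both invoke Lemma~\ref{lemmaG} to control the denominator, and both close by the hypothesis $\nu(G'_{i+1})>\tfrac{p}{p-1}$ together with the $\ln(1+t)$ expansion. The paper bundles the effect of dividing by $G_{i+1,p}$ into an $o(p^{p/(p-1)})$ error before expanding the logarithm, whereas you keep the factor $A=G_{i+1,p}(\cdot)$ explicit through the leading term $X_i/A - G'_{i+1}/(p\lambda A)$; the paper also leaves implicit the point you flag at the end, namely that strictly positive valuation of the error forces valuation $\geq \nu(\zeta_{p^{i+1}}-1)$ by discreteness of the value group of $\mathbb{Z}_{(p)}[\zeta_{p^{i+1}}]$. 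These are presentational differences, not different arguments.
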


\begin{proof}
We argue by induction on $i=0,\ldots,s-1$. We write $O(p^{a})$ (resp.~$o(p^{a})$) for an element of valuation
$\ge a$ (resp.~$> a$).

For $i=0$, we have $u_0=G_1+p\lambda X_0=1+p\lambda X_0$ and hence
\[
X_0' = \frac{1}{p\lambda}\ln(u_0)=\frac{1}{p\lambda}\ln(1+p\lambda X_0)=X_0+\frac{1}{p\lambda}\,o\bigl(p^{p/(p-1)}\bigr),
\]
so $\nu(X_0'-X_0)>0$. Since $R=\mathbb{Z}_{(p)}[\zeta_{p^s}]$ is a DVR and
$\nu(\zeta_{p^s}-1)=\frac{1}{p^{s-1}(p-1)}$ is its smallest positive valuation, this implies
$X_0'\equiv X_0\pmod{(\zeta_{p^s}-1)}$.

Assume now $1\le i\le s-1$ and that $X_j'\equiv X_j\pmod{(\zeta_{p^s}-1)}$ for all $j<i$.
By Proposition~\ref{propcomputeX} and the definition of $u_i$ in \eqref{eqnui}, we have
\allowdisplaybreaks
\begin{align*}
X_i'
&=\frac{1}{p\lambda}\ln\left(\frac{u_i}{G_{i+1,p}((X_0',\ldots,X_{i-1}',0))}\right)
=\frac{1}{p\lambda}\ln\left(\frac{G_{i+1}(X_0,\ldots,X_{i-1})+p\lambda X_i}{G_{i+1,p}((X_0',\ldots,X_{i-1}',0))}\right)\\
&=\frac{1}{p\lambda}\ln\left(1+\frac{-G_{i+1}'+p\lambda X_i}{G_{i+1,p}((X_0',\ldots,X_{i-1}',0))}\right).
\end{align*}
Set $D_i:=G_{i+1,p}((X_0',\ldots,X_{i-1}',0))$. By Lemma~\ref{lemmaG}, we have $\nu(D_i-1)\ge \nu(\zeta_{p^{i+1}}-1)$, so $D_i^{-1}=1+O\bigl(p^{\nu(\zeta_{p^{i+1}}-1)}\bigr)$.
Using the hypothesis $\nu(G_{i+1}')>\frac{p}{p-1}$, we get
\[
\frac{-G_{i+1}'+p\lambda X_i}{D_i}
= -G_{i+1}'+p\lambda X_i + o\bigl(p^{p/(p-1)}\bigr).
\]
Consequently,
\[
X_i'
=\frac{1}{p\lambda}\ln\left(1+p\lambda X_i - G_{i+1}' + o\bigl(p^{p/(p-1)}\bigr)\right)
= X_i + \frac{1}{p\lambda}\,o\bigl(p^{p/(p-1)}\bigr),
\]
where the last equality uses the power-series expansion of $\ln(1+T)$ and the fact that for $m>1$ one has
\(\nu(T^m/m)>\frac{p}{p-1}\) as soon as $\nu(T)\ge \frac{p}{p-1}$.
Thus $\nu(X_i'-X_i)>0$, so by discreteness of the valuation on $R$ we conclude
$X_i'\equiv X_i\pmod{(\zeta_{p^s}-1)}$.
\end{proof}

Proposition~\ref{propcomputeWittvectorF} suggests that one may construct the polynomial $G_{i+1}$ in \eqref{eqnui} as an approximation of $G_{i+1,p}((X_0', \ldots, X_{i-1}', 0))$.

\begin{definition}
We say that
$$
f = \sum_{n_0, \ldots, n_{s-1} \geq 0} a_{n_0, \ldots, n_{s-1}} X_0^{n_0} \cdots X_{s-1}^{n_{s-1}} \in R \llbracket X_0, \ldots, X_{s-1} \rrbracket
$$
satisfies the \emph{Tate condition} if, for every multi-index $n = (n_0, \ldots, n_{s-1})$, the coefficients $a_n$ satisfy:
$$
\nu(a_n) \to \infty \quad \text{as} \quad |n| = n_0 + \cdots + n_{s-1} \to \infty.
$$
\end{definition}

The following lemma proves the stability of the Tate condition under substitution.

\begin{lemma}
\label{lem:tate-substitution}
Let $R$ be a discretely valued ring with valuation $\nu$.
Let $f\in R\llbracket T_1,\ldots,T_m\rrbracket$ satisfy the Tate condition.
Suppose $g_1,\ldots,g_m\in R\llbracket X_1,\ldots,X_n\rrbracket$ satisfy the Tate condition and have zero constant terms.
Then $f(g_1,\ldots,g_m)\in R\llbracket X_1,\ldots,X_n\rrbracket$ satisfies the Tate condition.
\end{lemma}

\begin{proof}
Write
\[
f=\sum_{\alpha\in\mathbb{Z}_{\ge 0}^m} a_{\alpha}T^{\alpha},
\qquad
T^{\alpha}=T_1^{\alpha_1}\cdots T_m^{\alpha_m},
\qquad
|\alpha|:=\alpha_1+\cdots+\alpha_m.
\]
Since each $g_j$ has zero constant term, we have $g_j\in (X_1,\ldots,X_n)$, hence
\[
(g_1,\ldots,g_m)^{\alpha}=g_1^{\alpha_1}\cdots g_m^{\alpha_m}\in (X_1,\ldots,X_n)^{|\alpha|}.
\]

Fix a multi-index $\beta$ and write $c_{\beta}\in R$ for the coefficient of $X^{\beta}$ in
$f(g_1,\ldots,g_m)$. Since $(g_1,\ldots,g_m)^{\alpha}$ has total degree $\ge |\alpha|$,
only indices $\alpha$ with $|\alpha|\le |\beta|$ can contribute to $c_{\beta}$.

Now fix $M\in\mathbb{R}$. By the Tate condition on $f$, there exists $D$ such that
$\nu(a_{\alpha})>M$ for all $\alpha$ with $|\alpha|>D$.
Decompose
\[
c_{\beta}
=
\sum_{|\alpha|\le D} a_{\alpha}\cdot \bigl[ X^{\beta} \bigr](g_1,\ldots,g_m)^{\alpha}
\;+
\sum_{D<|\alpha|\le |\beta|} a_{\alpha}\cdot \bigl[ X^{\beta} \bigr](g_1,\ldots,g_m)^{\alpha},
\]
where $[X^{\beta}](\cdots)$ denotes “coefficient of $X^{\beta}$”.

\smallskip
\noindent\emph{(1) The contribution from $|\alpha|>D$.}
Every term in the second sum has valuation $>M$ because it is multiplied by $a_{\alpha}$.

\smallskip
\noindent\emph{(2) The contribution from $|\alpha|\le D$.}
There are only finitely many such $\alpha$. Fix one.
When we expand $(g_1,\ldots,g_m)^{\alpha}=g_1^{\alpha_1}\cdots g_m^{\alpha_m}$,
the coefficient of $X^{\beta}$ is a finite sum of products of $|\alpha|$ coefficients taken from the
power series $g_1,\ldots,g_m$, where the total degrees of the chosen monomials add up to $|\beta|$.
Therefore at least one of these chosen monomials has total degree $\ge |\beta|/|\alpha|$.

Since each $g_j$ satisfies the Tate condition, the valuations of coefficients of $g_j$ tend to $\infty$
as the total degree tends to $\infty$. Hence, for fixed $\alpha$, we have
\[
\nu\!\left(\bigl[X^{\beta}\bigr](g_1,\ldots,g_m)^{\alpha}\right)\longrightarrow\infty
\qquad\text{as }|\beta|\to\infty.
\]
Multiplying by the fixed scalar $a_{\alpha}$ does not affect the conclusion.

\smallskip
Combining (1) and (2), we see that for every $M$ there exists $B$ such that $\nu(c_{\beta})>M$ for all
$|\beta|>B$. This is precisely the Tate condition for $f(g_1,\ldots,g_m)$.
\end{proof}

\begin{proposition}
\label{propapproxG}
For each $i$, one can compute
$G_{i+1}(X_0, \ldots, X_{i-1}) \in \mathbb{Z}_{(p)}[\zeta_{p^{i+1}}][X_0, \ldots, \allowbreak X_{i-1}]$ inductively
such that:
\begin{enumerate}
\item \label{propapproxG1}
$G_{i+1,p}((X_0', \ldots, X_{i-1}', 0)) \in \mathbb{Z}_{(p)}[\zeta_{p^{i+1}}]\llbracket X_0, \ldots, X_{i-1} \rrbracket$
satisfies the Tate condition,
\item \label{propapproxG2}
$\nu(G_{i+1}') > \frac{p}{p-1}$ and
$G_{i+1}' \in \mathbb{Z}_{(p)}[\zeta_{p^{i+1}}]\llbracket X_0, \ldots, X_{i-1} \rrbracket$
satisfies the Tate condition, and
\item \label{propapproxG3}
$X_i'$ satisfies the Tate condition as a series in
$\mathbb{Z}_{(p)}[\zeta_{p^{i+1}}]\llbracket X_0, \ldots, X_i \rrbracket$.
\end{enumerate}
\end{proposition}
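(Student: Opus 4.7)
The plan is to proceed by strong induction on $i$, establishing all three conditions simultaneously at each stage. For the base case $i=0$, set $G_1:=1$, matching the denominator $1+p\lambda X_0$ of \eqref{eqnmoduliKASW}. Condition (\ref{propapproxG1}) is trivial since the substitution reduces to $G_{1,p}$ evaluated at the zero vector, which is $1$; condition (\ref{propapproxG2}) is trivial since $G_1'=0$; and condition (\ref{propapproxG3}) follows from
\[
X_0' = \frac{1}{p\lambda}\ln(1+p\lambda X_0) = \sum_{n\ge 1} \frac{(-1)^{n-1}(p\lambda)^{n-1}}{n}X_0^n,
\]
whose coefficient of $X_0^n$ has valuation $(n-1)\cdot \tfrac{p}{p-1} - \nu(n)\to\infty$ using $\nu(n)\le \log_p n$.

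For the inductive step, assume the three properties for every index $\le i-1$. The key idea is to first produce a suitable ``correct'' power series and then truncate it. Concretely, form
\[
A := G_{i+1,p}\bigl((X_0', \ldots, X_{i-1}', 0)\bigr).
\]
By Lemma \ref{lemmaG}, the coefficient of $a_0^{n_0}\cdots a_{i-1}^{n_{i-1}}$ in $G_{i+1,p}(\underline{a})$ carries a factor $(\zeta_{p^{i+1}}-1)^{n_0}\cdots \lambda^{n_{i-1}}$, hence has valuation at least $|n|/(p^i(p-1))$. Combined with the inductive Tate condition and integrality of the $X_j'$, this yields that $A$ is a well-defined series in $\mathbb{Z}_{(p)}[\zeta_{p^{i+1}}][\![X_0,\ldots,X_{i-1}]\!]$ satisfying the Tate condition, proving (\ref{propapproxG1}). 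Next, define $G_{i+1}$ to be the sum of those monomials of $A$ whose coefficient has valuation at most $\tfrac{p}{p-1}$; by the Tate property this is a finite sum, so $G_{i+1}$ is a polynomial in $\mathbb{Z}_{(p)}[\zeta_{p^{i+1}}][X_0,\ldots,X_{i-1}]$ and the remainder $G_{i+1}' = A - G_{i+1}$ has all coefficients of valuation strictly greater than $\tfrac{p}{p-1}$, verifying (\ref{propapproxG2}). For (\ref{propapproxG3}), Proposition \ref{propcomputeX} gives
\[
X_i' = \frac{1}{p\lambda}\ln\!\Bigl(1 + \frac{-G_{i+1}' + p\lambda X_i}{A}\Bigr).
\]
Since $\nu(A-1)>0$, the reciprocal $1/A$ is itself Tate; and since $\nu(-G_{i+1}'+p\lambda X_i)\ge \tfrac{p}{p-1}$, the argument of $\ln$ lies in $1 + \{\text{Tate series of valuation}\ge \tfrac{p}{p-1}\}$. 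The expansion $\ln(1+y)=\sum (-1)^{n-1}y^n/n$ then converges termwise with coefficients of valuation at least $n\cdot \tfrac{p}{p-1} - \log_p n$, and dividing by $p\lambda$ still leaves valuations tending to $\infty$, so $X_i'$ is Tate.

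The main obstacle is the well-definedness and Tate property of the composition $A$. One must carefully combine the cyclotomic-weighted lower bounds on the coefficients of $G_{i+1,p}$ from Lemma \ref{lemmaG} with the inductive Tate and integrality properties of the $X_j'$; in particular, each $X_j'$ a priori carries denominators $1/(p\lambda)$ from the logarithm, and these must be absorbed by the valuation supplied by the factors $(\zeta_{p^{i+1-j}}-1)$ in front of each variable in $G_{i+1,p}$. A clean way to execute this is to write $X_j' = X_j + (X_j'-X_j)$ with $\nu(X_j'-X_j)\ge \nu(\zeta_{p^{j+1}}-1)$ (Proposition \ref{propcomputeWittvectorF}), so that $A$ is visibly a small perturbation of the manifestly Tate series $G_{i+1,p}((X_0,\ldots,X_{i-1},0))$. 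Once this estimate is in hand, the truncation step and the final $\ln$-expansion for $X_i'$ are essentially bookkeeping with Gauss valuations.
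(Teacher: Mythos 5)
Your proposal is correct and follows essentially the same route as the paper: base case $G_1=1$ with an explicit valuation computation for $X_0'$, invoke Lemma \ref{lemmaG} together with the inductive Tate property of the $X_j'$ to get the Tate property of the composite $G_{i+1,p}((X_0',\ldots,X_{i-1}',0))$, truncate at valuation $p/(p-1)$ to define $G_{i+1}$, and then deduce the Tate property of $X_i'$ from the logarithm formula of Proposition \ref{propcomputeX}. The closing remark about viewing $A$ as a perturbation of $G_{i+1,p}((X_0,\ldots,X_{i-1},0))$ is a slightly different way to organize the bookkeeping than the paper's direct composition argument, but it leads to the same estimate.
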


\begin{proof}
We proceed by induction on $i\ge 0$.

Let's consider the base case $i=0$. Take $G_1=1$, so $u_0=1+p\lambda X_0$.
By Proposition~\ref{propcomputeX}, the associated Witt coordinate is
\[
X_0'
=\frac{1}{p\lambda}\ln(1+p\lambda X_0)
=\sum_{j\ge 1}(-1)^{j+1}\frac{(p\lambda)^{j-1}}{j}X_0^j.
\]
Writing $j=p^{r}\ell$ with $(p,\ell)=1$ and using $\nu(\lambda)=\frac{1}{p-1}$ and
$\nu(j)=r$ (since $\nu(p)=1$), we have
\[
\nu\!\left(\frac{(p\lambda)^{j-1}}{j}\right)
=(j-1)\,\nu(p\lambda)-\nu(j)
=(j-1)\left(1+\frac{1}{p-1}\right)-r
=\frac{p(j-1)}{p-1}-r.
\]
In particular, as $j\to\infty$ we have
\[
\nu\!\left(\frac{(p\lambda)^{j-1}}{j}\right)=\frac{p(j-1)}{p-1}-r\longrightarrow\infty,
\]
since $r=\nu(j)=\nu_p(j)$ grows at most logarithmically in $j$.
Hence $X_0'$ satisfies the Tate condition, establishing the base case.

Assume that $G_1,\ldots,G_i$ have been constructed and that
$X_0',\ldots,X_{i-1}'$ satisfy the Tate condition.

By Lemma~\ref{lemmaG}, the series
$G_{i+1,p}((X_0', \ldots, X_{i-1}', 0))$ satisfies the Tate condition when viewed as a
power series in the variables $X_0',\ldots,X_{i-1}'$.
Since each $X_j'$ itself satisfies the Tate condition as a power series in
$X_0,\ldots,X_j$ and has zero constant term, we may substitute these series into
$G_{i+1,p}((X_0', \ldots, X_{i-1}', 0))$.
By Lemma~\ref{lem:tate-substitution}, the result again satisfies the Tate condition as a
series in $X_0,\ldots,X_{i-1}$.
This proves \ref{propapproxG1}.

Define $G_{i+1}(X_0,\ldots,X_{i-1})$ to be the truncation of
$G_{i+1,p}((X_0', \ldots, X_{i-1}', 0))$ obtained by keeping only those terms whose
coefficients have valuation at most $\frac{p}{p-1}$.
Then $G_{i+1}'$
has coefficientwise valuation strictly larger than $\frac{p}{p-1}$, and it still satisfies the Tate condition.
This proves \ref{propapproxG2}.

Finally, by Proposition~\ref{propcomputeX} we have
\[
X_i'
=\frac{1}{p\lambda}\ln\left(
\frac{G_{i+1}(X_0,\ldots,X_{i-1})+p\lambda X_i}{G_{i+1,p}((X_0',\ldots,X_{i-1}',0))}
\right)
=\frac{1}{p\lambda}\ln(1+U),
\]
where
\[
U=\frac{-G_{i+1}'+p\lambda X_i}{G_{i+1,p}((X_0',\ldots,X_{i-1}',0))}.
\]
The numerator and denominator of $U$ satisfy the Tate condition, and the denominator is a unit
(with constant term $1$), so its inverse also satisfies the Tate condition; hence $U$ satisfies the Tate condition in
$\mathbb{Z}_{(p)}[\zeta_{p^{i+1}}]\llbracket X_0,\ldots,X_i\rrbracket$.

Now write
\[
\ln(1+U)=\sum_{m\ge 1}(-1)^{m-1}\frac{U^m}{m}.
\]
Set $\delta:=\nu(U)>0$. Fix $M\in\mathbb{R}$. Choose $m_0$ such that
\[
m\delta-\nu(m)>M\qquad (m\ge m_0).
\]
Then for $m\ge m_0$, every coefficient of $U^m/m$ has valuation $>M$.
For each of the finitely many $1\le m<m_0$, since $U^m$ satisfies the Tate condition, there exists $B_m$ such that
all coefficients of $U^m/m$ of total degree $>B_m$ have valuation $>M$.
Let $B=\max_{1\le m<m_0} B_m$. Then every coefficient of $\ln(1+U)$ of total degree $>B$ has valuation $>M$.
Therefore $\ln(1+U)$ satisfies the Tate condition.
It follows that $X_i'$ satisfies the Tate condition as well, proving \ref{propapproxG3}.
\end{proof}

\subsection{Defining the {\'E}tale Cover}

\begin{definition}
    Define $R := \mathbb{Z}_{(p)}[\zeta_{p^{s}}]$ and set $\pi := \zeta_{p^s} - 1$. 
    Let $\mathcal{V}_s$ be the open subset of the projective space $\mathbb{P}^s_R = \operatorname{Proj} R[X_0, \ldots, X_{s-1}, Y]$ given by
    \begin{equation*}
        \operatorname{Spec} R\!\left[X_0, \ldots, X_{s-1}, 
        \frac{1}{1+p\lambda X_0}, 
        \frac{1}{G_2(X_0) + p\lambda X_1}, 
        \ldots, 
        \frac{1}{G_s(X_0, \ldots, X_{s-2}) + p\lambda X_{s-1}} \right],
    \end{equation*}
    where each $G_i$ is a polynomial constructed in Proposition~\ref{propapproxG}. 
    Let $\mathcal{W}_s$ be the integral closure of $\mathcal{V}_s$ in the extension of 
    $K(X_0, \ldots, X_{s-1})$ defined by
    \begin{equation}
    \label{eqnKummerdefn}
        Z_{s-1}^{p^s} 
        = (1+p\lambda X_0) \prod_{i=1}^{s-1}  
        \big(G_{i+1}(X_0, \ldots, X_{i-1}) + p\lambda X_i\big)^{p^i}.
    \end{equation}
\end{definition}

\begin{proposition}
\label{propspecialproperties}
The special fiber of $\mathcal{W}_s \to \mathcal{V}_s$ is birational to the
Artin--Schreier--Witt cover of
$\operatorname{Proj} k[x_0, \ldots, x_{s-1}, y]$
associated with the Witt vector $(x_0, \ldots, x_{s-1})$,
where $x_i$ (resp.~$y$) is the reduction of $X_i$ (resp.~$Y$) modulo $\pi$.
\end{proposition}

\begin{proof}
Applying Proposition~\ref{propfieldlaurentresidue} with
$K=\mathbb{Q}(\zeta_{p^s})$, we obtain a Henselian local field
$\mathbb{K}=\mathcal{R}^{\mathrm{bd}}$ with associated ring
$\mathcal{R}^{\mathrm{int}}$ and residue field
$\kappa=\mathbb{F}_p((x_0, \ldots, x_{s-1}))$.

By construction, the Witt vector $(X_0', \ldots, X_{s-1}')$ is obtained from $(X_0,\ldots,X_{s-1})$ by iterated
application of the formal series $\ln(1+\cdot)$ (and multiplication by $(p\lambda)^{-1}$). Hence each $X_i'$ is a
formal power series in $X_0,\ldots,X_i$ with coefficients in $R=\mathbb{Z}_{(p)}[\zeta_{p^s}]$.
By Lemma~\ref{lem:embedding-formal-series-into-Rint}, we may view these series as elements of $\mathcal{R}^{\mathrm{int}}$.
In particular, $(X_0', \ldots, X_{s-1}')\in W_s(\mathcal{R}^{\mathrm{int}})$.

Therefore, by Theorem~\ref{theoremMatsudamorphism}, the Kummer class defined by
\eqref{eqnKummerdefn} corresponds to the Witt vector $(X_0', \ldots, X_{s-1}')$.
Reducing modulo $\pi$ gives $(x_0,\ldots,x_{s-1})\in W_s(\kappa)$, so the special fiber is
birational to the Artin--Schreier--Witt cover associated with $(x_0,\ldots,x_{s-1})$.
\end{proof}

\begin{proposition}
\label{propgenericunr}
The cover $\mathcal{W}_s \rightarrow \mathcal{V}_s$ is generically unramified.
\end{proposition}

\begin{proof}
By Kummer theory, the extension of function fields defined by \eqref{eqnKummerdefn} is cyclic
of degree $p^s$ (hence generically Galois with group $\mathbb{Z}/p^s$). Its branch locus on
$\mathbb{P}^s_R$ is contained in the complement of the maximal open subset on which the Kummer
class on the right-hand side of \eqref{eqnKummerdefn} is a unit. Concretely, it is contained in the
union of the divisor at infinity
\[
Y=0
\]
(taking $\mathbb{P}^s_R=\proj R[X_0,\ldots,X_{s-1},Y]$) together with the divisors
\[
G_{i+1}(X_0,\ldots,X_{i-1}) + p\lambda X_i = 0
\quad (0 \le i \le s-1).
\]

By definition of $\mathcal{V}_s \subset D_+(Y)$, each
$G_{i+1}(X_0,\ldots,X_{i-1}) + p\lambda X_i$ is inverted on $\mathcal{V}_s$, hence is a unit there;
also $\mathcal{V}_s\subset D_+(Y)$ is disjoint from $Y=0$.
Therefore the branch locus is disjoint from $\mathcal{V}_s$, so the induced morphism
$\mathcal{W}_s\to \mathcal{V}_s$ is generically unramified.
\end{proof}

\section{Computing the \texorpdfstring{$\mathcal{W}_s$}{Ws}}
\label{secE_i}
In this section, we analyze the structure of the $\mathbb{Z}/p^s$-cover $\mathcal{W}_s \rightarrow \mathcal{V}_s$ defined in the previous section. Specifically, we compute the polynomials $E_i$ in \eqref{eqnmoduliKASWcovering}.

\subsection{The case \texorpdfstring{$s=1$}{s=1}}
\label{secchangevars1}
As before, $\mathcal{W}_1 \rightarrow \mathcal{V}_1$ is given by
\begin{equation}
\label{eqnKummerlv1v1}
    Z_0^p = 1 + p \lambda X_0.
\end{equation}
Consider the change of variables $Z_0 = 1 + \lambda Y_0$. This transforms \eqref{eqnKummerlv1v1} into
\begin{equation*}
    \begin{split}
        1 + p\lambda X_0 &= (1 + \lambda Y_0)^p \\
         &= 1 + \lambda^p Y_0^p + p\lambda Y_0 + \sum_{j=2}^{p-1} \binom{p}{j} \lambda^j Y_0^j.
    \end{split}
\end{equation*}
Hence it simplifies to
\begin{equation}
\label{eqnAStype}
    Y_0 + \frac{\lambda^{p-1}}{p} Y_0^p + o\left(p^{0}\right) = X_0.
\end{equation}
Here, $o\left(p^{0}\right)$ denotes terms with valuation strictly larger than zero. Note that, since 
\begin{equation*}
    \frac{\lambda^{p-1}}{p} \equiv -1 \pmod{\lambda}
\end{equation*} 
\cite[Section 2.4]{MR3167623}, the equation above reduces modulo $\lambda$ to the Artin--Schreier equation
\begin{equation*}
    y_0 - y_0^p = x_0.
\end{equation*}

To generalize this change of variables to the case $s>1$, we follow the strategy of associating a Witt vector as in the previous section. Recall that the right-hand side of \eqref{eqnKummerlv1v1}, due to Proposition \ref{propcomputeWittvectorF}, is associated with the Witt vector $(X_0')$, where $X_0' \in \mathbb{Z}_{(p)}[\zeta_p]\llbracket X_0 \rrbracket$ is congruent to $X_0$ modulo $\lambda$. We can rewrite the defining equation as
\begin{equation}
\label{eqnKummerlv1}
    Z_0^p = E_{1,p}(p (X_0')).
\end{equation}
Note that if we can write $Z_0 = E_{1,p}(Y_0')$, then, due to \eqref{eqnKummerlv1}, $Y_0' = X_0'$. Therefore, it makes sense to define the change of variables as an approximation of 
\begin{equation*}
    E_{1,p}(Y_0') = \exp(\lambda Y_0') = 1 + \lambda Y_0' + \frac{(\lambda Y_0')^2}{2!} + \frac{(\lambda Y_0')^3}{3!} + \cdots
\end{equation*}
Thus, we choose $Z_0 = 1 + \lambda Y_0$. Then, as $Z_0 = E_{1,p}(Y_0') = \exp(\lambda Y_0')$, one can write $Y_0'$ in terms of $Y_0$ as follows:
\begin{equation}
\begin{split}
    Y_0' &= \frac{1}{\lambda} \ln(1 + \lambda Y_0) \\
    &= Y_0 - \frac{\lambda Y_0^2}{2} + \cdots + (-1)^{p-1} \frac{\lambda^{p-1} Y_0^p}{p} + \cdots
\end{split}
\end{equation}
It follows that
\begin{equation*}
    Y_0' \equiv Y_0 - Y_0^p  \pmod{\lambda}.
\end{equation*}
Thus, we can also conclude that $Y_0 - Y_0^p - X_0 \in \lambda \mathbb{Z}_{(p)}[\zeta_p]\llbracket X_0, Y_0 \rrbracket$.

\subsection{Inductive step}
In general, for $i=1, \ldots, s$, we aim to construct a system of change of variables
\begin{equation}
\label{eqnchangeofvarsZ}
    Z_{i-1} = E_i(Y_0, \ldots, Y_{i-2}) + \lambda Y_{i-1},
\end{equation}
to better understand the structure of $\mathcal{W}_s$ and to derive an Artin--Schreier-type equation similar to \eqref{eqnAStype}. As in Section \ref{secchangevars1}, beginning with the system of $\mathbb{Z}/p$-Kummer extensions
\begin{equation*}
    Z_{i-1}^p = Z_{i-2}\big(G_{i}(X_0, \ldots, X_{i-2}) + p\lambda X_{i-1}\big),
\end{equation*}
we seek a Witt vector $(Y_0', \ldots, Y_{s-1}')$; equivalently, we write
\[
(X_0',\ldots,X_{s-1}')=(Y_0',\ldots,Y_{s-1}')
\]
for the same Witt vector viewed in the $X$-coordinates on $\mathcal{V}_s$. Then, for $1 \le i \le s$,
\begin{equation*}
    Z_{i-1} = E_{i,p}((X_0', \ldots, X_{i-1}')).
\end{equation*}

\begin{proposition}
\label{propcomputeY}
    With the above notations, the components $Y_i'$ of the Witt vector $(Y_0', \ldots, Y_{s-1}')$ can be computed inductively as follows:
    \allowdisplaybreaks
    \begin{align*}
    Y_0' &= \frac{1}{\lambda} \ln\left(1 + \lambda Y_0\right), \\
    Y_1' &= \frac{1}{\lambda} \ln\left(\frac{E_2(Y_0) + \lambda Y_1}{E_{2,p}((Y_0', 0))}\right), \\
    &\vdots \\
    Y_{s-1}' &= \frac{1}{\lambda} \ln\left(\frac{E_{s}(Y_0, \ldots, Y_{s-2}) + \lambda Y_{s-1}}{E_{s,p}((Y_0', \ldots, Y_{s-2}', 0))}\right).
    \end{align*}
\end{proposition}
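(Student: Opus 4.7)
The plan is to prove this by induction on $i$, closely mirroring the argument in Proposition~\ref{propcomputeX}, but for the ``exponential-side'' Witt vector associated to $Z_{i-1}$ via $E_{i,p}$. The relation I intend to maintain at each stage is $Z_{i-1} = E_{i,p}((Y_0', \ldots, Y_{i-1}'))$, which, combined with the change-of-variables equation $Z_{i-1} = E_i(Y_0, \ldots, Y_{i-2}) + \lambda Y_{i-1}$ (using the convention $E_1 = 1$), determines each $Y_{i-1}'$ explicitly.

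For the base case $i = 1$, the equation $1 + \lambda Y_0 = Z_0 = E_{1,p}(Y_0') = \exp(\lambda Y_0')$ immediately yields $Y_0' = \frac{1}{\lambda} \ln(1 + \lambda Y_0)$, as required. For the inductive step, I would assume that $Y_0', \ldots, Y_{i-2}'$ have been constructed so that $Z_{i-2} = E_{i-1,p}((Y_0', \ldots, Y_{i-2}'))$, and decompose the target Witt vector as
\[
(Y_0', \ldots, Y_{i-1}') = (Y_0', \ldots, Y_{i-2}', 0) + (0, \ldots, 0, Y_{i-1}').
\]
Applying the multiplicativity identity~\eqref{eqnEscommutative} together with Proposition~\ref{propsimplifywitt} then factors
\[
E_{i,p}((Y_0', \ldots, Y_{i-1}')) = E_{i,p}((Y_0', \ldots, Y_{i-2}', 0)) \cdot E_{1,p}(Y_{i-1}') = E_{i,p}((Y_0', \ldots, Y_{i-2}', 0)) \cdot \exp(\lambda Y_{i-1}').
\]
Setting this equal to $E_i(Y_0, \ldots, Y_{i-2}) + \lambda Y_{i-1}$ and solving via the logarithm produces precisely the stated formula for $Y_{i-1}'$.

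The main subtlety, as in the proof of Proposition~\ref{propcomputeWittvectorF}, will be to justify that the logarithm is well-defined and convergent in the ambient ring, i.e.\ that the argument of $\ln$ is congruent to $1$ modulo a positive power of $\pi$. The denominator $E_{i,p}((Y_0', \ldots, Y_{i-2}', 0))$ is a unit close to $1$ by Lemma~\ref{lemmaG} together with the Tate condition from Proposition~\ref{propapproxG}, and the polynomials $E_i$ can be chosen (as in the later subsections of this section) to be truncations of $E_{i,p}$ that make the numerator and denominator mutually close. Once this convergence is in hand, expanding $\ln(1 + U)$ as a power series yields the formula uniquely within the ring, completing the induction.
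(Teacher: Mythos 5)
Your proposal is correct and takes essentially the same route as the paper: both start from the defining relation $Z_{i-1} = E_{i,p}((Y_0', \ldots, Y_{i-1}'))$, split the Witt vector as $(Y_0', \ldots, Y_{i-2}', 0) + (0, \ldots, 0, Y_{i-1}')$, and invoke the multiplicativity \eqref{eqnEscommutative} together with Proposition~\ref{propsimplifywitt} to isolate $\exp(\lambda Y_{i-1}')$. The paper's proof is even terser, leaving the convergence discussion you sketch to later propositions (notably Proposition~\ref{propY'}).
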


\begin{proof}
By definition, we have
\allowdisplaybreaks
\begin{equation*}
\begin{split}
    E_s(Y_0, \ldots, Y_{s-2}) + \lambda Y_{s-1} &= E_{s,p}((\underline{Y}_s')) \\
    &= E_{s,p}((Y_0', \ldots, Y_{s-2}', 0)) \cdot E_{s,p}((0, \ldots, 0, Y_{s-1}')) \\
    &= E_{s,p}((Y_0', \ldots, Y_{s-2}', 0)) \cdot E_{1,p}((Y_{s-1}')) \\
    &= E_{s,p}((Y_0', \ldots, Y_{s-2}', 0)) \cdot \exp\left(\lambda Y_{s-1}'\right).
\end{split}
\end{equation*}
The desired formula for $Y_{s-1}'$ follows immediately. Replacing $s$ by $i+1$ in the same computation gives the displayed formula for each $Y_i'$.
\end{proof}

The formula suggests that each $E_i$ in \eqref{eqnchangeofvarsZ} should be defined in terms of $E_{i,p}((Y_0', \ldots, Y_{i-2}', 0))$.

\begin{proposition}
\label{propASWtype}
    For each $i$, there exists a polynomial $E_{i}(Y_0, \ldots, Y_{i-2}) \in \mathbb{Z}_{(p)}[\zeta_{p^i}][Y_0, \ldots, Y_{i-2}]$ such that, applying the change of variables \eqref{eqnchangeofvarsZ}, the associated Witt vector $(Y_0', \ldots, Y_{s-1}')$ satisfies the relation:
    \begin{equation*}
        Y_{i-1}' = (-1)^{p-1}\frac{\lambda^{p-1}}{p} Y_{i-1}^p + Y_{i-1} + F_{i}(Y_0, \ldots, Y_{i-1}),
    \end{equation*}
    where $F_{i} \in \mathbb{Z}_{(p)}[\zeta_{p^i}] \llbracket Y_0, \ldots, Y_{i-1} \rrbracket$, and $F_i$ reduces to a polynomial in $\mathbb{F}_p[y_0, \ldots, y_{i-2}]$ modulo $\zeta_{p^i}-1$.
\end{proposition}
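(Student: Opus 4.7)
The argument proceeds by induction on $i$, paralleling the construction of the $G_i$ in Proposition \ref{propapproxG}. The base case $i = 1$ is already handled in Section \ref{secchangevars1}: with $E_1 = 1$, the explicit computation
\begin{equation*}
Y_0' = \tfrac{1}{\lambda}\ln(1 + \lambda Y_0) = Y_0 + (-1)^{p-1}\tfrac{\lambda^{p-1}}{p}Y_0^p + \sum_{j \ge 2,\, j \ne p}(-1)^{j+1}\tfrac{\lambda^{j-1}}{j}Y_0^j,
\end{equation*}
together with the valuation estimate $\nu(\lambda^{j-1}/j) > 0$ for $j \ge 2$, $j \ne p$ (already carried out in the proof of Proposition \ref{propapproxG} for the analogous $X_0'$), exhibits $Y_0'$ in the desired form with $F_1$ vanishing modulo $\zeta_p - 1$.

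For the inductive step, assume the conclusion for all indices less than $s$ and apply Proposition \ref{propcomputeY} to write
\begin{equation*}
Y_{s-1}' = \frac{1}{\lambda}\ln\!\left(\frac{E_s(Y_0, \ldots, Y_{s-2}) + \lambda Y_{s-1}}{\widetilde{E}_s}\right), \qquad \widetilde{E}_s := E_{s,p}\bigl((Y_0', \ldots, Y_{s-2}', 0)\bigr).
\end{equation*}
First I would verify that $\widetilde{E}_s$, viewed as a series in $Y_0, \ldots, Y_{s-2}$, satisfies the Tate condition (using the induction hypothesis on the $Y_j'$ together with Lemma \ref{lemmaG}-style control on $E_{s,p}$) and that $\widetilde{E}_s \equiv 1 \pmod{\zeta_{p^s} - 1}$; the latter follows from the identity $E_{s,p}(t) = E_p(\zeta_{p^s} t)/E_p(t)$, which exhibits $E_{s,p}(t) - 1$ as divisible by $\zeta_{p^s} - 1$. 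Then $E_s$ is defined as the polynomial obtained by truncating $\widetilde{E}_s$ at monomials whose coefficients have $\lambda$-adic valuation at most $\tfrac{p}{p-1}$, so that $\delta := \widetilde{E}_s - E_s$ satisfies $\nu(\delta) > \tfrac{p}{p-1}$, exactly as in the construction of $G_s$ in Proposition \ref{propapproxG}.

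Setting $V := (\lambda Y_{s-1} - \delta)/\widetilde{E}_s$, one has $Y_{s-1}' = \sum_{j \ge 1}\tfrac{(-1)^{j+1}}{j\lambda}V^j$. The $Y_{s-1}^k$-coefficient of $V^j/(j\lambda)$ is a multiple of $\lambda^{k-1}(-\delta)^{j-k}/(j\widetilde{E}_s^j)$, with $\lambda$-adic valuation at least $\tfrac{k-1}{p-1} + (j-k)\nu(\delta) - \nu(j)$. A case-by-case check shows that the only pairs $(j, k)$ producing valuation $0$ are $(1, 1)$ and $(p, p)$, while all other pairs produce valuation strictly greater than $\nu(\zeta_{p^s} - 1)$. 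The $(1, 1)$ contribution is $Y_{s-1}/\widetilde{E}_s$, which reduces modulo $\zeta_{p^s} - 1$ to $y_{s-1}$; the $(p, p)$ contribution is $(-1)^{p+1}\tfrac{\lambda^{p-1}}{p\widetilde{E}_s^p}Y_{s-1}^p$, whose coefficient reduces to $-1$ modulo $\zeta_{p^s} - 1$ via the identity $\lambda^{p-1}/p \equiv -1 \pmod{\lambda}$ recalled in Section \ref{secchangevars1}. Every other contribution is absorbed into $F_s$ and vanishes modulo $\zeta_{p^s} - 1$, so the image of $F_s$ in the residue is a polynomial involving only $y_0, \ldots, y_{s-2}$.

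The main obstacle is the bookkeeping required to confirm that each series operation---inverting $\widetilde{E}_s$, expanding $\ln(1+V)$, taking powers of $V$, and summing over $j$---preserves the Tate condition, so that $F_s$ genuinely lies in $\mathbb{Z}_{(p)}[\zeta_{p^s}]\llbracket Y_0, \ldots, Y_{s-1}\rrbracket$ and its reduction modulo $\zeta_{p^s} - 1$ is a polynomial rather than a formal series. The truncation threshold $\tfrac{p}{p-1}$ is calibrated precisely so that every $\delta$-contribution, once divided by $\lambda$ in the logarithmic expansion, has $\lambda$-valuation $> 1$ and therefore vanishes modulo $\zeta_{p^s} - 1$; this is exactly the role the same threshold plays in Proposition \ref{propapproxG}, and the two truncation arguments run in parallel.
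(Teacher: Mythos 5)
The base case and the general shape of your inductive step are aligned with the paper's strategy, but there is a genuine gap in the middle: the paper does \emph{not} truncate $\widetilde{E}_s = E_{s,p}((Y_0', \ldots, Y_{s-2}', 0))$ at valuation $\tfrac{p}{p-1}$, and in fact cannot, because the Tate condition you propose to verify for $\widetilde{E}_s$ almost certainly fails. The obstruction is already visible in the simplest factor: $E_{1,p}(a) = \exp(\lambda a)$ has $a^{p^k}$-coefficient $\lambda^{p^k}/(p^k)!$ of valuation exactly $\tfrac{s_p(p^k)}{p-1} = \tfrac{1}{p-1}$ for every $k$, so its coefficient valuations stay bounded. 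The deeper $E_{i,p}$ exhibit the same clustering near the sharp lower bound $\tfrac{1}{p^{i-1}(p-1)}$ from Proposition \ref{propboundEi}, and substituting $Y_j' = Y_j + (\text{higher-valuation corrections})$ does not erase this. This is precisely why the paper deploys the heavier machinery of Propositions \ref{propfinitesmallone}, \ref{propfinitesmallmulti}, and \ref{propfiniteEp}: those results pass through the $p$-th power $\widetilde{E}_s^{\,p} = E_{s,p}(p(Y_0', \ldots, Y_{s-2}', 0))$, whose $G$-factor is controlled by Lemma \ref{lemmaG}, and descend to conclude only that $\widetilde{E}_s$ has finitely many terms of valuation below $\tfrac{1}{p-1}$ — not below $\tfrac{p}{p-1}$, and not a Tate condition. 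The paper's own remark after the corollary, noting that even this weaker finiteness fails for $E_{s,p}((Y_0, \ldots, Y_{s-2}, 0))$, underscores how delicate the statement is. With your proposed threshold of $\tfrac{p}{p-1}$, the resulting $E_s$ would in general be a formal power series rather than a polynomial, and the proposition would be vacuous.

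Because the paper truncates at $\tfrac{1}{p-1}$ (Definition \ref{defnEi}), the discrepancy $S_s = E_s - \widetilde{E}_s$ has valuation only $\geq \tfrac{1}{p-1}$, so $S_s/\lambda$ in the logarithmic expansion can have infinitely many terms of valuation exactly $0$ that do \emph{not} vanish modulo $\zeta_{p^s}-1$. Your bookkeeping, calibrated to $\nu(\delta) > \tfrac{p}{p-1}$, treats all $\delta$-contributions as negligible and so does not confront this. The paper handles these surviving valuation-$0$ terms by a genuinely different argument: Proposition \ref{propminimialpoly} rewrites the Kummer equation after the change of variables as a minimal polynomial whose left-hand side manifestly lies in the coordinate ring $\mathbb{Z}_{(p)}[\zeta_{p^s}]\big[Y_0, \ldots, Y_{s-1}, \tfrac{1}{1+\lambda Y_0}, \ldots\big]$, and the final proposition identifies $-uY_{s-1}^p + Y_{s-1} + F_s$ with that polynomial expression modulo $\zeta_{p^s}-1$ via a unit $H_{s-1}^{-1} \equiv 1 \pmod{\zeta_{p^s}-1}$. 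That algebraic identification is what forces the residue of $F_s$ to be a genuine polynomial in $y_0, \ldots, y_{s-2}$, and it is the step your plan is missing.
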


The proposition holds for $i=1$. We assume it holds for $i \le s-1$.

\subsubsection{Approximation of the Power Series \texorpdfstring{$E_{s,p}$}{Esp}}
\label{secapproxE}

\begin{proposition}
\label{propboundEi}
All terms of $E_{i,p}((X_0, \ldots, X_{i-1}))$, aside from the constant term $1$, have positive valuations that are sharply bounded below by $\frac{1}{p^{i-1}(p-1)}$.
\end{proposition}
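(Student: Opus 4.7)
The plan is to reduce the multivariable statement to a valuation bound on the univariate series $E_{s,p}(t)$ for $1 \le s \le i$, and then to exploit the presentation $E_{s,p}(t) = E_p(\zeta_{p^s} t)/E_p(t)$ together with the classical integrality $E_p(t) \in \mathbb{Z}_{(p)}\llbracket t \rrbracket$.

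First, unwinding the definition gives $E_{i,p}((X_0,\dots,X_{i-1})) = \prod_{j=0}^{i-1} E_{i-j,p}(X_j)$, and every factor has constant term $1$. Any non-constant monomial in the expanded product must therefore involve at least one non-constant coefficient from some factor, multiplied by constant or non-constant coefficients from the others, so its valuation is at least the minimum valuation of a non-constant coefficient of one of the factors $E_{s,p}(t)$ for $s \in \{1,\ldots,i\}$. This reduces the claim to proving, for each such $s$, that every non-constant coefficient of $E_{s,p}(t)$ has valuation $\ge \frac{1}{p^i(p-1)}$.

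Second, I would write $E_{s,p}(t) = E_p(\zeta_{p^s} t)/E_p(t)$. Since $E_p(t) \in \mathbb{Z}_{(p)}\llbracket t \rrbracket$ has constant term $1$, its inverse also has $\mathbb{Z}_{(p)}$-coefficients, and multiplication by such a series cannot decrease valuations. Hence it suffices to bound the coefficients of the numerator difference
\[
E_p(\zeta_{p^s} t) - E_p(t) = \sum_{n \ge 1} a_n (\zeta_{p^s}^n - 1)\, t^n, \qquad a_n \in \mathbb{Z}_{(p)}.
\]
For $n \ge 1$ with $v_p(n) = k$, either $k \ge s$ (in which case the coefficient vanishes) or $\zeta_{p^s}^n$ is a primitive $p^{s-k}$-th root of unity, giving $\nu(\zeta_{p^s}^n - 1) = 1/(p^{s-k-1}(p-1))$. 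Minimizing over $k \in \{0,\ldots,s-1\}$ is achieved at $k = 0$, producing the bound $1/(p^{s-1}(p-1))$, attained at $n=1$. Since $s \le i$, this is $\ge 1/(p^{i-1}(p-1)) > 1/(p^i(p-1))$, which yields the desired lower bound and in particular shows that the valuations are strictly positive.

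Third, for sharpness, one observes that the linear coefficient of $X_0$ in $E_{i,p}((X_0,\dots,X_{i-1}))$ equals $\zeta_{p^i} - 1$, whose valuation $1/(p^{i-1}(p-1))$ is the smallest valuation achieved among the non-constant coefficients and matches the order of the stated bound up to the factor $p$. The only delicate point in the argument is the correct bookkeeping of $\nu(\zeta_{p^s}^n - 1)$ as a function of $v_p(n)$ and the observation that division by $E_p(t)$ preserves $\mathbb{Z}_{(p)}$-integrality; once these are in hand, the rest reduces to the ultrametric inequality applied to the product expansion.
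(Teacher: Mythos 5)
Your proof is correct, and it takes a genuinely different (and arguably more direct) route than the paper's. The paper argues by induction on $i$ through the doubling identity $E_{i,p}(p\underline{X}) = E_{i,p}((pX_0,0,\ldots,0))\cdot E_{i-1,p}((\tilde{X}_1,\ldots,\tilde{X}_{i-1}))$: writing $E_{i,p}(\underline{X}) = 1 + A$, it computes $\nu(E_{i,p}(p\underline{X})-1)$ from the inductive hypothesis, then extracts $\nu(A)$ from $\nu(A^p + \sum_{j<p}\binom{p}{j}A^j) = p\,\nu(A)$ via the ultrametric estimate. You instead reduce directly to the univariate factors $E_{s,p}(t) = E_p(\zeta_{p^s}t)/E_p(t)$ and lean on the classical fact that $E_p$ (and hence $E_p^{-1}$) lies in $\mathbb{Z}_{(p)}\llbracket t\rrbracket$, so the coefficientwise valuation is controlled entirely by $\nu(\zeta_{p^s}^n - 1)$, which is computed explicitly as a function of $v_p(n)$. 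This avoids the Frobenius recursion altogether and makes the bound an elementary cyclotomic computation; the paper's version has the advantage of reusing the multiplicative identities for $E_{i,p}$ that drive the rest of Section~\ref{secE_i}. One thing your argument makes quite visible, and which matches the paper's own proof but not its statement, is that the sharp lower bound is $\frac{1}{p^{i-1}(p-1)}$ (attained by the linear coefficient $\zeta_{p^i}-1$ of $X_0$), not $\frac{1}{p^i(p-1)}$ as written in the proposition; the statement appears to have an off-by-one in the exponent.
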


\begin{proof}
We write $\nu(f)$ for the minimum valuation among the coefficients of a (nonzero) power series
$f\in \mathbb{Z}_{(p)}[\zeta_{p^i}]\llbracket X_0,\ldots,X_{i-1}\rrbracket$.
Equivalently, $\nu(f-1)$ is the smallest valuation among the nonconstant terms of $f$.

\smallskip
\noindent\emph{Step 1: the case $i=1$.}
We have
\[
E_{1,p}((X_0))=\exp(\lambda X_0)=1+\sum_{n\ge 1}\frac{\lambda^n}{n!}X_0^n.
\]
Using Legendre’s formula
$\nu(n!)=\sum_{j\ge 1}\lfloor n/p^j\rfloor=\frac{n-s_p(n)}{p-1}$ and $\nu(\lambda)=\frac{1}{p-1}$, we obtain
\[
\nu\!\left(\frac{\lambda^n}{n!}\right)
=\frac{n}{p-1}-\frac{n-s_p(n)}{p-1}
=\frac{s_p(n)}{p-1}\ge \frac{1}{p-1}.
\]
Hence every nonconstant coefficient of $E_{1,p}$ has valuation $\ge \frac{1}{p-1}$, and the bound is sharp (e.g. for $n=1$).

\smallskip
\noindent\emph{Step 2: reduction to the valuation of $E_{i,p}(p\underline{X})-1$.}
Fix $i>1$ and write $\underline{X}=(X_0,\ldots,X_{i-1})$.
Set
\[
E_{i,p}(\underline{X})=1+A,\qquad A\in \mathbb{Z}_{(p)}[\zeta_{p^i}]\llbracket X_0,\ldots,X_{i-1}\rrbracket.
\]
Since $E_{i,p}$ is a homomorphism from the additive group of Witt vectors to units
(cf.\ \eqref{eqnEscommutative}), we have
\[
E_{i,p}(p\underline{X})=E_{i,p}(\underline{X})^p=(1+A)^p.
\]
Therefore
\begin{equation*}
E_{i,p}(p\underline{X})-1=A^p+\sum_{j=1}^{p-1}\binom{p}{j}A^j.
\end{equation*}

\smallskip
\noindent\emph{Step 3: compute $\nu\big(E_{i,p}(p\underline{X})-1\big)$.}
In the Witt ring, one has the standard decomposition
\[
p\underline{X}=(pX_0,0,\ldots,0)+(0,\widetilde X_1,\ldots,\widetilde X_{i-1}),
\]
where each $\widetilde X_j\equiv X_{j-1}^p\pmod{p}$ (more precisely, $\widetilde X_j=X_{j-1}^p+p\cdot(\text{a series}))$.
Using again multiplicativity of $E_{i,p}$, we get
\[
\begin{aligned}
E_{i,p}(p\underline{X})
&=E_{i,p}((pX_0,0,\ldots,0))\cdot E_{i,p}((0,\widetilde X_1,\ldots,\widetilde X_{i-1}))\\
&=E_{i,p}((pX_0,0,\ldots,0))\cdot E_{i-1,p}((\widetilde X_1,\ldots,\widetilde X_{i-1})).
\end{aligned}
\]
The first factor satisfies $\nu(\cdot-1)\ge 1$: indeed,
\[
E_{i,p}((pX_0,0,\ldots,0))=E_{i,p}(pX_0)=\exp(T)
\]
with
\[
T=\sum_{r=0}^{i-1}(\zeta_{p^{i-r}}-1)\frac{(pX_0)^{p^r}}{p^r}
=\sum_{r=0}^{i-1}(\zeta_{p^{i-r}}-1)\,p^{p^r-r}\,X_0^{p^r}.
\]
Each coefficient of $T$ has valuation at least $1$, hence $T\in p\,\mathbb{Z}_{(p)}[\zeta_{p^i}]\llbracket X_0\rrbracket$ and so $\exp(T)\equiv 1\pmod{p}$.
By the induction hypothesis applied to $E_{i-1,p}$ and the congruences $\widetilde X_j\equiv X_{j-1}^p\pmod{p}$,
we obtain
\[
\nu\Big(E_{i-1,p}((\widetilde X_1,\ldots,\widetilde X_{i-1}))-1\Big)
\ge \frac{1}{p^{i-2}(p-1)},
\]
and this bound is sharp except in the exceptional case $(p,i)=(2,2)$.
Consequently,
\[
\nu\big(E_{i,p}(p\underline{X})-1\big)=\frac{1}{p^{i-2}(p-1)}\qquad\text{for }(p,i)\ne (2,2).
\]

\smallskip
\noindent\emph{Step 4: deduce the bound for $A$ (hence for $E_{i,p}(\underline{X})-1$).}
For $(p,i)\ne(2,2)$, we have $\nu(E_{i,p}(p\underline{X})-1)<1$, hence in particular $\nu(A)<1$.
Also $\nu\!\left(\binom{p}{j}\right)=1$ for $1\le j\le p-1$.

We first note that in fact $\nu(A)<\frac{1}{p-1}$: if $\nu(A)\ge \frac{1}{p-1}$, then
\[
\nu(A^p)\ge \frac{p}{p-1}>1
\qquad\text{and}\qquad
\nu\!\left(\binom{p}{j}A^j\right)=1+j\nu(A)\ge 1+\frac{1}{p-1}=\frac{p}{p-1}>1,
\]
so every term in
$A^p+\sum_{j=1}^{p-1}\binom{p}{j}A^j$
has valuation $>1$, contradicting $\nu(E_{i,p}(p\underline{X})-1)<1$.

Thus $\nu(A)<\frac{1}{p-1}$, and consequently
\[
\nu\!\left(\binom{p}{j}A^j\right)=1+j\nu(A)\ge 1+\nu(A).
\]
Since $(p-1)\nu(A)<1$, we have $p\nu(A)<1+\nu(A)$, so the minimal valuation in
$A^p+\sum_{j=1}^{p-1}\binom{p}{j}A^j$ is $p\nu(A)$.
Therefore
\[
\nu\big(E_{i,p}(p\underline{X})-1\big)=p\nu(A),
\qquad\text{hence}\qquad
\nu(A)=\frac{1}{p^{i-1}(p-1)}.
\]
This proves that every nonconstant term of $E_{i,p}((X_0,\ldots,X_{i-1}))$ has valuation
$\ge \frac{1}{p^{i-1}(p-1)}$, and the bound is sharp.

Finally, a direct computation in the exceptional case $(p,i)=(2,2)$ shows
$\nu\big(E_{2,p}((X_0,X_1))-1\big)=\frac{1}{2}$ (with $p=2$).
\end{proof}

\begin{proposition}
\label{propfinitesmallone}
    Suppose $f(X) = \sum_{j \ge 0} a_j X^j \in R\llbracket X \rrbracket$, with $a_0\in R^\times$, $\nu(a_j) > 0$ for $j>0$ and the valuations bounded below by a number $B>0$. Suppose moreover that $f(X)^p$ has a finite number of terms whose coefficients have valuations strictly smaller than $p/(p-1)$. Then $f(X)$ itself has only finitely many terms whose coefficients have valuations strictly smaller than $1/(p-1)$.
\end{proposition}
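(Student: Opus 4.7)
The plan is a contradiction argument combining a binomial reduction, the Newton polygon of $f$, and a Frobenius computation in a small truncation of the valuation ring.

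First I would reduce to a cleaner setup. Writing $g := f - 1$ and splitting $g = g_{\mathrm{sm}} + g_{\mathrm{lg}}$ according to whether the coefficient satisfies $\nu < 1/(p-1)$ or $\nu \ge 1/(p-1)$, I expand $f^p = ((1 + g_{\mathrm{sm}}) + g_{\mathrm{lg}})^p$ by the binomial theorem. Every term of index $k \ge 1$ contains a factor $g_{\mathrm{lg}}^k$ of valuation $\ge k/(p-1)$, and for $1 \le k \le p-1$ the factor $\binom{p}{k}$ contributes an extra $p$; so each such term has valuation at least $\min(p/(p-1),\; 1 + 1/(p-1)) = p/(p-1)$. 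Therefore $f^p$ and $(1 + g_{\mathrm{sm}})^p$ agree modulo coefficients of valuation $\ge p/(p-1)$, and I may assume every nonzero coefficient of $g$ lies in $[B, 1/(p-1))$, with the support still infinite.

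Next, the Newton polygon $N_g$ of $g$ is the convex non-decreasing lower hull of $\{(j, \nu(a_j))\}$, and $N_g(j) \le \nu(a_j) < 1/(p-1)$ at arbitrarily large support indices, so $N_g$ is uniformly bounded above by $1/(p-1)$. A convex, non-decreasing, bounded function is eventually constant, so $N_g$ is flat at some height $h_0$ for $j \ge j_0$. Because the flat region still contains infinitely many support indices, $h_0 < 1/(p-1)$ strictly and $\liminf_{j \to \infty} \nu(a_j) = h_0$. Passing to a ramified extension $V$ where $\pi^{h_0}$ is defined, I set $\tilde g := g/\pi^{h_0}$; its coefficients have non-negative valuation, with infinitely many satisfying $\nu(\tilde a_j) < \epsilon/p$ for any prescribed $\epsilon > 0$.

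For the core step, I pick $\epsilon \in (0,1]$ small enough that both $\epsilon < 1 - (p-1)h_0$ and $\epsilon < p/(p-1) - p h_0$ (both are positive because $h_0 < 1/(p-1)$). In $V/\pi^{\epsilon}V$ one has $p = 0$, so Frobenius is a ring homomorphism on $(V/\pi^\epsilon V)\llbracket X \rrbracket$ and $(\overline{\tilde g})^p = \sum_j \overline{\tilde a_j^{\,p}}\,X^{jp}$; whenever $\nu(\tilde a_j) < \epsilon/p$ the coefficient $\overline{\tilde a_j^{\,p}}$ is nonzero, so $\nu([X^{jp}]\, g^p) < p h_0 + \epsilon$ for infinitely many $j$. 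On the other hand, for $1 \le k \le p-1$ the Newton polygon of $g^k$ gives $\nu([X^{jp}]\, g^k) \ge k h_0$ once $jp/k \ge j_0$, so each other contribution $\binom{p}{k}[X^{jp}]\, g^k$ to $f^p$ has valuation at least $1 + h_0$; the choice $\epsilon < 1 - (p-1)h_0$ means $p h_0 + \epsilon < 1 + h_0$, so the $g^p$-contribution dominates and $\nu([X^{jp}]\, f^p) < p h_0 + \epsilon < p/(p-1)$ for infinitely many $j$, contradicting the hypothesis.

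The hard part is the limit case where $h_0$ is not attained by any individual $\nu(a_j)$: reducing modulo the maximal ideal would kill $\tilde g$ outright and leave Frobenius with nothing to work with, so one really needs the finer truncation modulo $\pi^\epsilon$ with $\epsilon$ calibrated to the gap $1/(p-1) - h_0$. That strict gap, coming from the Newton polygon step, is precisely what makes the two simultaneous constraints on $\epsilon$ compatible.
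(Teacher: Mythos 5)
Your approach — reduce to the ``small'' part $g_{\mathrm{sm}}$, locate a stable valuation level $h_0$ via the Newton polygon, renormalize, and invoke Frobenius modulo $\pi^\epsilon$ — is genuinely different from the paper's, which is a direct multinomial expansion of $g^p$ combined with a descending-chain argument on the valuations of the $a_{i_j}$. Your binomial reduction and the Frobenius step are clean, and the two constraints on $\epsilon$ are computed correctly \emph{given} your premises.

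The gap is in the Newton polygon step. You need two things simultaneously: $h_0 < 1/(p-1)$ and $\liminf_j \nu(a_j) = h_0$. Neither follows from the convexity/boundedness argument as stated. If the polygon has only finitely many vertices, it becomes a horizontal ray at height $h_0 = \nu(a_{j_0})$ for the last vertex $j_0$, and the ray need not touch a single subsequent support point. In that case $h_0 < 1/(p-1)$ is correct (it equals some actual $\nu(a_{j_0})$), but $\liminf \nu(a_j)$ can be anything in $(h_0, 1/(p-1)]$; after dividing by $\pi^{h_0}$ you then have only finitely many coefficients of $\tilde g$ with $\nu(\tilde a_j) < \epsilon/p$, and the Frobenius step produces only finitely many small coefficients of $f^p$, giving no contradiction. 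The concrete bad case is $\nu(a_j) = 1/(p-1) - 1/j$ (after making the early terms respect $B>0$): the polygon flattens at height $\nu(a_{j_0})$, but $\liminf \nu(a_j) = 1/(p-1)$, so for this $h_0 := \liminf$ your two constraints $\epsilon < 1 - (p-1)h_0$ and $\epsilon < p/(p-1) - p h_0$ degenerate to $\epsilon < 0$. Your closing remark that ``the strict gap comes from the Newton polygon step'' is precisely the assertion that needs proof and is false in general. To repair this you would have to rule out $\liminf \nu(a_j) = 1/(p-1)$ by a separate argument; the natural one shows that near-cancellation of $a_m^p$ against $p\,a_{pm}$ would force $\nu(a_{p^km})\to -\infty$, contradicting the lower bound $B$ — which is essentially the descent the paper runs, so the two methods end up meeting rather than being independent.
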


\begin{proof}
Let $u:=a_0^{-1}f$. Since $a_0\in R^\times$, multiplying by the unit $a_0^{-p}$ does not change coefficient valuations, so
$f^p$ has a finite number of terms whose coefficients have valuation $<\tfrac{p}{p-1}$ if and only if $u^p$ does. Thus we may assume $a_0=1$.

    We can write
    $$
    \left( \sum_{j \geq 0} a_j X^j \right)^p = \sum_{n \geq 0} \left( \sum_{\substack{m_1 + m_2 + \dots + m_r = p \\ m_1 i_1 + m_2 i_2 + \dots + m_r i_r = n}} \frac{p!}{m_1! m_2! \dots m_r!} \cdot a_{i_1}^{m_1} a_{i_2}^{m_2} \cdots a_{i_r}^{m_r} \right) X^n
    =: \sum_{n \ge 0} c_n X^n.
    $$
    To compute the valuation of each summand in the inner sum, we find
    \begin{equation*}
        \nu \left( \frac{p!}{\prod_{k} m_k!} \cdot a_{i_1}^{m_1} a_{i_2}^{m_2} \cdots a_{i_r}^{m_r} \right) = 1 + \sum_{j=1}^r m_j \nu(a_{i_j}) - \sum_{k=1}^r \nu(m_k!).
    \end{equation*}
    Consider the cases based on $r$:
    \begin{enumerate}
        \item If $r = 1$: Then $m_1 = p$ and $n = pi_1$. Thus, the valuation is $p\nu(a_{i_1})$.
        \item If $r > 1$: Then $\nu(m_k!) = 0$, so the valuation becomes $1 + \sum_{j=1}^r m_j \nu(a_{i_j})$.
    \end{enumerate}

    Set
    \[
    l:=\max \{ n \mid \nu(c_n)< \tfrac{p}{p-1} \},
    \]
    with the convention $l:=-1$ if the set is empty. Assume there exists an index $m > \frac{l}{p}$ such that $\nu(a_m) < \frac{1}{p-1}$. Then $a_m^p$ contributes to $c_{pm}$, and
\[
\nu(a_m^p)=p\nu(a_m)<\frac{p}{p-1}.
\]
Since $pm>l$, we have $\nu(c_{pm})\ge \frac{p}{p-1}>p\nu(a_m)$.
By the non-archimedean property, if $\nu\!\left(\sum_\alpha t_\alpha\right)>\min_\alpha \nu(t_\alpha)$
then the minimum $\min_\alpha \nu(t_\alpha)$ is attained by at least two summands.
Applying this to the sum of multinomial summands defining $c_{pm}$, we deduce that there exists
a summand with $r'>1$ (hence with multinomial coefficient of valuation $1$) and distinct indices
$i_1,\ldots,i_{r'}\le pm$ such that $\sum_{j=1}^{r'} m_j i_j=pm$, $\sum_{j=1}^{r'} m_j=p$, and
\begin{equation}
\label{eqnfixpm}
1+\sum_{j=1}^{r'} m_j \nu(a_{i_j}) \;\le\; p\nu(a_m).
\end{equation}
In particular, \eqref{eqnfixpm} is impossible if $\nu(a_m)\le \frac{1}{p}$, because the left-hand side
is $>1$ (at least one index satisfies $i_j>0$ since $\sum m_j i_j=pm>0$, so one term has $\nu(a_{i_j})>0$)
whereas the right-hand side is $\le 1$. Hence $\nu(a_m)>\frac{1}{p}$.
    Also, since $\nu(a_m) < \frac{1}{p-1}$, we have
    \[
    p\nu(a_m)-1<\nu(a_m).
    \]
    From $\sum_{j=1}^{r'} m_j i_j = pm > l$, at least one index satisfies $i_s > \frac{l}{p}$.
    We claim $\nu(a_{i_s})<\frac{1}{p-1}$: otherwise
    \[
    1+\sum_{j=1}^{r'} m_j \nu(a_{i_j}) \ge 1+\nu(a_{i_s}) \ge \frac{p}{p-1},
    \]
    contradicting \eqref{eqnfixpm} together with $p\nu(a_m)<\frac{p}{p-1}$.
    By the same argument as above applied to $i_s$, we also have $\nu(a_{i_s})>\frac{1}{p}$.
    Finally, \eqref{eqnfixpm} gives
    \[
    \nu(a_{i_s})
    \le p\nu(a_m)-1-\sum_{j\ne s} m_j\nu(a_{i_j})-(m_s-1)\nu(a_{i_s})
    \le p\nu(a_m)-1
    <\nu(a_m).
    \]
    Thus, starting from $m_0:=m$, we recursively obtain indices $m_{t+1}>\frac{l}{p}$ such that
    \[
    \frac{1}{p}<\nu(a_{m_{t+1}})<\nu(a_{m_t})<\frac{1}{p-1}.
    \]
    This gives an infinite strictly decreasing sequence in the discrete value group $\nu(K^\times)$, impossible. Hence no such $m$ exists, so $\nu(a_m)\ge \frac{1}{p-1}$ for all $m>\frac{l}{p}$, as required.
\end{proof}

\begin{proposition}
\label{propfinitesmallmulti}
Proposition \ref{propfinitesmallone} extends to the multivariable case as follows.
Let $i>1$ and write
\[
f=\sum_{n\in\mathbb{Z}_{\ge 0}^i} a_n X^n \in R\llbracket X_0,\ldots,X_{i-1}\rrbracket,
\qquad X^n:=X_0^{n_0}\cdots X_{i-1}^{n_{i-1}}.
\]
Assume $a_{\mathbf{0}}=1$, $\nu(a_n)>0$ for $n\ne \mathbf{0}$, and that the set $\{\nu(a_n)\mid n\ne \mathbf{0}\}$ is bounded below by some $B>0$.
If $f^p$ has only finitely many monomials whose coefficients have valuation $<\frac{p}{p-1}$, then
$f$ has only finitely many monomials whose coefficients have valuation $<\frac{1}{p-1}$.
\end{proposition}

\begin{proof}
For each $r\in\{0,\ldots,i-1\}$, write
\[
	f=\sum_{j\ge 0} A_{r,j}\,X_r^j,
	\qquad
	A_{r,j}\in R\llbracket X_0,\ldots,\widehat{X_r},\ldots,X_{i-1}\rrbracket.
	\]
	Equip the coefficient ring with the Gauss valuation (still denoted $\nu$)
	\[
	\nu\!\left(\sum b_m X^m\right):=\inf_m \nu(b_m).
	\]
	Then $A_{r,0}$ is a unit, $\nu(A_{r,j})>0$ for $j>0$, and these valuations are bounded below by $B>0$.

Write also
\[
	f^p=\sum_{j\ge 0} C_{r,j}X_r^j.
	\]
	If infinitely many $j$ satisfied $\nu(C_{r,j})<\tfrac{p}{p-1}$, then for each such $j$ we can choose a monomial term of $C_{r,j}$ whose coefficient attains this valuation (the value group is discrete). This yields infinitely many monomials of $f^p$ with coefficient valuation $<\tfrac{p}{p-1}$, contradicting the hypothesis. Hence, for each fixed $r$, only finitely many $j$ satisfy $\nu(C_{r,j})<\tfrac{p}{p-1}$.
	Applying Proposition~\ref{propfinitesmallone} to the one-variable series $\sum_{j\ge0}A_{r,j}X_r^j$ (over the Gauss-valued coefficient ring) gives that
	\[
	J_r:=\{j\ge0\mid \nu(A_{r,j})<\tfrac{1}{p-1}\}
	\]
	is finite for each $r$.

	Now let $a_nX^n$ be a monomial of $f$ with $\nu(a_n)<\tfrac{1}{p-1}$. For each $r$, the coefficient $a_n$ appears in $A_{r,n_r}$, so
	\[
	\nu(A_{r,n_r})\le \nu(a_n)<\tfrac{1}{p-1},
	\]
	hence $n_r\in J_r$. Therefore
\[
n\in J_0\times\cdots\times J_{i-1},
\]
and the right-hand side is finite. So $f$ has only finitely many monomials whose coefficients have valuation $<\tfrac{1}{p-1}$.
\end{proof}

\begin{proposition}
\label{propfiniteEp}
    $E_{s,p}(p(Y_0', \ldots, Y_{s-2}', 0))$ contains only finitely many terms with coefficients of valuation strictly smaller than $p/(p-1)$. 
\end{proposition}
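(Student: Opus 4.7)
My plan is to exploit the identity $E_{s,p}(p\underline{a}) = G_{s,p}(\underline{a}) \cdot E_{s-1,p}(\underline{a})$ coming from the very definition of $G_{s,p}$ (with $E_{s-1,p}$ applied to the truncation of $\underline{a}$ to its first $s-1$ entries), together with the observation that, in the course of the inductive construction of the $E_i$, one of the two factors above is already a polynomial. Specifically, setting $\underline{a} = (Y_0', \dots, Y_{s-2}', 0)$, this gives
\begin{equation*}
E_{s,p}(p(Y_0', \dots, Y_{s-2}', 0)) = G_{s,p}((Y_0', \dots, Y_{s-2}', 0)) \cdot E_{s-1,p}((Y_0', \dots, Y_{s-2}')).
\end{equation*}

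The second factor admits a closed form: by the inductive step of Section \ref{secE_i} already carried out for $i = s-1$, we have $Z_{s-2} = E_{s-1,p}((Y_0', \dots, Y_{s-2}'))$, while the change of variables \eqref{eqnchangeofvarsZ} gives $Z_{s-2} = E_{s-1}(Y_0, \dots, Y_{s-3}) + \lambda Y_{s-2}$. Hence the second factor is the polynomial $E_{s-1}(Y_0, \dots, Y_{s-3}) + \lambda Y_{s-2}$, which has only finitely many nonzero coefficients. Since multiplication by a polynomial trivially preserves the property of having only finitely many coefficients of valuation below $p/(p-1)$, the claim reduces to the analogous statement for $G_{s,p}((Y_0', \dots, Y_{s-2}', 0))$.

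For the $G_{s,p}$ factor, I would invoke Lemma \ref{lemmaG}: since $G_{s,p}(\underline{a})$ belongs to $\mathbb{Z}_{(p)}[\zeta_{p^s}]\llbracket (\zeta_{p^s}-1)a_0, \dots, \lambda a_{s-1}\rrbracket$, the coefficient of each monomial $a_0^{n_0}\cdots a_{s-2}^{n_{s-2}}$ of $G_{s,p}((a_0, \dots, a_{s-2}, 0))$ has valuation at least $\sum_j n_j \nu(\zeta_{p^{s-j}}-1)$. Only finitely many multi-indices satisfy $\sum_j n_j \nu(\zeta_{p^{s-j}}-1) < p/(p-1)$, so the statement is clear when $G_{s,p}((a_0, \dots, a_{s-2}, 0))$ is viewed as a series in the formal variables $a_0, \dots, a_{s-2}$. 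I would then substitute $a_j = Y_j'$ and invoke the Tate-type condition on the $Y_j'$ (established inductively along with the construction of the $E_i$, in the spirit of Proposition \ref{propapproxG}(3) for the $X_j'$) to conclude that each of the finitely many small $a$-monomials yields, after substitution and expansion, only finitely many $Y$-monomials with coefficient of valuation below $p/(p-1)$.

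The main technical obstacle will be establishing the Tate condition for the $Y_j'$ as power series in $Y_0, \dots, Y_j$; this should follow from the formula in Proposition \ref{propcomputeY} by an induction parallel to Proposition \ref{propapproxG}, but requires careful bookkeeping of the cyclotomic denominators introduced by the iterated logarithms $\lambda^{-1}\ln(\cdot)$.
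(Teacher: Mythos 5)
Your proof takes essentially the same approach as the paper: the same decomposition $E_{s,p}(p\underline{a}) = E_{s-1,p}(\underline{a})\cdot G_{s,p}(\underline{a})$, the same recognition that the $E_{s-1,p}$ factor is the polynomial $E_{s-1}(Y_0,\dots,Y_{s-3}) + \lambda Y_{s-2}$, and the same appeal to Lemma \ref{lemmaG} together with inductive control of the $Y_j'$ for the $G_{s,p}$ factor. The ``technical obstacle'' you flag at the end is exactly what the paper resolves by invoking the induction hypothesis via Proposition \ref{propASWtype}.
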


\begin{proof}
    By definition, we have
    \begin{equation*}
    \begin{split}
        E_{s,p}(p(Y_0', \ldots, Y_{s-2}', 0)) &= E_{s-1,p}((Y_0', \ldots, Y_{s-2}')) \cdot G_{s,p}((Y_0', \ldots, Y_{s-2}',0))\\
        &= (E_{s-1}(Y_0, \ldots, Y_{s-3})+\lambda Y_{s-2}) \cdot G_{s,p}((Y_0', \ldots, Y_{s-2}',0)).
    \end{split}
    \end{equation*}
    The second equality is the defining relation $Z_{s-2}=E_{s-1,p}((Y_0', \ldots, Y_{s-2}'))$ together with the change of variables \eqref{eqnchangeofvarsZ} for $i=s-1$.
    The first factor meets the finiteness condition because it is a polynomial. For the second factor, Lemma~\ref{lemmaG} shows that
    \[
        G_{s,p}((Y_0', \ldots, Y_{s-2}',0))
        \in \mathbb{Z}_{(p)}[\zeta_{p^s}]\llbracket (\zeta_{p^s}-1)Y_0',\ldots,(\zeta_{p^2}-1)Y_{s-2}'\rrbracket .
    \]
    Since each $\zeta_{p^m}-1$ has positive valuation and $Y_0',\ldots,Y_{s-2}'$ have coefficients in $R$ (by the induction hypothesis in Proposition~\ref{propASWtype}), only finitely many monomials in this expansion can have coefficient valuation $<\frac{p}{p-1}$. The proposition follows.
\end{proof}

\begin{corollary}
    The series $E_{s,p}((Y_0', \ldots, Y_{s-2}', 0)) \in R\llbracket Y_0, \ldots, Y_{s-2}\rrbracket$ contains only finitely many terms with coefficients of valuation strictly smaller than $\frac{1}{p-1}$. 
\end{corollary}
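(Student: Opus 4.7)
The plan is to reduce the corollary directly to Proposition \ref{propfiniteEp} by invoking the multivariate ``$p$-th root'' result, Proposition \ref{propfinitesmallmulti}. Set $f := E_{s,p}((Y_0', \ldots, Y_{s-2}', 0)) \in R\llbracket Y_0, \ldots, Y_{s-2}\rrbracket$. The first step is to identify $f^p$ with the series treated in Proposition \ref{propfiniteEp}. Iterating the homomorphism property \eqref{eqnEscommutative} of $E_{s,p}$ with respect to Witt vector addition gives $E_{s,p}(\underline{a})^p = E_{s,p}(p \cdot \underline{a})$, where $p \cdot \underline{a}$ denotes multiplication by $p$ in the Witt vector ring. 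Consequently
\[
f^p = E_{s,p}(p(Y_0', \ldots, Y_{s-2}', 0)),
\]
and Proposition \ref{propfiniteEp} says this has only finitely many terms with coefficients of valuation strictly less than $p/(p-1)$.

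Next I would check that $f$ satisfies the hypotheses of Proposition \ref{propfinitesmallmulti}: constant term $1$ and a uniform positive lower bound on the valuations of the non-constant coefficients. The constant term reduces to $Y_j'(\underline{0}) = 0$ for every $j$, which is immediate from $Y_0' = \lambda^{-1}\ln(1 + \lambda Y_0)$ and, inductively, from the formulas in Proposition \ref{propcomputeY}, provided each auxiliary polynomial $E_j$ satisfies $E_j(\underline{0}) = 1$. This in turn is visible from the inductive construction in Proposition \ref{propASWtype}, since the $E_j$ are truncations of the power series $E_{j,p}((Y_0',\ldots,Y_{j-2}', 0))$, whose constant term is $1$. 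For the lower bound, Proposition \ref{propboundEi} gives that the non-constant coefficients of $E_{s,p}((X_0,\ldots,X_{s-1}))$, viewed as a formal power series in $s$ variables, have valuation at least $1/(p^s(p-1))$. Substitution of the $Y_j'$, each a power series with zero constant term, preserves this Gauss-valuation lower bound, so $f = 1 + \sum a_n \underline{Y}^n$ with $\nu(a_n) \ge 1/(p^s(p-1)) > 0$.

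Applying Proposition \ref{propfinitesmallmulti} to $f$ with $B := 1/(p^s(p-1))$ then yields exactly the statement that $f$ has only finitely many terms whose coefficients have valuation strictly less than $1/(p-1)$, which is the corollary.

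The main obstacle I anticipate is not analytical but notational: pinning down that $p \cdot \underline{a}$ in Proposition \ref{propfiniteEp} refers to Witt vector multiplication by $p$ (so that \eqref{eqnEscommutative} does supply the identity $f^p = E_{s,p}(p\underline{a})$), and tracking that each $Y_j'$ really has vanishing constant term so that the Gauss-valuation lower bound transfers through the substitution. Once these conventions are aligned, the remainder is routine bookkeeping.
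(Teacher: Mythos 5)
Your proposal is correct and follows the same route as the paper: the paper's proof is just the one-line citation of Propositions \ref{propboundEi}, \ref{propfinitesmallmulti}, and \ref{propfiniteEp}, and you have spelled out exactly how they combine — $f^p$ is the series of Proposition \ref{propfiniteEp} via \eqref{eqnEscommutative}, the hypotheses of Proposition \ref{propfinitesmallmulti} (constant term $1$ and a uniform positive lower bound on valuations) are supplied by the vanishing constant terms of the $Y_j'$ together with Proposition \ref{propboundEi}, and the conclusion follows.
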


\begin{proof}
    Set $f:=E_{s,p}((Y_0', \ldots, Y_{s-2}', 0))\in R\llbracket Y_0,\ldots,Y_{s-2}\rrbracket$. By Proposition~\ref{propboundEi}, $f$ satisfies the hypotheses of Proposition~\ref{propfinitesmallmulti}.
    Moreover, $f^p=E_{s,p}(p(Y_0', \ldots, Y_{s-2}', 0))$, and Proposition~\ref{propfiniteEp} shows that $f^p$ has only finitely many terms with coefficient valuation $<\frac{p}{p-1}$. Applying Proposition~\ref{propfinitesmallmulti} to $f$ yields the claim.
\end{proof}

\begin{remark}
    This result does not generally hold for $E_{s,p}((Y_0, \ldots, Y_{s-2},0))$. 
\end{remark}

Therefore, the following definition is well-defined.

\begin{definition}
\label{defnEi}
    Define $E_s(Y_0, \ldots, Y_{s-2}) \in \mathbb{Z}_{(p)}[\zeta_{p^s}][Y_0, \ldots, Y_{s-2}]$ as the polynomial obtained by truncating $E_{s,p}((Y_0', \ldots, Y_{s-2}', 0)) \in \mathbb{Z}_{(p)}[\zeta_{p^s}]\llbracket Y_0, \ldots, Y_{s-2}\rrbracket$ at terms whose coefficients have valuations strictly smaller than $\frac{1}{p-1}$.
\end{definition}

\subsubsection{Computing the Minimal Polynomial}

\begin{proposition}
\label{propY'}
    With the choice of $E_s(Y_0, \ldots, Y_{s-2})$ as in Definition \ref{defnEi}, the entry $Y_{s-1}'$ of the associated Witt vector has the form
    \begin{equation*}
        Y_{s-1}' = (-1)^{p-1}\frac{\lambda^{p-1}}{p} Y_{s-1}^p + Y_{s-1} + F_s(Y_0, \ldots, Y_{s-1}),
    \end{equation*}
    where $F_s(Y_0, \ldots, Y_{s-1}) \in \mathbb{Z}_{(p)}[\zeta_{p^{s}}] \llbracket Y_0, \ldots, Y_{s-1} \rrbracket$.
\end{proposition}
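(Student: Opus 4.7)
The plan is to start from the explicit formula
$$Y_{s-1}' = \frac{1}{\lambda}\ln\!\left(\frac{E_s(Y_0, \ldots, Y_{s-2}) + \lambda Y_{s-1}}{E_{s,p}((Y_0', \ldots, Y_{s-2}', 0))}\right)$$
supplied by Proposition \ref{propcomputeY}, and to isolate the $Y_{s-1}$-dependence via a clean multiplicative decomposition. Setting $E := E_{s,p}((Y_0', \ldots, Y_{s-2}', 0))$ and $D := E - E_s$, Definition \ref{defnEi} guarantees that every coefficient of $D$ has valuation at least $\tfrac{1}{p-1}$. I would apply the factorisation
$$\frac{E_s + \lambda Y_{s-1}}{E} \;=\; \frac{E_s}{E}\cdot\left(1 + \frac{\lambda Y_{s-1}}{E_s}\right),$$
which splits $Y_{s-1}'$ as $\tfrac{1}{\lambda}\ln(E_s/E) + \tfrac{1}{\lambda}\ln(1 + \lambda Y_{s-1}/E_s)$, with the first summand depending only on $Y_0, \ldots, Y_{s-2}$ and the second carrying all of the $Y_{s-1}$-dependence.

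Next, I would expand the second summand as the power series
$$\frac{1}{\lambda}\ln\!\left(1 + \tfrac{\lambda Y_{s-1}}{E_s}\right) \;=\; \sum_{k \geq 1} \frac{(-1)^{k-1}\lambda^{k-1}}{k}\cdot\frac{Y_{s-1}^k}{E_s^k}.$$
Since $E_s$ has constant term $1$, each reciprocal $1/E_s^k$ is a formal power series in $Y_0, \ldots, Y_{s-2}$ with constant term $1$ and coefficients in $\mathbb{Z}_{(p)}[\zeta_{p^s}]$. The $k=1$ summand then yields $Y_{s-1}$ plus a correction supported in products of $Y_0, \ldots, Y_{s-2}$ with $Y_{s-1}$, and the $k=p$ summand yields the asserted leading coefficient times $Y_{s-1}^p$ plus an analogous correction. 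All remaining summands, together with the first piece $\tfrac{1}{\lambda}\ln(E_s/E)$, would be absorbed into $F_s$.

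The main obstacle is verifying the integrality claim, namely that $F_s$ belongs to $\mathbb{Z}_{(p)}[\zeta_{p^s}]\llbracket Y_0, \ldots, Y_{s-1}\rrbracket$ despite the explicit factors of $1/\lambda$ and $1/k$ that appear throughout. The workhorse estimate is
$$\nu\!\left(\frac{\lambda^{k-1}}{k}\right) \;=\; \frac{k-1}{p-1} - \nu_p(k) \;\geq\; 0 \qquad \text{for every } k \geq 1,$$
which follows from the elementary inequality $p^n \geq n(p-1) + 1$ and handles every term in the second summand. For the first summand, rewriting $\tfrac{1}{\lambda}\ln(E_s/E) = -\tfrac{1}{\lambda}\sum_{k\geq 1}(D/E)^k/k$ and invoking the bound $\nu(D) \geq \tfrac{1}{p-1}$ from Definition \ref{defnEi} reduces the integrality to the same estimate, since $\nu(D^k/(k\lambda)) \geq (k-1)/(p-1) - \nu_p(k) \geq 0$. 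Combined with the integrality of $1/E$ and $1/E_s^k$ as formal power series in $Y_0, \ldots, Y_{s-2}$, this delivers $F_s$ with coefficients in $\mathbb{Z}_{(p)}[\zeta_{p^s}]$ as required.
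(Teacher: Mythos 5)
Your argument is correct and reaches the same conclusion via the same underlying estimates, but you organize the expansion differently. The paper writes $\frac{E_s + \lambda Y_{s-1}}{E} = 1 + \frac{S_s + \lambda Y_{s-1}}{E}$ (with $S_s := E_s - E = -D$) and expands $\ln(1+U)$ for the single quantity $U = \frac{S_s + \lambda Y_{s-1}}{E}$, extracting the two dominant monomials $Y_{s-1}$ and $(-1)^{p-1}\frac{\lambda^{p-1}}{p}Y_{s-1}^p$ and then declaring the rest to be $o(p^0)$ ``as in Proposition~\ref{propcomputeWittvectorF}.'' You instead factor $\frac{E_s + \lambda Y_{s-1}}{E} = \frac{E_s}{E}\bigl(1 + \frac{\lambda Y_{s-1}}{E_s}\bigr)$ so that $\ln$ splits $Y_{s-1}'$ into a $Y_{s-1}$-free piece and a piece whose $Y_{s-1}$-degree is indexed transparently by $k$ in $\sum_k \frac{(-1)^{k-1}\lambda^{k-1}}{k}\,Y_{s-1}^k/E_s^k$. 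This multiplicative split is a small but genuine improvement in exposition: the two monomials in the statement come out of the $k=1$ and $k=p$ terms with no bookkeeping, and the integrality of $F_s$ reduces visibly to the single inequality $\frac{k-1}{p-1} \ge \nu_p(k)$ applied twice (once to the $Y_{s-1}$-series, once to $-\frac{1}{\lambda}\sum_k (D/E)^k/k$), together with the integrality of $1/E$ and $1/E_s$ as power series with constant term $1$. The paper leaves most of this implicit by deferring to earlier proofs. One cosmetic note: the sign you produce, $(-1)^{p-1}\frac{\lambda^{p-1}}{p}$, matches the paper's proof and Proposition~\ref{propASWtype}, whereas the displayed statement of Proposition~\ref{propY'} has a typo $(-1)^p$; your version is the right one.
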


\begin{proof}
    The case $s=1$ is proved in Section \ref{secchangevars1}. For $s>1$, we have the following expansion from Proposition \ref{propcomputeY} for $Y_{s-1}'$:
    \begin{equation*}
    \begin{split}
        Y_{s-1}' &= \frac{1}{\lambda} \ln\left(\frac{E_s(Y_0, \ldots, Y_{s-2}) + \lambda Y_{s-1}}{E_{s,p}((Y_0', \ldots, Y_{s-2}',0))}\right) \\
        &= \frac{1}{\lambda} \ln \left( 1+ \frac{S_s + \lambda Y_{s-1}}{E_{s,p}((Y_0', \ldots, Y_{s-2}',0))} \right) \\
        &= \frac{1}{\lambda} \ln \left( 1 + \lambda Y_{s-1} + S_s + o\left(p^{1/(p-1)}\right)\right) \\
        &= Y_{s-1}+ (-1)^{p-1}\frac{\lambda^{p-1}}{p}Y_{s-1}^p+ \frac{S_s}{\lambda}+ (-1)^{p-1} \frac{S_s^p}{p\lambda}+o\left( p^0 \right)
    \end{split}
    \end{equation*}
    Here, $S_s := E_s(Y_0, \ldots, Y_{s-2}) - E_{s,p}((Y_0', \ldots, Y_{s-2}',0))$, which has valuation at least $\frac{1}{p-1}$ by definition. The rest follows similarly to the proof of Proposition \ref{propcomputeWittvectorF} and the case for $s=1$. This completes the proof.
\end{proof}

\begin{proposition}
\label{propminimialpoly}
The minimal polynomial of $Y_{s-1}$ over $\mathbb{Q}[\zeta_{p^s}](Y_0, \ldots, Y_{s-2}, X_{s-1})$ is
\begin{equation}
\label{eqnminimalpoly}
    \frac{\lambda^{p-1}}{p} Y_{s-1}^p + Y_{s-1} + H_s(Y_0, \ldots, Y_{s-1}) - (E_{s-1}(Y_0, \ldots, Y_{s-3}) + \lambda Y_{s-2})X_{s-1} = 0.
\end{equation}
In particular, $X_j$ lies in $\mathbb{Z}_{(p)}[\zeta_{p^{j+1}}] \left[Y_0, \ldots, Y_j, \frac{1}{1 + \lambda Y_0}, \ldots, \frac{1}{E_{j-1} + \lambda Y_{j-1}}\right]$ for $j = 0, \ldots, s-1$.
\end{proposition}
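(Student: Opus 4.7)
My plan is to derive \eqref{eqnminimalpoly} directly from the Kummer-type defining relation $Z_{s-1}^p = Z_{s-2}\bigl(G_s(X_0,\ldots,X_{s-2}) + p\lambda X_{s-1}\bigr)$ by substituting the change of variables $Z_{i-1} = E_i + \lambda Y_{i-1}$ for $i = s-1, s$. This yields the polynomial identity
\[
(E_s + \lambda Y_{s-1})^p = (E_{s-1} + \lambda Y_{s-2})\bigl(G_s + p\lambda X_{s-1}\bigr),
\]
which, once recast in the form of \eqref{eqnminimalpoly}, will automatically be the minimal polynomial of $Y_{s-1}$, as its degree $p$ matches that of the $\mathbb{Z}/p$-subextension at the top level of the tower.

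Next I would expand the left-hand side via the binomial theorem and divide through by $p\lambda$, isolating the $Y_{s-1}^p$, $Y_{s-1}^1$, and $Y_{s-1}^0$ contributions. The top term $\lambda^p Y_{s-1}^p$ produces $\tfrac{\lambda^{p-1}}{p} Y_{s-1}^p$; each intermediate $j$ with $1 \le j \le p-1$ carries a factor $\binom{p}{j}/p \in \mathbb{Z}_{(p)}$; and the linear coefficient $E_s^{p-1}$ I split as $1 + (E_s^{p-1}-1)$. Collecting every contribution except the three targeted terms $\tfrac{\lambda^{p-1}}{p}Y_{s-1}^p$, $Y_{s-1}$, and $(E_{s-1}+\lambda Y_{s-2})X_{s-1}$ into $H_s$ gives
\[
H_s = (E_s^{p-1}-1)Y_{s-1} + \sum_{j=2}^{p-1} \tfrac{\binom{p}{j}}{p} E_s^{p-j} \lambda^{j-1} Y_{s-1}^j + \tfrac{E_s^p - (E_{s-1}+\lambda Y_{s-2})G_s}{p\lambda}.
\]

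The main obstacle is showing that the last summand is integral, i.e., that $p\lambda$ divides $E_s^p - (E_{s-1}+\lambda Y_{s-2})G_s$ in the relevant Tate ring. The key input is the identity $E_{s,p}(\underline{a})^p = E_{s,p}(p\underline{a}) = E_{s-1,p}(\underline{a}) \cdot G_{s,p}(\underline{a})$, which follows from \eqref{eqnEscommutative} and the definition of $G_{s,p}$. Applied to $\underline{a} = (Y_0',\ldots,Y_{s-2}',0)$ and combined with Proposition \ref{propsimplifywitt} and the equality $Z_{s-2} = E_{s-1,p}((Y_0',\ldots,Y_{s-2}'))$, this gives
\[
E_{s,p}\bigl((Y_0',\ldots,Y_{s-2}',0)\bigr)^p = (E_{s-1}+\lambda Y_{s-2}) \cdot G_{s,p}\bigl((Y_0',\ldots,Y_{s-2}',0)\bigr).
\]
Replacing the power series by their polynomial truncations $E_s$ and $G_s$ introduces errors of Gauss valuation $\geq \tfrac{1}{p-1}$ and $> \tfrac{p}{p-1}$ respectively (Definition \ref{defnEi} and Proposition \ref{propapproxG}); a short bookkeeping of valuations — using that raising to the $p$-th power multiplies valuations by $p$, and the identification $X_j' = Y_j'$ coming from Matsuda's morphism being a function of the associated Witt vector alone — shows the resulting discrepancy has valuation at least $\tfrac{p}{p-1} = \nu(p\lambda)$.

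Finally, the "in particular" statement follows by induction on $s$. The base case $s=1$ is the explicit computation in Section \ref{secchangevars1}. For the inductive step, solving \eqref{eqnminimalpoly} linearly for $X_{s-1}$ yields
\[
X_{s-1} = \frac{1}{E_{s-1}(Y_0,\ldots,Y_{s-3}) + \lambda Y_{s-2}}\left(\frac{\lambda^{p-1}}{p}Y_{s-1}^p + Y_{s-1} + H_s\right),
\]
and the induction hypothesis places $X_0,\ldots,X_{s-2}$ (which enter $G_s$, and hence $H_s$) inside the ring $\mathbb{Z}_{(p)}[\zeta_{p^{s-1}}][Y_0,\ldots,Y_{s-2},\tfrac{1}{1+\lambda Y_0},\ldots,\tfrac{1}{E_{s-1}+\lambda Y_{s-2}}]$, so adjoining $\tfrac{1}{E_{s-1}+\lambda Y_{s-2}}$ delivers $X_{s-1}$ in the claimed ring.
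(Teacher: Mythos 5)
Your proposal is correct and follows essentially the same route as the paper: substitute $Z_{i-1} = E_i + \lambda Y_{i-1}$ into the Kummer relation, divide by $p\lambda$, use the Artin--Hasse identity $E_{s,p}(p\underline{a}) = E_{s-1,p}(\underline{a})\,G_{s,p}(\underline{a})$ together with the equality of associated Witt vectors $X_j' = Y_j'$, and control the truncation errors $E_s - E_{s,p}$ and $G_s - G_{s,p}$ by their valuations to conclude integrality. The one place you go slightly further than the paper is in writing $H_s$ out as an explicit finite sum via the binomial expansion, which makes the integrality check more transparent (each $\binom{p}{j}/p \in \mathbb{Z}_{(p)}$ and $E_s^{p-1}-1 \in R[Y_0,\dots,Y_{s-2}]$ are manifestly integral, isolating the only nontrivial term $\tfrac{E_s^p - (E_{s-1}+\lambda Y_{s-2})G_s}{p\lambda}$); the paper instead works at the level of power series and the $O(\cdot)$ bookkeeping, implicitly reaching the same conclusion. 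The inductive argument for the \emph{in particular} clause also matches the paper's.
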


\begin{proof}
Substituting $Z_{s-1} = E_s(Y_0, \ldots, Y_{s-2}) + \lambda Y_{s-1}$ into
\begin{equation*}
    Z_{s-1}^p = Z_{s-2}(G_s(X_0, \ldots, X_{s-2}) + p\lambda X_{s-1})
\end{equation*}
transforms the equation to
\begin{equation}
\label{eqnKummerexpansion}
\begin{split}
    \frac{1}{p\lambda} \big( (E_s(Y_0, \ldots, Y_{s-2}) + \lambda Y_{s-1})^p & - (E_{s-1}(Y_0, \ldots, Y_{s-3}) + \lambda Y_{s-2})G_s(X_0, \ldots, X_{s-2}) \big) \\
    &= X_{s-1} (E_{s-1}(Y_0, \ldots, Y_{s-3}) + \lambda Y_{s-2}).
\end{split}
\end{equation}
Let us analyze the left-hand side. By construction, we have
\begin{equation*}
    \begin{split}
        E_s(Y_0, \ldots, Y_{s-2}) &= E_{s,p}((Y_0', \ldots, Y_{s-2}',0)) + O\left( p^{1/(p-1)}\right), \\
        E_{s-1}(Y_0, \ldots, Y_{s-3}) + \lambda Y_{s-2} &= E_{s-1,p}((Y_0', \ldots, Y_{s-2}')), \\
        G_s(X_0, \ldots, X_{s-2}) &= G_{s,p}((X_0', \ldots, X_{s-2}',0)) + o\left( p^{p/(p-1)} \right).
    \end{split}
\end{equation*}
The first two expressions are series in $\mathbb{Z}_{(p)}[\zeta_{p^s}] \llbracket Y_0, \ldots, Y_{s-2} \rrbracket$, and the last is a series in $\mathbb{Z}_{(p)}[\zeta_{p^s}] \llbracket X_0, \allowbreak \ldots, X_{s-2} \rrbracket$. Additionally, for $j = 0, \ldots, s-1$, we have $X_j' = Y_j'$, and $X_j$ can be expressed as a polynomial in $\mathbb{Z}_{(p)}[\zeta_{p^{j+1}}] \left[Y_0, \ldots, Y_j, \frac{1}{1 + \lambda Y_0}, \ldots, \frac{1}{E_{j-1} + \lambda Y_{j-1}}\right]$ by induction. Thus, we obtain
\begin{equation*}
    G_s(X_0, \ldots, X_{s-2}) = G_{s,p}((Y_0', \ldots, Y_{s-2}',0)) + o\left( p^{p/(p-1)} \right)
\end{equation*}
as an element of $\mathbb{Z}_{(p)}[\zeta_{p^s}] \llbracket Y_0, \ldots, Y_{s-2} \rrbracket$. Recall also that 
\begin{equation*}
    E_{s,p}(p(Y_0, \ldots, Y_{s-2}, Y_{s-1})) = E_{s-1,p}((Y_0, \ldots, Y_{s-2})) \cdot G_{s,p}((Y_0, \ldots, Y_{s-2}, Y_{s-1})).
\end{equation*}
Therefore, the left-hand side of \eqref{eqnKummerexpansion} simplifies to
\begin{equation*}
    \begin{aligned}
        \text{LHS} &= \frac{1}{p\lambda} \bigg( \left(E_{s,p}((Y_0', \ldots, Y_{s-2}',0)) + O\left( p^{1/(p-1)} \right) + \lambda Y_{s-1} \right)^p  \\
                   &\quad - E_{s-1,p}((Y_0', \ldots, Y_{s-2}')) \left( G_{s,p}((Y_0', \ldots, Y_{s-2}',0)) + o\left( p^{p/(p-1)} \right) \right) \bigg) \\
                   &= \frac{\lambda^{p-1}}{p} Y_{s-1}^p + Y_{s-1} + O\left( p^{1/(p-1)} \right)
    \end{aligned}
\end{equation*}
as a series in $\mathbb{Z}_{(p)}[\zeta_{p^s}] \llbracket Y_0, \ldots, Y_{s-1} \rrbracket$. This forces the left-hand side to be a polynomial in
$\mathbb{Z}_{(p)}[\zeta_{p^{s}}] \big[Y_0, \ldots, Y_{s-1}, \frac{1}{1+\lambda Y_0}, \ldots, \frac{1}{E_{s-2}+\lambda Y_{s-2}}\big]$.
Thus the left-hand side can be written as
\[
\frac{\lambda^{p-1}}{p} Y_{s-1}^p + Y_{s-1} + H_s(Y_0,\ldots,Y_{s-1})
\]
for some $H_s$ in that ring. Furthermore, the left-hand side is a degree-$p$ expression in $Y_{s-1}$, and the Kummer equation
defines a degree-$p$ extension over $\mathbb{Q}[\zeta_{p^s}](Y_0, \ldots, Y_{s-2}, X_{s-1})$, so this polynomial is the minimal polynomial of $Y_{s-1}$.
Finally, for $j=s-1$ the defining equation \eqref{eqnminimalpoly} explicitly solves for $X_{s-1}$ by dividing by
$(E_{s-1}(Y_0,\ldots,Y_{s-3})+\lambda Y_{s-2})$, which is one of the inverted factors in the stated ring; the cases
$j\le s-2$ follow by induction as stated.
\end{proof}

\begin{proposition}
    The series $F_s$ from Proposition \ref{propY'} reduces to a polynomial in $\mathbb{F}_p[y_0, \ldots, y_{s-2}]$ modulo $(\zeta_{p^s} - 1)$. 
\end{proposition}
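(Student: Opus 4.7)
The plan is to reduce the defining identity $F_s = Y_{s-1}' - Y_{s-1} - (-1)^{p-1}(\lambda^{p-1}/p)Y_{s-1}^p$ of Proposition~\ref{propY'} modulo $\pi$ and compare with the reduction of the identity $H_s = (E_{s-1}+\lambda Y_{s-2})X_{s-1} - (\lambda^{p-1}/p)Y_{s-1}^p - Y_{s-1}$ extracted from Proposition~\ref{propminimialpoly}. The main arithmetic inputs are $\lambda \equiv 0 \pmod{\pi}$, $\lambda^{p-1}/p \equiv -1 \pmod{\pi}$ (see \cite[Sec.~2.4]{MR3167623}), and $\overline{E_i} \equiv 1 \pmod{\pi}$ for every $i \le s$; the last holds because each non-constant coefficient of $E_i$ lies in $\mathbb{Z}_{(p)}[\zeta_{p^i}]$ with positive $\pi$-adic valuation and hence in $\pi\mathbb{Z}_{(p)}[\zeta_{p^s}]$.

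Given the equality $\overline{Y_{s-1}'} = \overline{X_{s-1}}$ modulo $\pi$, subtracting the two defining formulas yields $\overline{F_s} - \overline{H_s} = -(1+(-1)^p)y_{s-1}^p$, which vanishes in every characteristic. This equality follows from Matsuda's Theorem~\ref{theoremMatsudamorphism}: both $(X_0', \ldots, X_{s-1}')$ and $(Y_0', \ldots, Y_{s-1}')$ represent the common Kummer class $Z_{s-1}^{p^s}$ and therefore coincide after substituting the $X_i$ as rational functions of the $Y$'s via Proposition~\ref{propminimialpoly}, and combining with Proposition~\ref{propcomputeWittvectorF} gives the required reduction. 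It then suffices to show that $\overline{H_s} \in \mathbb{F}_p[y_0, \ldots, y_{s-2}]$: expanding $(E_s+\lambda Y_{s-1})^p$ and regrouping, I would exhibit $H_s$ as the sum of $(E_s^{p-1}-1)Y_{s-1}$, the binomial-coefficient terms $(\binom{p}{k}/p)\lambda^{k-1}E_s^{p-k}Y_{s-1}^k$ for $2 \le k \le p-1$, and $(E_s^p - (E_{s-1}+\lambda Y_{s-2})G_s)/(p\lambda)$; the first two groups vanish modulo $\pi$ by the arithmetic inputs, leaving only the last term, which depends only on $Y_0, \ldots, Y_{s-2}$ and reduces to the Artin-Schreier-Witt polynomial $f_s(y_0, \ldots, y_{s-2})$ by the preceding proposition identifying the special fiber as the Artin-Schreier-Witt cover.

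The hard part will be the identification $(X_0', \ldots, X_{s-1}') = (Y_0', \ldots, Y_{s-1}')$ after the change of variables, since Matsuda's theorem a priori gives equality only modulo the kernel of $\Psi_s$. I would resolve this by induction on $s$, verifying directly that the inverse-logarithm recursions of Propositions~\ref{propcomputeX} and~\ref{propcomputeY} produce the same power series under the substitution of Proposition~\ref{propminimialpoly}, with the functional equation $E_{s,p}(p\underline{a}) = E_{s-1,p}(\underline{a})\cdot G_{s,p}(\underline{a})$ matching the two recursions at each inductive step.
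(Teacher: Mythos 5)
Your proposal takes essentially the same route as the paper: use the identification $Y_{s-1}'=X_{s-1}'$ together with Proposition~\ref{propcomputeWittvectorF} to reduce the question about $F_s$ to one about $H_s$ from \eqref{eqnminimalpoly}, then expand $(E_s+\lambda Y_{s-1})^p$ and observe that everything involving $Y_{s-1}$ dies modulo $\pi$ because $\lambda\equiv 0$, $\lambda^{p-1}/p\equiv -1$, and $\overline{E_i}=1$. The two arguments differ only in bookkeeping. The paper relates $F_s$ to $H_s$ by asserting a multiplicative factorization $Y_{s-1}'=X_{s-1}H_{s-1}$ with $H_{s-1}$ a unit power series, then kills $H_{s-1}^{-1}$ and $E_{s-1}+\lambda Y_{s-2}$ modulo $\pi$; you instead subtract the two defining identities directly modulo $\pi$ and check that the discrepancy $((-1)^{p-1}-1)\,y_{s-1}^p$ vanishes in residue characteristic $p$. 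Your route is a bit cleaner and avoids the factorization step, which is in fact delicate: $X_{s-1}'$ does not vanish at $X_{s-1}=0$ in general (its value there is $\frac{1}{p\lambda}\ln\bigl(G_s/G_{s,p}\bigr)$, which is nonzero whenever the truncation $G_s\neq G_{s,p}$), so the paper's $X_{s-1}H_{s-1}$ claim would need to be replaced by an additive statement anyway. You also rightly flag that the identification $(X_0',\ldots,X_{s-1}')=(Y_0',\ldots,Y_{s-1}')$ is not automatic from Matsuda's theorem and requires matching the two logarithmic recursions inductively using $E_{s,p}(p\underline a)=E_{s-1,p}(\underline a)\,G_{s,p}(\underline a)$; the paper takes this for granted.

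Two small points to tighten. First, to conclude that $\overline{H_s}$ (hence $\overline{F_s}$) is a \emph{polynomial} rather than merely an element of the reduced function field, you should say explicitly that $H_s$ lies in the Laurent polynomial ring $\mathbb{Z}_{(p)}[\zeta_{p^s}]\bigl[Y_0,\ldots,Y_{s-1},\tfrac{1}{1+\lambda Y_0},\ldots,\tfrac{1}{E_{s-1}+\lambda Y_{s-2}}\bigr]$ by Proposition~\ref{propminimialpoly}, and that all the inverted factors reduce to $1$ modulo $\pi$; combined with the fact that your remaining term $\bigl(E_s^p-(E_{s-1}+\lambda Y_{s-2})G_s\bigr)/(p\lambda)$ does not involve $Y_{s-1}$, this lands the reduction in $\mathbb{F}_p[y_0,\ldots,y_{s-2}]$. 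Second, the final sentence overclaims: as Corollary~\ref{corspecialequation} shows, the reduction is $f_s(y_0,y_1+b_1,\ldots,y_{s-2}+b_{s-2})+\wp(b_{s-1})$, not literally $f_s(y_0,\ldots,y_{s-2})$. This does not affect the assertion being proved, but the appeal to ``the preceding proposition'' (which only gives birationality of the special fiber) does not on its own pin down which polynomial appears.
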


\begin{proof}
    Set $\pi:=\zeta_{p^s}-1$ and $u:=\frac{\lambda^{p-1}}{p}(-1)^p$.
    By Proposition~\ref{propY'} and the identity $Y_{s-1}'=X_{s-1}'$, we have
    \[
    -uY_{s-1}^p+Y_{s-1}+F_s=X_{s-1}'.
    \]
    By Proposition~\ref{propcomputeWittvectorF}, $X_{s-1}'\equiv X_{s-1}\pmod{\pi}$, hence
    \begin{equation}
    \label{eqnFsmodpi}
    -u y_{s-1}^p+y_{s-1}+\overline{F}_s=x_{s-1}.
    \end{equation}

    On the other hand, reducing \eqref{eqnminimalpoly} modulo $\pi$ and using
    \[
    E_{s-1}(Y_0,\ldots,Y_{s-3})+\lambda Y_{s-2}\equiv 1\pmod{\pi},
    \]
    we obtain
    \begin{equation}
    \label{eqnHsmodpi}
    -u y_{s-1}^p+y_{s-1}+\overline{H}_s=x_{s-1}.
    \end{equation}
    Comparing \eqref{eqnFsmodpi} and \eqref{eqnHsmodpi} gives $\overline{F}_s=\overline{H}_s$.

    It remains to show $\overline{H}_s\in\mathbb{F}_p[y_0,\ldots,y_{s-2}]$. From \eqref{eqnKummerexpansion},
    \[
    (E_s(Y_0,\ldots,Y_{s-2})+\lambda Y_{s-1})^p
    =\lambda^pY_{s-1}^p+p\lambda Y_{s-1}E_s(Y_0,\ldots,Y_{s-2})+E_s^p+o\!\left(p^{p/(p-1)}\right),
    \]
    where the omitted mixed terms have valuation $\ge p/(p-1)$. After dividing by $p\lambda$ and reducing modulo $\pi$, these omitted terms vanish, and $E_s(Y_0,\ldots,Y_{s-2})\equiv1\pmod{\pi}$ by Definition~\ref{defnEi}. Therefore $\overline{H}_s$ is a polynomial in $y_0,\ldots,y_{s-2}$, so the same holds for $\overline{F}_s$.
\end{proof}

This completes the proof of Proposition~\ref{propASWtype}.

In fact, we can provide further details on the reduction of $F_{s}$ and $H_s$ in \eqref{eqnminimalpoly}.

\begin{corollary}
\label{corspecialequation}
    We have, for $j=1, \ldots, s$,
    \begin{equation*}
       H_j \equiv F_j \equiv f_j(y_0, y_1+b_1, \ldots, y_{j-2}+b_{j-2}) + \wp(b_{j-1}) \pmod{(\zeta_{p^j}-1)},
\end{equation*}
where $f_j$ is the polynomial in \eqref{eqnsystemAS} and $b_{j-1} \in \mathbb{F}_p[y_0, \ldots, y_{j-2}]$. In particular, the special fiber of $\mathcal{W}_s \rightarrow \mathcal{V}_s$ is birational to the cover of $\proj k[x_0, \ldots, x_{s-1}, y]$ defined by
    \begin{equation*}
        \wp(y_0, y_1+b_1, \ldots, y_{s-1}+b_{s-1}) = (x_0, x_1, \ldots, x_{s-1}).
    \end{equation*}
\end{corollary}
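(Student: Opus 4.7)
The plan is induction on $j$. The base case $j=1$ is treated explicitly in Section \ref{secchangevars1}: here $f_1 = 0$ and the choice $b_0 = 0$ matches the computation $Y_0' \equiv Y_0 - Y_0^p \pmod{\lambda}$, giving $\bar F_1 \equiv \bar H_1 \equiv 0$.

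For the inductive step, I would first extract a normal form for the reduced minimal polynomial \eqref{eqnminimalpoly}. Reducing modulo $(\zeta_{p^j}-1)$ and using $\lambda^{p-1}/p \equiv -1 \pmod{\lambda}$, the vanishing of $\lambda$, and the observation that each previously constructed $E_i$ reduces to its constant term $1$ (its non-constant coefficients lie in $(\zeta_{p^i}-1) \subseteq (\zeta_{p^j}-1)$ for $i < j$), the defining relation collapses to $\wp(y_{j-1}) + \bar H_j = x_{j-1}$, where $\wp(y) = y - y^p$. Combining this with the auxiliary identity $-u Y_{j-1}^p + Y_{j-1} + F_j = X_{j-1} H_{j-1}$ extracted from the proof of the preceding proposition, together with the established $\bar H_{j-1} \equiv 1$, yields the congruence $\bar H_j \equiv \bar F_j \pmod{(\zeta_{p^j}-1)}$.

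To identify this common value with $f_j(y_0, y_1+b_1, \ldots, y_{j-2}+b_{j-2}) + \wp(b_{j-1})$, I would invoke the induction hypothesis: for $j' < j$, the reduced relation $\wp(y_{j'-1}) + \bar F_{j'}(y_0, \ldots, y_{j'-2}) = x_{j'-1}$ becomes, under the substitution $\tilde y_i := y_i + b_i$, the standard Artin-Schreier relation $\wp(\tilde y_{j'-1}) + f_{j'}(\tilde y_0, \ldots, \tilde y_{j'-2}) = x_{j'-1}$ from \eqref{eqnsystemAS}. Hence $\kappa(y_0, \ldots, y_{j-2}) = \kappa(\tilde y_0, \ldots, \tilde y_{j-2})$, and the level-$j$ layer of the Kummer-constructed cover over this field must coincide with the ASW extension associated to the Witt-vector component $x_{j-1}$. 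Writing $\tilde y_{j-1} = y_{j-1} + b_{j-1}$ and using the $\mathbb{F}_p$-linearity of $\wp$ gives the desired identity, provided such a polynomial $b_{j-1}$ exists.

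The main obstacle is upgrading $b_{j-1}$ from an a priori element of $\kappa(y_0, \ldots, y_{j-2})$ to a polynomial in $\mathbb{F}_p[y_0, \ldots, y_{j-2}]$. Since $\bar F_j$ and the shifted $f_j$ both lie in this polynomial ring (by the preceding proposition and the induction hypothesis respectively), their difference $\wp(b_{j-1})$ is a polynomial, and a standard coprimality argument shows that any polynomial lying in $\wp(\kappa(y_0, \ldots, y_{j-2}))$ admits a polynomial preimage under $\wp$ (a rational preimage with nontrivial denominator would, after raising to the $p$-th power, be incompatible with the coprimality of numerator and denominator). The final birational equivalence assertion then follows by applying the substitution $y_i \mapsto y_i + b_i$ uniformly across all levels $i = 1, \ldots, s-1$, transforming the collection of reduced equations into the ASW system $\wp(y_0, y_1+b_1, \ldots, y_{s-1}+b_{s-1}) = (x_0, \ldots, x_{s-1})$.
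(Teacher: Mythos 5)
Your proposal is correct and matches the paper's strategy: induct on $j$, compare the reduced minimal-polynomial relation $\wp(y_{j-1}) + \bar{H}_j = x_{j-1}$ with the Artin--Schreier--Witt tower for $(x_0, \ldots, x_{s-1})$ rewritten in the shifted variables $\tilde{y}_i = y_i + b_i$ supplied by the induction hypothesis, and read off $\bar{H}_j = f_j(\tilde{y}_0, \ldots, \tilde{y}_{j-2}) + \wp(b_{j-1})$. You do spell out two details the paper leaves terse: the rederivation of $\bar{H}_j \equiv \bar{F}_j$ (which is actually already established in the proof of the unnumbered proposition immediately preceding the corollary) and the coprimality argument promoting $b_{j-1}$ from a rational function to a polynomial, which the paper asserts without justification.
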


\begin{proof}
    Due to \S \ref{secchangevars1}, the corollary holds for $s=1$.
    
    For $s>1$, recall that the special fiber of $\mathcal{W}_s \rightarrow \mathcal{V}_s$ is birational to the extension of $\mathbb{F}_p[x_0, \ldots, x_{s-1}]$ associated with the Witt vector $(x_0, \ldots, x_{s-1})$, which can be defined as a system of Artin--Schreier extensions as in \eqref{eqnsystemAS}. By induction, the last of these extensions is given by
    \begin{equation*}
        y_{s-1} - y_{s-1}^p + f_{s}(y_0, y_1+b_1, \ldots, y_{s-2}+b_{s-2}) + \wp(b_{s-1}) = x_{s-1},
    \end{equation*}
    where $b_{j-1} \in \mathbb{F}_p[y_0, \ldots, y_{j-2}]$ for $j=2,\ldots,s-1$ by the induction hypothesis, and \emph{a priori} we may take
    $b_{s-1} \in \mathbb{F}_p[y_0, \ldots, y_{s-2}, x_{s-1}]$. On the other hand, by \eqref{eqnminimalpoly}, we have
    \begin{equation*}
        y_{s-1} - y_{s-1}^p + \overline{H}_s = x_{s-1},
    \end{equation*}
    where $\overline{H}_s$ denotes the reduction of $H_s$ modulo $(\zeta_{p^s}-1)$. Comparing with the previous displayed equation (and cancelling the common terms $y_{s-1}-y_{s-1}^p$ and $x_{s-1}$), we obtain
\[
  \overline{H}_s
  =
  f_{s}(y_0, y_1+b_1, \ldots, y_{s-2}+b_{s-2})
  + \wp(b_{s-1}).
\]
In particular, $\wp(b_{s-1}) \in \mathbb{F}_p[y_0,\ldots,y_{s-2}]$, so we may (and do) choose $b_{s-1}\in \mathbb{F}_p[y_0,\ldots,y_{s-2}]$.
\end{proof}

\begin{example}
    When $p=2$, $\overline{H}_2= y_0^2-y_0^3+ y_0^4-y_0^8=f_2(y_0)+ \wp(y_0^4)$ if we choose $G_2(X_0) = 1 + 2\zeta_4 X_0 - 52X_0^4$. This is the lowest degree polynomial for $G_2$ that we can select.
\end{example}

\section{Smoothness of the Integral Closure}
\label{secintclosure}
In this section, we show that $\mathcal{W}_s$ has good reduction over $R := \mathbb{Z}_{(p)}[\zeta_{p^s}]$. Set $\pi := \zeta_{p^s}-1$.

\subsection{The case \texorpdfstring{$s=1$}{s=1}}
\label{secexplicit1}
The cover $\mathcal{W}_1$ is defined by the diagram in Figure~\ref{fig:construction_W1}, which records the relationships between $\mathcal{W}_1$, $\mathcal{V}_1$, and their function fields.
\begin{figure}[ht]
    \centering
    \begin{tikzcd}
        \mathcal{W}_1 \arrow[hookrightarrow]{r} \ar[d] &  \mathcal{Y}_1 \ar[r] \ar[d] & \operatorname{Spec} L_1 \ar[d] \\ 
        \mathcal{V}_1 \arrow[hookrightarrow]{r}  &  \mathbb{P}^1_R \ar[r] & \operatorname{Spec} K(X_0)
    \end{tikzcd}
    \caption{Construction of $\mathcal{W}_1$}
    \label{fig:construction_W1}
\end{figure}
Recall that $L_1/K(X_0)$ is the Kummer extension given by 
\begin{equation}
\label{eqnKummer1}
    Z_0^p = 1 + p\lambda X_0.
\end{equation}
That is, the coordinate ring of $\mathcal{W}_1$ is the integral closure of that of $\mathcal{V}_1$ in $L_1$. Substituting $Z_0=1+ \lambda Y_0$ into \eqref{eqnKummer1} yields
\begin{equation}
    \label{eqnASWtype1}
   Z_0^p= 1 + \lambda^p Y_0^p + p\lambda Y_0 + \sum_{i=2}^{p-1} \binom{p}{i} (\lambda Y_0)^i = 1 + p\lambda X_0,
\end{equation}
which simplifies to
\begin{equation}
    \label{eqnASWtype1simplified}
    S_0(Y_0):=Y_0 + \frac{\lambda^{p-1}}{p} Y_0^p + \frac{1}{p} \sum_{i=2}^{p-1} \binom{p}{i} \lambda^{i-1} Y_0^i =X_0, 
\end{equation}
and therefore reduces modulo $\lambda$ to
\begin{equation*}
    y_0 - y_0^p = x_0.
\end{equation*}

\begin{proposition}
\label{propintclosure1}
    $R \left[ Y_0, \frac{1}{1+\lambda Y_0} \right]$ is the integral closure of $R\left[X_0, \frac{1}{1+p\lambda X_0} \right]$ in $L_1$.
\end{proposition}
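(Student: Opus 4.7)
The plan is to prove the two containments separately using Zariski--Nagata-style reasoning: that $R[Y_0, 1/(1+\lambda Y_0)]$ is normal with the correct fraction field and is integral over $R[X_0, 1/(1+p\lambda X_0)]$, and conversely that any element of the integral closure must lie in it by normality.

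First I would verify the inclusion $R[Y_0, 1/(1+\lambda Y_0)] \subseteq \overline{R[X_0, 1/(1+p\lambda X_0)]}^{L_1}$. The key observation is that $\nu(\lambda^{p-1}) = (p-1)/(p-1) = 1 = \nu(p)$, so $\lambda^{p-1}/p$ is a unit in $R$. Rescaling \eqref{eqnASWtype1simplified} by $p/\lambda^{p-1}$, one obtains an equation
\begin{equation*}
    Y_0^p + \sum_{i=2}^{p-1} \frac{\binom{p}{i}}{\lambda^{p-i}} Y_0^i + \frac{p}{\lambda^{p-1}} Y_0 - \frac{p}{\lambda^{p-1}} X_0 = 0,
\end{equation*}
whose coefficients all lie in $R$ because $\nu(\binom{p}{i}/\lambda^{p-i}) = 1 - (p-i)/(p-1) = (i-1)/(p-1) \ge 0$ for $2 \le i \le p-1$. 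Thus $Y_0$ is integral over $R[X_0]$. For the localization element, note that $(1+\lambda Y_0)^p = Z_0^p = 1 + p\lambda X_0$, so $(1+\lambda Y_0)^{-1}$ satisfies the monic relation $T^p - (1+p\lambda X_0)^{-1} = 0$ over $R[X_0, 1/(1+p\lambda X_0)]$, hence is integral as well.

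For the reverse inclusion, I would argue as follows. The ring $R[Y_0, 1/(1+\lambda Y_0)]$ is the localization of the polynomial ring $R[Y_0]$ at $1+\lambda Y_0$, hence is smooth (in particular regular, in particular normal) over $R$. Its fraction field contains $Y_0$, hence $X_0 = S_0(Y_0)$, so it contains $K(X_0)(Y_0)$; since $Y_0$ satisfies the degree-$p$ minimal polynomial of $Z_0$ (up to the affine substitution $Z_0 = 1+\lambda Y_0$) and $[L_1 : K(X_0)] = p$, this fraction field equals $L_1$. By the first part, $R[Y_0, 1/(1+\lambda Y_0)]$ contains $R[X_0, 1/(1+p\lambda X_0)]$. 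Therefore, any $\alpha \in L_1$ integral over $R[X_0, 1/(1+p\lambda X_0)]$ is a fortiori integral over $R[Y_0, 1/(1+\lambda Y_0)]$, and by normality must lie in it.

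The main technical point, and the part driving the definition of the change of variables, is the unit property of $\lambda^{p-1}/p$, which is exactly what rescues integrality of $Y_0$ from a polynomial whose leading coefficient is $\lambda^{p-1}/p$; everything else is routine. A secondary subtlety is checking that $\{1, Y_0, \ldots, Y_0^{p-1}\}$ generates $L_1$ over $K(X_0)$, which is automatic once one observes the degree match. I do not anticipate genuine obstacles beyond bookkeeping of these valuation computations.
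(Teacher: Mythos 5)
Your proposal is correct and follows essentially the same route as the paper's proof: establish that $B_1 := R[Y_0, 1/(1+\lambda Y_0)]$ is normal with fraction field $L_1$, show $Y_0$ and $1/(1+\lambda Y_0)$ are integral over $A := R[X_0, 1/(1+p\lambda X_0)]$ via \eqref{eqnASWtype1simplified} and $(1+\lambda Y_0)^{-p}=(1+p\lambda X_0)^{-1}$, and use $A \subseteq B_1$ plus normality of $B_1$ for the reverse containment. You are somewhat more explicit than the paper in pinning down the key valuation fact that $\lambda^{p-1}/p$ is a unit (so that rescaling \eqref{eqnASWtype1simplified} yields a monic polynomial with $R[X_0]$-coefficients), which the paper absorbs into ``straightforward.''
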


\begin{proof}
It is straightforward to show that $A := R\left[X_0, \frac{1}{1 + p\lambda X_0}\right]$ (resp. $B_1 := R\left[Y_0, \frac{1}{1 + \lambda Y_0}\right]$) is integrally closed in $\Frac A = K(X_0)$ (resp. $\Frac B_1 = K(Y_0) = L_1$), hence normal. From \eqref{eqnASWtype1simplified} and
\begin{equation*}
\frac{1}{Z_0^p} = \left( \frac{1}{1 + \lambda Y_0} \right)^p = \frac{1}{1 + p\lambda X_0},
\end{equation*}
we see that $Y_0$ and $\frac{1}{1 + \lambda Y_0}$ are integral over $A$. Hence, $B_1$ is contained in the closure of $A$ in $K(Y_0)$. Additionally, \eqref{eqnASWtype1simplified} shows that $X_0$ can be expressed as a polynomial in $B_1$. Therefore, the canonical inclusion $K(X_0) \hookrightarrow K(Y_0)$ induces $A \hookrightarrow B_1$. Thus, as $B_1$ is normal, it contains the integral closure of $A$ in $K(Y_0)$. Therefore, $B_1$ is the integral closure of $A$ in $L_1$.
\end{proof}

Thus, $\mathcal{W}_1 = \operatorname{Spec} R \left[Y_0, \frac{1}{1+\lambda Y_0}\right]$ has good reduction, with special fiber $\operatorname{Spec} \mathbb{F}_p[y_0]$. Moreover, by \eqref{eqnKummer1} and Kummer theory, the cover $\mathcal{Y}_1 \rightarrow \mathbb{P}^1_R$ is generically branched at $X_0 = \infty$ and $X_0 = -\frac{1}{p\lambda}$. Therefore, $\mathcal{W}_1 \rightarrow \mathcal{V}_1$ is {\'e}tale with Galois group $\mathbb{Z}/p$, and its special fiber is isomorphic to the Artin--Schreier extension $\operatorname{Spec} \mathbb{F}_p[y_0] \rightarrow \operatorname{Spec} \mathbb{F}_p[x_0]$ given by $-y_0^p + y_0 = x_0$.

\subsection{The case \texorpdfstring{$s > 1$}{s>1}}
Consider Figure~\ref{fig:construction_Ws}, which extends Figure~\ref{fig:construction_W1} and shows the step-by-step construction of $\mathcal{W}_s$.
\begin{figure}[ht]
    \centering
    \begin{tikzcd}
        \mathcal{W}_s \arrow[hookrightarrow]{r} \ar[d] & \mathcal{Y}_s \ar[r] \ar[d] & \operatorname{Spec} L_s \ar[d] \\
        \mathcal{W}_{s-1} \arrow[hookrightarrow]{r} \ar[d] & \mathcal{Y}_{s-1} \ar[r] \ar[d] & \operatorname{Spec} L_{s-1} \ar[d] \\
        \vdots \ar[d] & \vdots  \ar[d] & \vdots \ar[d] \\
        \mathcal{W}_{1} \arrow[hookrightarrow]{r} \ar[d] & \mathcal{Y}_{1} \ar[r] \ar[d] & \operatorname{Spec} L_{1} \ar[d] \\
        \mathcal{V}_s \arrow[hookrightarrow]{r} & \mathbb{P}^s_R \ar[r] & \operatorname{Spec} K(X_0, X_1, \ldots, X_{s-1})
    \end{tikzcd}
    \caption{Construction of $\mathcal{W}_s$}
    \label{fig:construction_Ws}
\end{figure}

Here we set $\mathbb{P}^s_R = \operatorname{Proj} R[X_0, X_1, \ldots, X_{s-1}, Y]$ and
\[
\mathcal{O}_{\mathcal{V}_s} = R \left[X_0, \ldots, X_{s-1}, \frac{1}{1 + p\lambda X_0},  \ldots, \frac{1}{G_{s}(X_0, \ldots, X_{s-2}) + p\lambda X_{s-1}}  \right],
\]
where the polynomials $G_2,\ldots,G_s$ are computed in Section \ref{secconstructW}, and $L_{i+1}/L_{i}$ is given by
\begin{equation}
\label{eqnKummer2}
Z_i^p = Z_{i-1} \left(G_{i+1}(X_0, \ldots, X_{i-1}) + p\lambda X_i \right).
\end{equation}
Recall from Proposition \ref{propminimialpoly} that the change of variables $Z_{i-1} = E_{i}(Y_0, \ldots, Y_{i-2}) + \lambda Y_{i-1}$, given in Section \ref{secE_i}, leads to the relations
$$X_{i-1} = S_{i-1}(Y_0, \ldots, Y_{i-1}),$$
where $S_{i-1}$ is an Artin--Schreier--Witt type polynomial similar to \eqref{eqnASWtype1simplified}. For $i\ge 2$, it lies in the ring
\begin{equation*}
    R \left[ Y_0, \ldots, Y_{i-1}, \frac{1}{1+\lambda Y_0}, \ldots, \frac{1}{E_{i-1}(Y_0, \ldots, Y_{i-3}) + \lambda Y_{i-2}} \right],
\end{equation*}
for $i=2, \ldots, s-1$. Following the same process as in Section~\ref{secexplicit1}, one shows that $\mathcal{W}_1 = \operatorname{Spec} B_1$, where $B_1$ is given by
\begin{equation*}
    R \left[ Y_0, X_1,  \ldots, X_{s-1}, \frac{1}{1 + \lambda Y_0}, \frac{1}{G_2(S_0)+p \lambda X_1}, \ldots, \frac{1}{G_{s}(S_0, \ldots, X_{s-2}) + p\lambda X_{s-1}}  \right]
\end{equation*}
Here $\mathcal{W}_1$ denotes the first stage in the tower over $\mathcal{V}_s$ (i.e.\ the pullback of the $s=1$ construction along the natural projection $\mathcal{V}_s \to \mathcal{V}_1$).

Assume the induction hypothesis that $\mathcal{W}_{s-1} = \spec B_{s-1}$, with $B_{s-1}$ given by
\begin{equation*}
R \left[Y_0, \ldots, Y_{s-2}, X_{s-1}, \frac{1}{1 + \lambda Y_0}, \ldots, \frac{1}{E_{s-1}(Y_0, \ldots, Y_{s-3}) + \lambda Y_{s-2}}, \frac{1}{G_s(S_0, \ldots, S_{s-2}) + p \lambda X_{s-1}} \right],
\end{equation*}
and the polynomials $S_j(Y_0,\ldots,Y_j)$ are as defined above.

\begin{proposition}
\label{propintclosure}
    The integral closure of $B_{s-1}$ in $L_s$ is
    \begin{equation*}
    B_s = R \left[Y_0, \ldots, Y_{s-1}, \frac{1}{1 + \lambda Y_0}, \ldots, \frac{1}{E_{s}(Y_0, \ldots, Y_{s-2}) + \lambda Y_{s-1}}  \right].
\end{equation*}
\end{proposition}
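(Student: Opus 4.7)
The plan is to mirror the base case treated in Proposition \ref{propintclosure1}: establish that $B_s$ is normal with fraction field $L_s$, verify the inclusion $B_{s-1}\subseteq B_s$, and check that each generator of $B_s$ over $B_{s-1}$ is integral over $B_{s-1}$. Together these force both inclusions $\overline{B_{s-1}}\subseteq B_s$ and $B_s\subseteq \overline{B_{s-1}}$.

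For normality I would observe that $B_s$ is a localization of the polynomial ring $R[Y_0,\ldots,Y_{s-1}]$ at the elements $1+\lambda Y_0,\,E_2(Y_0)+\lambda Y_1,\,\ldots,\,E_s(Y_0,\ldots,Y_{s-2})+\lambda Y_{s-1}$. Because $R=\mathbb{Z}_{(p)}[\zeta_{p^s}]$ is a discrete valuation ring, this polynomial ring is regular, and so is its localization; in particular $B_s$ is integrally closed in $\Frac(B_s)$. The identification $\Frac(B_s)=L_s$ then follows inductively from $Z_i=E_{i+1}(Y_0,\ldots,Y_{i-1})+\lambda Y_i$, which lets one write each $Y_i$ and each $Z_i$ in terms of the other.

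Next I would verify $B_{s-1}\subseteq B_s$. Proposition \ref{propminimialpoly} already writes $X_{s-1}$ as an element of $B_s$. The only other generator of $B_{s-1}$ to worry about is $(G_s(X_0,\ldots,X_{s-2})+p\lambda X_{s-1})^{-1}$, and the Kummer relation $Z_{s-1}^p=Z_{s-2}(G_s+p\lambda X_{s-1})$ rewrites it as $(E_{s-1}+\lambda Y_{s-2})/(E_s+\lambda Y_{s-1})^p\in B_s$. For the reverse direction, the only new generators of $B_s$ over $B_{s-1}$ are $Y_{s-1}$ and $(E_s+\lambda Y_{s-1})^{-1}$. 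The latter is immediate: its $p$-th power equals $(E_{s-1}+\lambda Y_{s-2})^{-1}(G_s+p\lambda X_{s-1})^{-1}\in B_{s-1}^\times$, giving the monic relation $T^p-(E_s+\lambda Y_{s-1})^{-p}=0$ over $B_{s-1}$.

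The main technical obstacle, and the only delicate step, is integrality of $Y_{s-1}$ over $B_{s-1}$: the minimal polynomial \eqref{eqnminimalpoly} is not already monic, since its leading coefficient is $\lambda^{p-1}/p$. The key observation is that this scalar is a \emph{unit} in $R$ because $\nu(\lambda^{p-1})=1=\nu(p)$. Multiplying \eqref{eqnminimalpoly} by the unit $p/\lambda^{p-1}\in R^\times$ produces a monic polynomial of degree $p$ in $Y_{s-1}$; the remaining coefficients differ, by this same unit, from the $Y_{s-1}$-coefficients of the left-hand side of \eqref{eqnKummerexpansion}, which the proof of Proposition \ref{propminimialpoly} places in $B_{s-1}[Y_{s-1}]$. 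Hence all coefficients of the rescaled equation lie in $B_{s-1}$, exhibiting $Y_{s-1}$ as integral over $B_{s-1}$. Combined with normality of $B_s$, this yields the equality $B_s=\overline{B_{s-1}}$ inside $L_s$.
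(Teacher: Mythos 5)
Your proposal is correct and follows essentially the same route as the paper: use the change of variables $Z_{s-1}=E_s+\lambda Y_{s-1}$, establish normality of $B_s$, verify that $Y_{s-1}$ and $(E_s+\lambda Y_{s-1})^{-1}$ are integral over $B_{s-1}$ (via the scaled minimal polynomial from Proposition~\ref{propminimialpoly} and the Kummer relation) and that $X_{s-1}$ and $(G_s+p\lambda X_{s-1})^{-1}$ lie in $B_s$, then conclude by normality. You fill in two details the paper leaves implicit --- that $\lambda^{p-1}/p$ is a unit of $R$ (making the degree-$p$ relation monic after rescaling) and that $B_s$ is normal as a localization of a regular polynomial ring over the DVR $R$ --- but these are exactly what the paper's appeal to the argument of Proposition~\ref{propintclosure1} is implicitly invoking, so the proofs are the same in substance.
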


\begin{proof}
    We apply the following change of variables:
\begin{equation*}
    Z_{s-1} =: E_{s}(Y_0, \ldots, Y_{s-2}) + \lambda Y_{s-1},
\end{equation*}
where $E_{s}(Y_0, \ldots, Y_{s-2})$ is defined in Section \ref{secE_i}, transforming equation \eqref{eqnKummer2} when $i = s-1$ into
    \begin{equation}
    \label{eqnKummer2s-1}
        (E_{s}(Y_0, \ldots, Y_{s-2}) + \lambda Y_{s-1})^p = (E_{s-1}(Y_0, \ldots, Y_{s-3}) + \lambda Y_{s-2})\left(G_{s}(S_0, \ldots, S_{s-2}) + p\lambda X_{s-1} \right),
    \end{equation}
    which can also be rewritten as
    \begin{equation*}
        \left(\frac{1}{E_{s}(Y_0, \ldots, Y_{s-2}) + \lambda Y_{s-1}} \right)^p = \frac{1}{G_s(S_0, \ldots, S_{s-2}) + p \lambda X_{s-1}} \cdot \frac{1}{E_{s-1}(Y_0, \ldots, Y_{s-3}) + \lambda Y_{s-2}}.
    \end{equation*}
    In addition, Proposition \ref{propminimialpoly} shows that \eqref{eqnKummer2s-1} can be simplified to the minimal polynomial for $Y_{s-1}$ over $B_{s-1}$:
\begin{equation*}
\frac{1}{E_{s-1}(Y_0, \ldots, Y_{s-3}) +\lambda Y_{s-2}} \left(\frac{\lambda^{p-1}}{p} Y_{s-1}^p + Y_{s-1} + H_s(Y_0, \ldots, Y_{s-1}) \right) = X_{s-1}.
\end{equation*}
These equations imply that $Y_{s-1}$ and $\frac{1}{E_{s} + \lambda Y_{s-1}}$ are integral over $B_{s-1}$, and that $X_{s-1}$ lies in $B_s$. Hence $B_s$ is integral over $B_{s-1}$ and contained in the integral closure $\widetilde{B_{s-1}}$ of $B_{s-1}$ in $L_s$. Conversely, as in Proposition~\ref{propintclosure1}, $B_s$ is normal (being a localization of a polynomial ring over the DVR $R$), contains $B_{s-1}$, and has fraction field $L_s$; therefore $\widetilde{B_{s-1}}\subseteq B_s$. Thus $\widetilde{B_{s-1}}=B_s$.
\end{proof}

\begin{corollary}
\label{corspecialunr}
    The special fiber of $\mathcal{W}_s \rightarrow \mathcal{V}_s$ is isomorphic to the morphism $\wp$ in \eqref{eqnASWdetailed}. In particular, it is unramified. 
\end{corollary}

\begin{proof}
    By Proposition~\ref{propintclosure}, $B_s$ is also the integral closure of $\mathcal{O}_{\mathcal{V}_s}$ in $L_s$. Furthermore, the special fiber of $\mathcal{W}_s$ (resp. $\mathcal{V}_s$) is $\mathbb{F}_p[y_0, \ldots, y_{s-1}]$ (resp. $\mathbb{F}_p[x_0, \ldots, x_{s-1}]$). Together with Corollary~\ref{corspecialequation}, this shows that the special fiber is isomorphic to $\wp$ in \eqref{eqnASWdetailed}. Finally, $\wp$ is unramified (equivalently étale) on $W_s$ since $\ker(\wp)\cong \mathbb{Z}/p^s$ is finite étale by \eqref{eqnASWdetailed}. 
\end{proof}

\begin{proposition} \label{propetalereduction}
The cover $\mathcal{W}_s \to \mathcal{V}_s$ is an étale cover with good reduction.
\end{proposition}
\begin{proof}
The assertion of good reduction holds because the special fiber is an étale $\mathbb{Z}/p^s$-torsor between smooth $k$-schemes, as established in Corollary~\ref{corspecialunr}.

For flatness, fix $x \in \mathcal{W}_s$ mapping to $y \in \mathcal{V}_s$, and set $A := \mathcal{O}_{\mathcal{V}_s,y}$ and $B := \mathcal{O}_{\mathcal{W}_s,x}$. Since $\mathcal{V}_s$ and $\mathcal{W}_s$ are smooth over the DVR $R$, both $A$ and $B$ are regular local rings, hence $B$ is Cohen--Macaulay. The morphism $\mathcal{W}_s \to \mathcal{V}_s$ is finite, so $A \to B$ is integral, and in particular $\dim A = \dim B$. By Miracle Flatness (Stacks Project, Lemma 10.128.1), it follows that $B$ is flat over $A$. Hence $\mathcal{W}_s \to \mathcal{V}_s$ is finite and flat.

For étaleness, let $M := \Omega_{\mathcal{W}_s/\mathcal{V}_s}$. Since $\mathcal{W}_s \to \mathcal{V}_s$ is of finite type, $M$ is a coherent $\mathcal{O}_{\mathcal{W}_s}$-module; since the morphism is finite, $M$ is in fact finite over $\mathcal{O}_{\mathcal{W}_s}$. By Proposition~\ref{propgenericunr}, the generic fiber is unramified, so $M[1/\pi]=0$. Thus, for each $x$, the stalk $M_x$ is a finitely generated $B$-module with $(M_x)[1/\pi]=0$, hence there exists $n \ge 1$ such that $\pi^n M_x = 0$. By Corollary~\ref{corspecialunr}, the special fiber is unramified, so $M_x/\pi M_x = 0$, i.e. $M_x=\pi M_x$. Therefore $M_x = \pi^n M_x = 0$ for all $x$, so $M=0$ and the morphism is unramified. Since it is also flat, it is étale.
\end{proof}

\section{The Group Structure}
\label{secgroupstructure}
In this section, we explain how the covers constructed in the previous sections carry natural group scheme structures, and how the morphism $\psi_s\colon \mathcal{W}_s\to\mathcal{V}_s$ becomes a group homomorphism. The key point is that, after completing along the identity sections, certain explicit coordinate maps (defined below) identify the formal neighborhoods of $\mathcal{V}_s$ and $\mathcal{W}_s$ with the formal Witt vector scheme; we then transport Witt vector addition to obtain formal group laws, and finally descend these formal structures to algebraic Hopf algebras over $R$.

Let $R := \mathbb{Z}_{(p)}[\zeta_{p^s}]$ be a discrete valuation ring, and let $\pi := \zeta_{p^s}-1$ be its uniformizer.

\medskip

\noindent\emph{Coordinate rings.}
Let
\[
A := \mathcal{O}(\mathcal{V}_s)
= R[X_0,\dots,X_{s-1}]\bigl[U_0^{-1},\dots,U_{s-1}^{-1}\bigr],
\]
with defining relations
\[
U_0 = 1 + p\lambda X_0,\qquad
U_j = G_{j+1}(X_0,\dots,X_{j-1}) + p\lambda X_j
\quad (1 \le j \le s-1),
\]
and
\[
B := \mathcal{O}(\mathcal{W}_s)
= R[Y_0,\dots,Y_{s-1}]\bigl[Z_0^{-1},\dots,Z_{s-1}^{-1}\bigr],
\]
with defining relations
\[
Z_0 = 1 + \lambda Y_0,\qquad
Z_j = E_{j+1}(Y_0,\dots,Y_{j-1}) + \lambda Y_j
\quad (1 \le j \le s-1).
\]

\medskip

\noindent\emph{Completions along the identity sections.}
Let $I_{\mathcal V}:=(X_0,\ldots,X_{s-1})\subset A$ and $I_{\mathcal W}:=(Y_0,\ldots,Y_{s-1})\subset B$ be the augmentation ideals, and let $\widehat{A}$ (resp.\ $\widehat{B}$) be the $I_{\mathcal V}$-adic (resp.\ $I_{\mathcal W}$-adic) completion.
Since each $U_j$ (resp.\ $Z_j$) has constant term $1$, we have $U_j\equiv 1\pmod{I_{\mathcal V}}$ (resp.\ $Z_j\equiv 1\pmod{I_{\mathcal W}}$); hence $U_j$ (resp.\ $Z_j$) is automatically a unit in the completion. Consequently,
\[
\widehat{A}\cong R\llbracket X_0,\ldots,X_{s-1}\rrbracket,
\qquad
\widehat{B}\cong R\llbracket Y_0,\ldots,Y_{s-1}\rrbracket.
\]

Let $e_{\mathcal V}\in \mathcal{V}_s(R)$ and $e_{\mathcal W}\in \mathcal{W}_s(R)$ denote the identity sections. The formal completions of $\mathcal{V}_s$ and $\mathcal{W}_s$ along these sections are
\[
\widehat{\mathcal{V}}_s := \widehat{\mathcal{V}}_{s,e_{\mathcal V}} = \Spf(\widehat{A}),
\qquad
\widehat{\mathcal{W}}_s := \widehat{\mathcal{W}}_{s,e_{\mathcal W}} = \Spf(\widehat{B}).
\]
Similarly, letting $0\in W_s(R)$ be the identity section, we write
\[
\widehat{W}_s := \widehat{W}_{s,0} = \Spf\bigl(\widehat{\mathcal{O}}_{W_s,0}\bigr)
\]
for the formal completion of $W_s$ at $0$.

From Proposition \ref{propcomputeY} and Proposition \ref{propcomputeX}, we have the (not necessarily algebraic) coordinate maps $\gamma$ and $\tau$. On formal completions along the identity sections, $\tau$ maps $\widehat{\mathcal{V}}_s$ to $\widehat{W}_s$, and $\gamma$ maps $\widehat{\mathcal{W}}_s$ to $\widehat{W}_s$. Concretely,
\begin{equation}
\label{eqngamma}
\gamma\colon \widehat{\mathcal{W}}_s \longrightarrow \widehat{W}_s,
\qquad
(Y_0, \ldots, Y_{s-1}) \longmapsto (Y_0', \ldots, Y_{s-1}')
\end{equation}
\begin{equation}\label{eqntau}
\tau\colon \widehat{\mathcal{V}}_s \longrightarrow \widehat{W}_s,
\qquad
(X_0, \ldots, X_{s-1}) \longmapsto (X_0', \ldots, X_{s-1}').
\end{equation}
More precisely, each $X_i'$ is computed inductively as follows:
\allowdisplaybreaks
\[
\begin{aligned}
    X_0' &= \frac{1}{p \lambda} \ln(1 + p\lambda X_0), \\
    X_1' &= \frac{1}{p \lambda} \ln \left( \frac{G_2(X_0) + p\lambda X_1}{G_{2,p}((X_0', 0))} \right), \\
    &\vdots \\
    X_{s-1}' &= \frac{1}{p \lambda} \ln \left( \frac{G_s(X_0, \ldots, X_{s-2}) + p\lambda X_{s-1}}{G_{s,p}((X_0', \ldots, X_{s-2}', 0))} \right).
\end{aligned}
\]
	Conversely, given $(Y_0',\ldots, Y_{s-1}') \in \widehat{W}_s$ (resp. $(X_0',\ldots, X_{s-1}')\in \widehat{W}_s$), one can compute the corresponding point in the formal completion $\widehat{\mathcal{W}}_s$ (resp. $\widehat{\mathcal{V}}_s$) along the identity section as follows.

\allowdisplaybreaks
\begin{equation}
\label{eqnfromWittbackY}
\begin{aligned}
    Y_0 &= \frac{1}{\lambda} \left( \exp \left( \lambda Y_0' \right) -1 \right), \\
    Y_1 &= \frac{1}{\lambda} \left( \exp \left( \lambda Y_1' \right) E_{2,p}((Y_0',0)) - E_2(Y_0) \right), \\
    &\vdots \\
    Y_{s-1} &= \frac{1}{\lambda} \left( \exp \left( \lambda Y_{s-1}' \right) E_{s,p}((Y_0', \ldots, Y_{s-2}',0)) - E_s(Y_0, \ldots, Y_{s-2}) \right).
\end{aligned}
\end{equation}
(resp.
\allowdisplaybreaks
\begin{equation}
\label{eqnfromWittback}
    \begin{aligned}
    X_0 &= \frac{1}{p \lambda} \left( \exp \left( p \lambda X_0' \right) -1 \right), \\
    X_1 &= \frac{1}{p \lambda} \left( \exp \left( p \lambda X_1' \right) G_{2,p}((X_0',0)) - G_2(X_0) \right), \\
    &\vdots \\
    X_{s-1} &= \frac{1}{p \lambda} \left( \exp \left( p \lambda X_{s-1}' \right) G_{s,p}((X_0', \ldots, X_{s-2}',0)) - G_s(X_0, \ldots, X_{s-2}) \right)
\end{aligned}
\end{equation}
). We denote the induced identification $\widehat{W}_s \to \widehat{\mathcal{V}}_s$ by $\tau^{-1}$ (resp. the identification $\widehat{W}_s \to \widehat{\mathcal{W}}_s$ by $\gamma^{-1}$).

\begin{lemma}\label{lem:tau-gamma-formal-iso}
With notation as above, we have the following.
\begin{enumerate}[label=(\alph*)]
\item \label{lem:tau-gamma-formal-isoa}
The coordinate map $\tau$ defined in \eqref{eqntau} restricts to an isomorphism of formal schemes over $R$, which we denote
\[
\widehat{\tau}\colon \widehat{\mathcal{V}}_s \xrightarrow{\ \sim\ } \widehat{W}_s,
\]
whose inverse is induced by the $X$-formulas in \eqref{eqnfromWittback}.
Equivalently, on completed local rings one gets mutually inverse continuous $R$-algebra homomorphisms
\[
\widehat{\tau}^{\sharp}\colon
\widehat{\mathcal{O}}_{W_s,0}\longrightarrow \widehat{\mathcal{O}}_{\mathcal{V}_s,e_{\mathcal V}},
\qquad
(\widehat{\tau}^{-1})^{\sharp}\colon
\widehat{\mathcal{O}}_{\mathcal{V}_s,e_{\mathcal V}}\longrightarrow \widehat{\mathcal{O}}_{W_s,0}.
\]
\item \label{lem:tau-gamma-formal-isob}
The coordinate map $\gamma$ defined in \eqref{eqngamma} restricts to an isomorphism of formal schemes over $R$, which we denote
\[
\widehat{\gamma}\colon \widehat{\mathcal{W}}_s \xrightarrow{\ \sim\ } \widehat{W}_s,
\]
whose inverse is induced by the $Y$-formulas in \eqref{eqnfromWittbackY}.
Equivalently, on completed local rings one gets mutually inverse continuous $R$-algebra homomorphisms
\[
\widehat{\gamma}^{\sharp}\colon
\widehat{\mathcal{O}}_{W_s,0}\longrightarrow \widehat{\mathcal{O}}_{\mathcal{W}_s,e_{\mathcal W}},
\qquad
(\widehat{\gamma}^{-1})^{\sharp}\colon
\widehat{\mathcal{O}}_{\mathcal{W}_s,e_{\mathcal W}}\longrightarrow \widehat{\mathcal{O}}_{W_s,0}.
\]
\end{enumerate}
\end{lemma}

\begin{proof}
We prove \ref{lem:tau-gamma-formal-isoa}; the proof of \ref{lem:tau-gamma-formal-isob} is identical, replacing $p\lambda$ with $\lambda$ and $G_{j+1},G_{j+1,p}$ with $E_{j+1},E_{j+1,p}$.

Let $I_{\mathcal V}:=(X_0,\ldots,X_{s-1})\subset A$ and $I_W:=(X_0',\ldots,X_{s-1}')\subset \mathcal{O}(W_s)$ be the ideals of the identity sections.
By the discussion above, the $I_{\mathcal V}$-adic completion of $A$ identifies with the completed local ring of $\mathcal{V}_s$ at $e_{\mathcal V}$, i.e.
\[
\widehat{A}\cong \widehat{\mathcal{O}}_{\mathcal{V}_s,e_{\mathcal V}}\cong R\llbracket X_0,\ldots,X_{s-1}\rrbracket,
\qquad
\widehat{\mathcal{O}}_{W_s,0}\cong R\llbracket X_0',\ldots,X_{s-1}'\rrbracket.
\]

\smallskip
\noindent\emph{Step 1: $\tau$ defines a morphism of formal schemes.}
To define a morphism $\widehat{\tau}:\widehat{\mathcal V}_s\to \widehat{W}_s$, it suffices to give a continuous $R$-algebra map
\[
\widehat{\tau}^{\sharp}:R\llbracket X_0',\ldots,X_{s-1}'\rrbracket\to R\llbracket X_0,\ldots,X_{s-1}\rrbracket
\]
sending $I_W$ into $I_{\mathcal V}$.

For $j=0$ set
\[
X_0'=\frac{1}{p\lambda}\ln(1+p\lambda X_0)=\sum_{n\ge 1}(-1)^{n-1}\frac{(p\lambda)^{n-1}}{n}X_0^n.
\]
Since $p\lambda\in pR$, one has $\nu((p\lambda)^{n-1})\ge n-1\ge \nu(n)$, so the coefficients lie in $R$ and $X_0'\in I_{\mathcal V}$.

Assume inductively that $X_0',\ldots,X_{j-1}'\in R\llbracket X_0,\ldots,X_{j-1}\rrbracket$ and have zero constant term.
Define
\[
X_j'=\frac{1}{p\lambda}\ln\!\Bigl(\frac{G_{j+1}(X_0,\ldots,X_{j-1})+p\lambda X_j}{G_{j+1,p}(X_0',\ldots,X_{j-1}',0)}\Bigr).
\]
By Proposition~\ref{propapproxG},
\[
G_{j+1}(X_0,\ldots,X_{j-1})\equiv G_{j+1,p}(X_0',\ldots,X_{j-1}',0)\pmod{p\lambda}.
\]
Indeed, $G_{j+1,p}(X_0',\ldots,X_{j-1}',0)-G_{j+1}(X_0,\ldots,X_{j-1})=G'_{j+1}$ has
$\nu(G'_{j+1})>\frac{p}{p-1}=\nu(p\lambda)$ by Proposition~\ref{propapproxG}, hence $G'_{j+1}\in(p\lambda)$.
Moreover $G_{j+1,p}(X_0',\ldots,X_{j-1}',0)$ has constant term $1$, hence is a unit in the complete local ring $R\llbracket X_0,\ldots,X_{j-1}\rrbracket$. Therefore the ratio is of the form $1+p\lambda\Delta$ with $\Delta\in R\llbracket X_0,\ldots,X_j\rrbracket$, and since numerator and denominator both have constant term $1$, one has $\Delta\in I_{\mathcal V}\,R\llbracket X_0,\ldots,X_j\rrbracket$.

Thus $\ln(1+p\lambda\Delta)$ is a well-defined formal power series in the $I_{\mathcal V}$-adic topology, and dividing by $p\lambda$ yields $X_j'\in I_{\mathcal V}$. This defines a continuous $R$-algebra map $\widehat{\tau}^{\sharp}$.

\smallskip
\noindent\emph{Step 2: $\tau^{-1}$ defines a morphism of formal schemes.}
Similarly, to define $\widehat{\tau}^{-1}:\widehat{W}_s\to\widehat{\mathcal V}_s$ it suffices to define a continuous map on completed local rings.
For $j=0$ set
\[
X_0=\frac{1}{p\lambda}\bigl(\exp(p\lambda X_0')-1\bigr)=\sum_{n\ge1}\frac{(p\lambda)^{n-1}}{n!}(X_0')^n.
\]
As $p\lambda\in pR$, one has $\nu((p\lambda)^{n-1})\ge n-1\ge \nu(n!)$, hence $X_0\in R\llbracket X_0'\rrbracket$ and $X_0\in I_W$.

Assume inductively that $X_0,\ldots,X_{j-1}\in R\llbracket X_0',\ldots,X_{j-1}'\rrbracket$ lie in $I_W$, and that
after substituting these series into the forward formulas, one recovers $X_0',\ldots,X_{j-1}'$.
Define $X_j$ by the recursion \eqref{eqnfromWittback}:
\[
X_j=\frac{1}{p\lambda}\Bigl(\exp(p\lambda X_j')\,G_{j+1,p}(X_0',\ldots,X_{j-1}',0)-G_{j+1}(X_0,\ldots,X_{j-1})\Bigr).
\]
Write the numerator as
\[
(\exp(p\lambda X_j')-1)\,G_{j+1,p}(X_0',\ldots,X_{j-1}',0)
\;+\;
\Bigl(G_{j+1,p}(X_0',\ldots,X_{j-1}',0)-G_{j+1}(X_0,\ldots,X_{j-1})\Bigr).
\]
The first term is in $(p\lambda)$. For the second term, recall from Proposition~\ref{propapproxG} that
\[
G'_{j+1}(X):=
G_{j+1,p}(X_0'(X),\ldots,X_{j-1}'(X),0)-G_{j+1}(X_0,\ldots,X_{j-1})
\in (p\lambda)\,R\llbracket X_0,\ldots,X_{j-1}\rrbracket.
\]
Substituting $X_i=X_i(X_0',\ldots,X_i')$ and using the inductive identities
$X_i'(X_0,\ldots,X_i)=X_i'$ for $i<j$, we get
\[
G_{j+1,p}(X_0',\ldots,X_{j-1}',0)-G_{j+1}(X_0,\ldots,X_{j-1})\in(p\lambda).
\]
Hence the numerator lies in $(p\lambda)$, so $X_j\in R\llbracket X_0',\ldots,X_j'\rrbracket$. Since both terms have constant term $1$, their difference has constant term $0$, so $X_j\in I_W$.

\smallskip
\noindent\emph{Step 3: the maps are inverse.}
We check that $\widehat{\tau}^{\sharp}$ and $(\widehat{\tau}^{-1})^{\sharp}$ are mutually inverse by verifying on the topological generators.
Let $I_{\mathcal V}:=(X_0,\ldots,X_{s-1})\subset R\llbracket X_0,\ldots,X_{s-1}\rrbracket$ and $I_W:=(X_0',\ldots,X_{s-1}')\subset R\llbracket X_0',\ldots,X_{s-1}'\rrbracket$.

The identities
\[
\exp(\ln(1+T))=1+T,\qquad \ln(\exp(T))=T,
\]
hold as formal identities in $R\llbracket T\rrbracket$; hence they hold after substitution for $T\in I_{\mathcal V}$ (resp.\ $T\in I_W$) in the $I_{\mathcal V}$-adic (resp.\ $I_W$-adic) topology.

For $j=0$, we have
\[
X_0'=\frac{1}{p\lambda}\ln(1+p\lambda X_0),
\qquad
X_0=\frac{1}{p\lambda}(\exp(p\lambda X_0')-1),
\]
so composing in either order reduces immediately to the identities above with $T=p\lambda X_0$ (resp. $T=p\lambda X_0'$).

Assume inductively that the two compositions are the identity on
$X'_0,\ldots,X'_{j-1}$ (equivalently on $X_0,\ldots,X_{j-1}$).
Set
\[
\Theta_j \,:=\, G_{j+1,p}(X'_0,\ldots,X'_{j-1},0)\in R\llbracket X'_0,\ldots,X'_{j-1}\rrbracket,
\qquad
\widetilde\Theta_j \,:=\, \widehat{\tau}^{\sharp}(\Theta_j)\in R\llbracket X_0,\ldots,X_{j-1}\rrbracket .
\]
Both $\Theta_j$ and $\widetilde\Theta_j$ have constant term $1$, hence are units.

\smallskip
\noindent\emph{(1) Check $(\widehat{\tau}^{-1})^{\sharp}\circ \widehat{\tau}^{\sharp}(X'_j)=X'_j$.}
Set
\[
V_j \,:=\,
\frac{G_{j+1}(X_0,\ldots,X_{j-1})+p\lambda X_j}{\widetilde\Theta_j}
\in R\llbracket X_0,\ldots,X_j\rrbracket,
\]
so by definition
\[
\widehat{\tau}^{\sharp}(X'_j)=\frac{1}{p\lambda}\ln(V_j).
\]
By Proposition~\ref{propapproxG}, $V_j=1+p\lambda\Delta_j$, so the logarithm is defined. Applying $(\widehat{\tau}^{-1})^{\sharp}$ and using the induction hypothesis on lower coordinates gives
\[
(\widehat{\tau}^{-1})^{\sharp}\!\bigl(G_{j+1}(X_0,\ldots,X_{j-1})\bigr)
= G_{j+1}\bigl((\widehat{\tau}^{-1})^{\sharp}(X_0),\ldots,(\widehat{\tau}^{-1})^{\sharp}(X_{j-1})\bigr),
\]
and similarly $(\widehat{\tau}^{-1})^{\sharp}(\widetilde\Theta_j)=\Theta_j$.
By the recursion \eqref{eqnfromWittback},
\[
p\lambda\,(\widehat{\tau}^{-1})^{\sharp}(X_j)
=
\exp(p\lambda X'_j)\,\Theta_j
-
G_{j+1}\bigl((\widehat{\tau}^{-1})^{\sharp}(X_0),\ldots,(\widehat{\tau}^{-1})^{\sharp}(X_{j-1})\bigr).
\]
Substituting this into $(\widehat{\tau}^{-1})^{\sharp}(V_j)$ yields
\[
(\widehat{\tau}^{-1})^{\sharp}(V_j)
=
\frac{G_{j+1}(\cdots)+p\lambda\,(\widehat{\tau}^{-1})^{\sharp}(X_j)}{\Theta_j}
=
\frac{G_{j+1}(\cdots)+\bigl(\exp(p\lambda X'_j)\,\Theta_j - G_{j+1}(\cdots)\bigr)}{\Theta_j}
=
\exp(p\lambda X'_j).
\]
Therefore
\[
(\widehat{\tau}^{-1})^{\sharp}\bigl(\widehat{\tau}^{\sharp}(X'_j)\bigr)
=
\frac{1}{p\lambda}\ln\!\bigl((\widehat{\tau}^{-1})^{\sharp}(V_j)\bigr)
=
\frac{1}{p\lambda}\ln\!\bigl(\exp(p\lambda X'_j)\bigr)
=
X'_j,
\]
using $\ln(\exp(T))=T$ for $T=p\lambda X'_j$ (with the stated valuation condition).

\smallskip
\noindent\emph{(2) Check $\widehat{\tau}^{\sharp}\circ(\widehat{\tau}^{-1})^{\sharp}(X_j)=X_j$.}
Start from the recursion \eqref{eqnfromWittback}, viewed as the definition of $(\widehat{\tau}^{-1})^{\sharp}(X_j)$:
\[
p\lambda\,(\widehat{\tau}^{-1})^{\sharp}(X_j)
=
\exp(p\lambda X'_j)\,\Theta_j
-
G_{j+1}\bigl((\widehat{\tau}^{-1})^{\sharp}(X_0),\ldots,(\widehat{\tau}^{-1})^{\sharp}(X_{j-1})\bigr).
\]
Apply $\widehat{\tau}^{\sharp}$ to both sides. By the induction hypothesis,
$\widehat{\tau}^{\sharp}\circ(\widehat{\tau}^{-1})^{\sharp}$ fixes $X_0,\ldots,X_{j-1}$, hence it fixes the polynomial
$G_{j+1}(X_0,\ldots,X_{j-1})$. Also $\widehat{\tau}^{\sharp}(\Theta_j)=\widetilde\Theta_j$, and
$\widehat{\tau}^{\sharp}(\exp(p\lambda X'_j))=\exp\bigl(p\lambda\,\widehat{\tau}^{\sharp}(X'_j)\bigr)$.
But $\widehat{\tau}^{\sharp}(X'_j)$ was defined so that
\[
\exp\bigl(p\lambda\,\widehat{\tau}^{\sharp}(X'_j)\bigr)=V_j.
\]
Since $\widehat{\tau}^{\sharp}(\Theta_j)=\widetilde\Theta_j$, we get
\[
\widehat{\tau}^{\sharp}\bigl(\exp(p\lambda X'_j)\,\Theta_j\bigr)
=
\exp\bigl(p\lambda\,\widehat{\tau}^{\sharp}(X'_j)\bigr)\,\widetilde\Theta_j
=
V_j\,\widetilde\Theta_j.
\]
Therefore
\[
p\lambda\cdot \widehat{\tau}^{\sharp}\bigl((\widehat{\tau}^{-1})^{\sharp}(X_j)\bigr)
=
V_j\,\widetilde\Theta_j - G_{j+1}(X_0,\ldots,X_{j-1})
=
p\lambda X_j,
\]
so dividing by $p\lambda$ gives
$\widehat{\tau}^{\sharp}\circ(\widehat{\tau}^{-1})^{\sharp}(X_j)=X_j$.
This completes the induction for $j$.

Thus $\widehat{\tau}$ is an isomorphism of formal schemes with inverse induced by \eqref{eqnfromWittback}.
\end{proof}

We can therefore naturally declare the group operations on the formal completions $\widehat{\mathcal{W}}_s$ and $\widehat{\mathcal{V}}_s$ over $R$.

\begin{definition}
\label{defngroupstructurecompletion}
    Define a group structure on $\widehat{\mathcal{W}}_s$ (and similarly on $\widehat{\mathcal{V}}_s$) over $R$ by pulling back the Witt vector group structure on $\widehat{W}_s$ via the coordinate maps $\gamma$ (or $\tau$).
\end{definition}

\begin{proposition}\label{prop:formal-group-structure}
The map $\tau$ (resp.\ $\gamma$) induces a \emph{formal group scheme} structure
over $R$ on the formal completion $\widehat{\mathcal V}_s$ (resp.\ $\widehat{\mathcal W}_s$)
along the identity section. Equivalently, the structure maps
\[
\widehat{\mathcal V}_s\times_R \widehat{\mathcal V}_s \longrightarrow \widehat{\mathcal V}_s,
\qquad
\widehat{\mathcal V}_s \longrightarrow \widehat{\mathcal V}_s,
\qquad
\Spf(R)\longrightarrow \widehat{\mathcal V}_s
\]
(and similarly for $\widehat{\mathcal W}_s$)
are given in coordinates by $s$-tuples of formal power series with coefficients in $R$.
\end{proposition}

\begin{proof}
We treat $\mathcal V_s$; the case of $\mathcal W_s$ is identical with $\gamma$ in place of $\tau$.

By Lemma~\ref{lem:tau-gamma-formal-iso}, the map
\[
\widehat{\tau}:\widehat{\mathcal V}_s \longrightarrow \widehat{W}_s
\]
is an isomorphism of formal schemes over $R$; equivalently, on completed local rings it induces mutually inverse continuous $R$-algebra maps
\[
\widehat{\tau}^{\sharp}: \widehat{\mathcal O}_{W_s,0}\to \widehat{\mathcal O}_{\mathcal V_s,e_{\mathcal V}},
\qquad
(\widehat{\tau}^{-1})^{\sharp}: \widehat{\mathcal O}_{\mathcal V_s,e_{\mathcal V}}\to \widehat{\mathcal O}_{W_s,0}.
\]

\smallskip
\noindent\emph{Step 1: Witt operations restrict to the formal completion.}
Let $\mu_W:W_s\times_R W_s\to W_s$ and $\iota_W:W_s\to W_s$ denote Witt addition and inversion.
Since these are morphisms of $R$-schemes sending $(0,0)$ to $0$ and $0$ to $0$, they induce morphisms on formal completions
\[
\widehat{\mu}_W:\widehat{W}_s\times_R\widehat{W}_s\to\widehat{W}_s,
\qquad
\widehat{\iota}_W:\widehat{W}_s\to\widehat{W}_s.
\]
On coordinate rings, this means $\widehat{\mu}_W^{\sharp}$ and $\widehat{\iota}_W^{\sharp}$ are continuous homomorphisms
\begin{align*}
R\llbracket X_0',\ldots,X_{s-1}'\rrbracket
&\longrightarrow
R\llbracket X_{0,1}',\ldots,X_{s-1,1}',X_{0,2}',\ldots,X_{s-1,2}'\rrbracket,\\
R\llbracket X_0',\ldots,X_{s-1}'\rrbracket
&\longrightarrow
R\llbracket X_0',\ldots,X_{s-1}'\rrbracket,
\end{align*}
where the fiber product $\times_R$ corresponds (on completed coordinate rings) to the completed tensor product.
Here $X_{i,1}' = X_i'\otimes 1$ and $X_{i,2}'=1\otimes X_i'$.

\smallskip
\noindent\emph{Step 2: Transport of structure.}
Define the formal multiplication and inverse on $\widehat{\mathcal V}_s$ by
\[
\widehat{\mu}_\tau := \widehat{\tau}^{-1}\circ \widehat{\mu}_W\circ (\widehat{\tau}\times \widehat{\tau}),
\qquad
\widehat{\iota}_\tau := \widehat{\tau}^{-1}\circ \widehat{\iota}_W\circ \widehat{\tau},
\qquad
 e := \widehat{\tau}^{-1}(0).
\]
Each map is a morphism of formal schemes because it is a composition of morphisms of formal schemes.

\smallskip
\noindent\emph{Step 3: Coordinate description (power series over $R$).}
Using
$\widehat{\mathcal O}_{\mathcal V_s,e_{\mathcal V}}\cong R\llbracket X_0,\ldots,X_{s-1}\rrbracket$
and
$\widehat{\mathcal O}_{W_s,0}\cong R\llbracket X_0',\ldots,X_{s-1}'\rrbracket$,
write
\begin{align*}
\widehat{\tau}^{\sharp}(X_i')
&= f_i(X_0,\ldots,X_{s-1})
\in (X_0,\ldots,X_{s-1})\,R\llbracket X_0,\ldots,X_{s-1}\rrbracket,\\
(\widehat{\tau}^{-1})^{\sharp}(X_i)
&= g_i(X_0',\ldots,X_{s-1}')
\in (X_0',\ldots,X_{s-1}')\,R\llbracket X_0',\ldots,X_{s-1}'\rrbracket.
\end{align*}

\smallskip
\noindent\emph{(a) The inverse map.}
The formal inverse $\widehat{\iota}_\tau$ has comorphism
\[
\widehat{\iota}_\tau^{\sharp}
=\widehat{\tau}^{\sharp}\circ \widehat{\iota}_W^{\sharp}\circ (\widehat{\tau}^{-1})^{\sharp}
\colon R\llbracket X_0,\ldots,X_{s-1}\rrbracket\to R\llbracket X_0,\ldots,X_{s-1}\rrbracket.
\]
Witt inversion on $W_s$ is given by universal polynomials $I_0,\ldots,I_{s-1}\in\mathbb{Z}[\underline U]$, i.e.
\(
\widehat{\iota}_W^{\sharp}(X_i')=I_i(X_0',\ldots,X_{s-1}')
\).
Applying $(\widehat{\tau}^{-1})^{\sharp}$ sends $X_j$ to $g_j\in R\llbracket X'\rrbracket$, so
$I_i(g_0,\ldots,g_{s-1})\in R\llbracket X'\rrbracket$ since $I_i$ has integral coefficients.
Composing with $\widehat{\tau}^{\sharp}$ then yields an element of $R\llbracket X\rrbracket$.
Hence each coordinate of $\widehat{\iota}_\tau$ is a power series with coefficients in $R$.

\smallskip
\noindent\emph{(b) The multiplication map.}
Likewise, the formal multiplication $\widehat{\mu}_\tau$ has comorphism
\[
\widehat{\mu}_\tau^{\sharp}
=(\widehat{\tau}\times\widehat{\tau})^{\sharp}\circ \widehat{\mu}_W^{\sharp}\circ (\widehat{\tau}^{-1})^{\sharp}
\colon R\llbracket X_0,\ldots,X_{s-1}\rrbracket\to R\llbracket X_{0,1},\ldots,X_{s-1,1},X_{0,2},\ldots,X_{s-1,2}\rrbracket.
\]
Here $X_{i,1}=X_i\otimes 1$ and $X_{i,2}=1\otimes X_i$.
Witt addition is given by universal Witt polynomials $\Sigma_0,\ldots,\Sigma_{s-1}\in\mathbb{Z}[\underline U,\underline V]$, i.e.
\(
\widehat{\mu}_W^{\sharp}(X_i')=\Sigma_i(X_{0,1}',\ldots,X_{s-1,1}',X_{0,2}',\ldots,X_{s-1,2}')
\).
Applying $(\widehat{\tau}^{-1})^{\sharp}$ and then $\widehat{\mu}_W^{\sharp}$ gives an element of
$R\llbracket X_{0,1}',\ldots,X_{s-1,1}', \allowbreak X_{0,2}',\ldots,X_{s-1,2}'\rrbracket$
since each $\Sigma_i$ is an integral polynomial.
Finally, $(\widehat{\tau}\times\widehat{\tau})^{\sharp}$ substitutes
$X_{j,1}'\mapsto f_j(X_{0,1},\ldots,X_{s-1,1})$ and $X_{j,2}'\mapsto f_j(X_{0,2},\ldots,X_{s-1,2})$.
Since each $f_j$ has zero constant term, this substitution is well-defined and yields an element of
$R\llbracket X_{0,1},\ldots,X_{s-1,1},X_{0,2},\ldots,X_{s-1,2}\rrbracket$.
Thus each coordinate of $\widehat{\mu}_\tau$ is a power series with coefficients in $R$.

\smallskip
\noindent\emph{Step 4: Group axioms.}
Associativity, commutativity, identity, and inverses hold because they hold for $(\widehat{W}_s,\widehat{\mu}_W,\widehat{\iota}_W)$ and we transported the structure along the isomorphism $\widehat{\tau}$.
\end{proof}

To describe these group schemes algebraically, we specify the Hopf algebra structures on their coordinate rings corresponding to Witt vector addition.

\medskip

Let $K=\mathrm{Frac}(R)$, and set $A_K:=A\otimes_R K$ and $B_K:=B\otimes_R K$. We denote
\[
T_K^{\mathcal{V}} := K[U_0^{\pm1},\dots,U_{s-1}^{\pm1}] \subset A_K
\qquad\text{and}\qquad
T_K^{\mathcal{W}} := K[Z_0^{\pm1},\dots,Z_{s-1}^{\pm1}] \subset B_K.
\]

\begin{proposition}
\label{prop:Hopf-over-K}
\begin{enumerate}[label=(\alph*)]
\item\label{prop:Hopf-over-Ka}
There exists a unique Hopf $K$-algebra structure
\[
(\Delta_K^{\mathcal{V}},\varepsilon_K^{\mathcal{V}},S_K^{\mathcal{V}})\colon
A_K \longrightarrow A_K\otimes_K A_K,\; K,\; A_K
\]
such that, for all $0\le j\le s-1$,
\[
\Delta_K^{\mathcal{V}}(U_j)=U_j\otimes U_j,\qquad
\varepsilon_K^{\mathcal{V}}(U_j)=1,\qquad
S_K^{\mathcal{V}}(U_j)=U_j^{-1}.
\]
With respect to the generators $X_j$, the structure maps are given recursively by
\begin{align*}
\varepsilon_K^{\mathcal{V}}(X_j) &= 0 \qquad (0\le j\le s-1),\\[4pt]
\Delta_K^{\mathcal{V}}(X_0)
&= \frac{U_0\otimes U_0-1}{p\lambda}
= \frac{(1+p\lambda X_0)\otimes(1+p\lambda X_0)-1}{p\lambda},\\[4pt]
\Delta_K^{\mathcal{V}}(X_j)
&= \frac{U_j\otimes U_j-
G_{j+1}\bigl(\Delta_K^{\mathcal{V}}(X_0),\dots,\Delta_K^{\mathcal{V}}(X_{j-1})\bigr)}{p\lambda}
\qquad (1\le j\le s-1),\\[4pt]
S_K^{\mathcal{V}}(X_0)
&= \frac{U_0^{-1}-1}{p\lambda}
= \frac{(1+p\lambda X_0)^{-1}-1}{p\lambda},\\[4pt]
S_K^{\mathcal{V}}(X_j)
&= \frac{U_j^{-1}-
G_{j+1}\bigl(S_K^{\mathcal{V}}(X_0),\dots,S_K^{\mathcal{V}}(X_{j-1})\bigr)}{p\lambda}
\qquad (1\le j\le s-1).
\end{align*}

\item\label{prop:Hopf-over-Kb}
There exists a unique Hopf $K$-algebra structure
\[
(\Delta_K^{\mathcal W},\varepsilon_K^{\mathcal W},S_K^{\mathcal W})\colon
B_K \longrightarrow B_K\otimes_K B_K,\; K,\; B_K
\]
such that, for all $0\le j\le s-1$,
\[
\Delta_K^{\mathcal W}(Z_j)=Z_j\otimes Z_j,\qquad
\varepsilon_K^{\mathcal W}(Z_j)=1,\qquad
S_K^{\mathcal W}(Z_j)=Z_j^{-1}.
\]
With respect to the generators $Y_j$, the structure maps are given recursively by
\begin{align*}
\varepsilon_K^{\mathcal W}(Y_j) &= 0 \qquad (0\le j\le s-1),\\[4pt]
\Delta_K^{\mathcal W}(Y_0)
&= \frac{Z_0\otimes Z_0-1}{\lambda}
= \frac{(1+\lambda Y_0)\otimes(1+\lambda Y_0)-1}{\lambda},\\[4pt]
\Delta_K^{\mathcal W}(Y_j)
&= \frac{Z_j\otimes Z_j-
E_{j+1}\bigl(\Delta_K^{\mathcal W}(Y_0),\dots,
\Delta_K^{\mathcal W}(Y_{j-1})\bigr)}{\lambda}
\qquad (1\le j\le s-1),\\[4pt]
S_K^{\mathcal W}(Y_0)
&= \frac{Z_0^{-1}-1}{\lambda}
= \frac{(1+\lambda Y_0)^{-1}-1}{\lambda},\\[4pt]
S_K^{\mathcal W}(Y_j)
&= \frac{Z_j^{-1}-
E_{j+1}\bigl(S_K^{\mathcal W}(Y_0),\dots,
S_K^{\mathcal W}(Y_{j-1})\bigr)}{\lambda}
\qquad (1\le j\le s-1).
\end{align*}
\end{enumerate}
\end{proposition}

\begin{proof}
We prove \ref{prop:Hopf-over-Ka} and \ref{prop:Hopf-over-Kb} in parallel, since the constructions are formally identical.

\smallskip
\noindent\emph{Convention.} We repeatedly use that each of the polynomials $G_{j+1}(X_0,\dots,X_{j-1})$ and $E_{j+1}(Y_0,\dots,Y_{j-1})$ has constant term $1$ (equivalently $G_{j+1}(0,\dots,0)=E_{j+1}(0,\dots,0)=1$). 

\medskip\noindent
\emph{Proof of \ref{prop:Hopf-over-Ka}.}
\smallskip\noindent
\emph{Step 1: Torus subalgebra.}
Declare on \(T_K^{\mathcal{V}}\) the standard split torus Hopf structure
\[
\Delta^{\mathcal{V}}_K(U_j)=U_j\otimes U_j,\qquad
\varepsilon^{\mathcal{V}}_K(U_j)=1,\qquad
S^{\mathcal{V}}_K(U_j)=U_j^{-1}.
\]
Then \(T_K^{\mathcal{V}}\simeq \mathcal{O}((\mathbb{G}_m)^s_K)\) is a Hopf \(K\)-algebra.

\smallskip\noindent
\emph{Step 2: Counit on \(A_K\).}
Define \(\varepsilon^{\mathcal{V}}_K\colon A_K\to K\) by \(\varepsilon^{\mathcal{V}}_K(X_j)=0\) for all \(0\le j\le s-1\).
We check compatibility with the defining relations of \(A\).
For \(j=0\), the relation \(U_0=1+p\lambda X_0\) gives \(\varepsilon^{\mathcal{V}}_K(U_0)=1\).
For \(j\ge 1\), using
\[
U_j=G_{j+1}(X_0,\dots,X_{j-1})+p\lambda X_j
\quad\text{and}\quad
G_{j+1}(0,\dots,0)=1,
\]
we obtain \(\varepsilon^{\mathcal{V}}_K(U_j)=1\).
Hence \(\varepsilon^{\mathcal{V}}_K\) extends the counit on \(T_K^{\mathcal{V}}\).

\smallskip\noindent
\emph{Step 3: Comultiplication on \(A_K\).}
Since \(p\lambda\in K^\times\), the defining relations can be rewritten as
\[
p\lambda X_0=U_0-1,\qquad
p\lambda X_j=U_j-G_{j+1}(X_0,\dots,X_{j-1})\quad (1\le j\le s-1).
\]
We already fixed \(\Delta^{\mathcal{V}}_K(U_j)=U_j\otimes U_j\).
Define \(\Delta^{\mathcal{V}}_K(X_j)\) recursively by
\[
\Delta^{\mathcal{V}}_K(X_0):=\frac{U_0\otimes U_0-1}{p\lambda},
\qquad
\Delta^{\mathcal{V}}_K(X_j):=\frac{U_j\otimes U_j-
G_{j+1}\bigl(\Delta^{\mathcal{V}}_K(X_0),\dots,\Delta^{\mathcal{V}}_K(X_{j-1})\bigr)}{p\lambda}.
\]
Applying \(\Delta^{\mathcal{V}}_K\) to the relation
\(p\lambda X_j=U_j-G_{j+1}(X_0,\dots,X_{j-1})\)
shows that the image of the relation is \(0\) in \(A_K\otimes_K A_K\) by construction.
Moreover, since each \(U_j\) is group-like,
\(\Delta^{\mathcal{V}}_K(U_j^{-1})=(U_j\otimes U_j)^{-1}=U_j^{-1}\otimes U_j^{-1}\),
so \(\Delta^{\mathcal{V}}_K\) respects the localization and extends uniquely to a \(K\)-algebra map
\(A_K\to A_K\otimes_K A_K\).

\smallskip\noindent
\emph{Step 4: Antipode on \(A_K\).}
Set \(S^{\mathcal{V}}_K(U_j)=U_j^{-1}\) for all \(j\).
Define \(S^{\mathcal{V}}_K(X_j)\) recursively by
\[
S^{\mathcal{V}}_K(X_0):=\frac{U_0^{-1}-1}{p\lambda},
\qquad
S^{\mathcal{V}}_K(X_j):=\frac{U_j^{-1}-
G_{j+1}\bigl(S^{\mathcal{V}}_K(X_0),\dots,S^{\mathcal{V}}_K(X_{j-1})\bigr)}{p\lambda}.
\]
Applying \(S^{\mathcal{V}}_K\) to the relation
\(p\lambda X_j=U_j-G_{j+1}(X_0,\dots,X_{j-1})\)
shows it is preserved.
Also \(S^{\mathcal{V}}_K(U_j^{-1})=U_j\), so \(S^{\mathcal{V}}_K\) respects the localization, hence extends to a
\(K\)-algebra homomorphism \(A_K\to A_K\).

\smallskip\noindent
\emph{Step 5: Hopf axioms and uniqueness.}
The Hopf identities hold on the Hopf subalgebra \(T_K^{\mathcal{V}}\).
Assume inductively that they hold on the \(K\)-subalgebra generated by
\(T_K^{\mathcal{V}}\) and \(X_0,\dots,X_{j-1}\).
Consider the defining identity
\[
p\lambda X_j=U_j-G_{j+1}(X_0,\dots,X_{j-1}).
\]

\emph{Coassociativity.}
Apply \((\Delta^{\mathcal{V}}_K\otimes \mathrm{id})\circ\Delta^{\mathcal{V}}_K\) and \((\mathrm{id}\otimes\Delta^{\mathcal{V}}_K)\circ\Delta^{\mathcal{V}}_K\)
to the formula defining \(\Delta^{\mathcal{V}}_K(X_j)\).
The \(U_j\)-term gives \(U_j\otimes U_j\otimes U_j\) on both sides since \(U_j\) is group-like.
By the induction hypothesis, \(\Delta^{\mathcal{V}}_K\) is coassociative on \(X_0,\dots,X_{j-1}\), hence the
two ways of applying \(\Delta^{\mathcal{V}}_K\) to the polynomial
\(G_{j+1}\bigl(\Delta^{\mathcal{V}}_K(X_0),\dots,\Delta^{\mathcal{V}}_K(X_{j-1})\bigr)\)
agree, and therefore coassociativity holds on \(X_j\).

\emph{Counit.}
Apply \((\varepsilon^{\mathcal{V}}_K\otimes \mathrm{id})\circ\Delta^{\mathcal{V}}_K\) to the defining formula for \(\Delta^{\mathcal{V}}_K(X_j)\).
Since \(\varepsilon^{\mathcal{V}}_K(U_j)=1\), we have
\[
(\varepsilon^{\mathcal{V}}_K\otimes\mathrm{id})(U_j\otimes U_j)=U_j.
\]
By the induction hypothesis, \((\varepsilon^{\mathcal{V}}_K\otimes\mathrm{id})\Delta^{\mathcal{V}}_K(X_i)=X_i\) for \(i<j\), hence
\[
(\varepsilon^{\mathcal{V}}_K\otimes\mathrm{id})\,
G_{j+1}\bigl(\Delta^{\mathcal{V}}_K(X_0),\dots,\Delta^{\mathcal{V}}_K(X_{j-1})\bigr)
=
G_{j+1}(X_0,\dots,X_{j-1}).
\]
Therefore
\[
(\varepsilon^{\mathcal{V}}_K\otimes \mathrm{id})\Delta^{\mathcal{V}}_K(X_j)
=
\frac{U_j-G_{j+1}(X_0,\dots,X_{j-1})}{p\lambda}
= X_j,
\]
using the defining relation \(p\lambda X_j=U_j-G_{j+1}(X_0,\dots,X_{j-1})\).
Similarly \((\mathrm{id}\otimes \varepsilon^{\mathcal{V}}_K)\Delta^{\mathcal{V}}_K(X_j)=X_j\).

\emph{Antipode axiom.}
Apply \(m\circ(\mathrm{id}\otimes S^{\mathcal{V}}_K)\) to the defining formula for \(\Delta^{\mathcal{V}}_K(X_j)\).
The group-like property gives
\(m(\mathrm{id}\otimes S^{\mathcal{V}}_K)\Delta^{\mathcal{V}}_K(U_j)=U_j\cdot U_j^{-1}=1\).
By the induction hypothesis,
\(m(\mathrm{id}\otimes S^{\mathcal{V}}_K)\Delta^{\mathcal{V}}_K(X_i)=\varepsilon^{\mathcal{V}}_K(X_i)=0\) for \(i<j\),
so
\[
m(\mathrm{id}\otimes S^{\mathcal{V}}_K)\,
G_{j+1}\bigl(\Delta^{\mathcal{V}}_K(X_0),\dots,\Delta^{\mathcal{V}}_K(X_{j-1})\bigr)
=
G_{j+1}(0,\dots,0)=1.
\]
Hence \(m(\mathrm{id}\otimes S^{\mathcal{V}}_K)\Delta^{\mathcal{V}}_K(X_j)=0=\varepsilon^{\mathcal{V}}_K(X_j)\).
The identity \(m(S^{\mathcal{V}}_K\otimes \mathrm{id})\Delta^{\mathcal{V}}_K(X_j)=\varepsilon^{\mathcal{V}}_K(X_j)\) is analogous.

This proves that \((A_K,\Delta^{\mathcal{V}}_K,\varepsilon^{\mathcal{V}}_K,S^{\mathcal{V}}_K)\) is a Hopf \(K\)-algebra.
Uniqueness follows immediately: if \((\Delta,\varepsilon,S)\) is any Hopf structure on \(A_K\)
with all \(U_j\) group-like, then \(\varepsilon(U_j)=1\) and \(S(U_j)=U_j^{-1}\) are forced, and
applying \(\Delta\) (resp.\ \(S\)) to
\(p\lambda X_j=U_j-G_{j+1}(X_0,\dots,X_{j-1})\)
and using \(p\lambda\in K^\times\) forces exactly the displayed recursive formulas for
\(\Delta(X_j)\) and \(S(X_j)\).

\medskip\noindent
\emph{Proof of \ref{prop:Hopf-over-Kb}.}
The construction is the same. Declare the split torus Hopf structure on $T_K^{\mathcal{W}}$
\[
\Delta_K^{\mathcal{W}}(Z_j)=Z_j\otimes Z_j,\qquad
\varepsilon_K^{\mathcal{W}}(Z_j)=1,\qquad
S_K^{\mathcal{W}}(Z_j)=Z_j^{-1}.
\]
Define \(\varepsilon_K^{\mathcal{W}}(Y_j)=0\).
Using the relations
\[
\lambda Y_0=Z_0-1,\qquad
\lambda Y_j=Z_j-E_{j+1}(Y_0,\dots,Y_{j-1})\quad (1\le j\le s-1)
\]
and the fact that \(E_{j+1}(0,\dots,0)=1\), define \(\Delta_K^{\mathcal{W}}(Y_j)\) and
\(S_K^{\mathcal{W}}(Y_j)\) by the displayed recursive formulas. The verifications of the Hopf
axioms and uniqueness are identical to \ref{prop:Hopf-over-Ka}, replacing \(p\lambda,G_{j+1},U_j,X_j\) by
\(\lambda,E_{j+1},Z_j,Y_j\).
\end{proof}

\begin{proposition}\label{prop:compat-group-law}
\begin{enumerate}[label=\textup{(\alph*)},leftmargin=2.2em]
\item\label{prop:compat-group-lawa}
Let $(\Delta^{\mathcal{V}}_K,\varepsilon^{\mathcal{V}}_K,S^{\mathcal{V}}_K)$ be the Hopf $K$-algebra structure on
$A_K=\mathcal{O}(\mathcal{V}_{s,K})$ from Proposition~\ref{prop:Hopf-over-K}\ref{prop:Hopf-over-Ka},
and let
\[
\mu^{\mathcal{V}}_{\mathrm{Hopf}}\colon \mathcal{V}_{s,K}\times_K \mathcal{V}_{s,K}\to \mathcal{V}_{s,K},
\qquad
\iota^{\mathcal{V}}_{\mathrm{Hopf}}\colon \mathcal{V}_{s,K}\to \mathcal{V}_{s,K}
\]
be the induced group law and inverse.

Let $\widehat{A}$ be the $(X_0,\dots,X_{s-1})$-adic completion of $A=\mathcal{O}(\mathcal{V}_s)$, so that
$\Spf(\widehat{A})=\widehat{\mathcal{V}}_{s,R}$ is the formal completion of $\mathcal{V}_s$ along the identity section.

\smallskip
\noindent
(i) Completing $(\Delta^{\mathcal{V}}_K,\varepsilon^{\mathcal{V}}_K,S^{\mathcal{V}}_K)$ at the identity yields a completed Hopf $K$-algebra structure
\[
(\widehat{\Delta}^{\mathcal{V}}_{K},\widehat{\varepsilon}^{\mathcal{V}}_{K},\widehat{S}^{\mathcal{V}}_{K})
\quad\text{on }\widehat{A}_K:=\widehat{A} \otimes_R K,
\]
where $\widehat{\Delta}^{\mathcal{V}}_{K}\colon \widehat{A}_K\to
\widehat{A}_K\widehat{\otimes}_{K}\widehat{A}_K$.

\smallskip
\noindent
(ii) Let
\[
\widehat{\tau}\colon \widehat{\mathcal{V}}_{s,R}\stackrel{\sim}{\longrightarrow}\widehat{W}_{s,R}
\]
denote the restriction of the coordinate map $\tau$ (cf.\ Lemma~\ref{lem:tau-gamma-formal-iso}) to the formal completions.
Pulling back Witt addition (and inverse) along $\widehat{\tau}$ yields a formal group law
$(\widehat{\mu}_\tau,\widehat{\iota}_\tau)$ on $\widehat{\mathcal{V}}_{s,R}$, equivalently a completed Hopf $R$-algebra structure
\[
(\widehat{\Delta}_\tau,\widehat{\varepsilon}_\tau,\widehat{S}_\tau)
\quad\text{on }\widehat{A}.
\]
Using the canonical identification
\[
(\widehat{A}\widehat{\otimes}_R \widehat{A})\otimes_R K
\;\cong\;
\widehat{A}_K\widehat{\otimes}_K \widehat{A}_K
\]
(where the completion is with respect to
$J_K=(X_0,\dots,X_{s-1})\otimes 1 + 1\otimes (X_0,\dots,X_{s-1})$),
we may view
$(\widehat{\Delta}_\tau,\widehat{\varepsilon}_\tau,\widehat{S}_\tau)\otimes_R K$
as a completed Hopf $K$-algebra structure on $\widehat{A}_K$. Then on $\widehat{A}_K$
\[
(\widehat{\Delta}^{\mathcal{V}}_{K},\widehat{\varepsilon}^{\mathcal{V}}_{K},\widehat{S}^{\mathcal{V}}_{K})
=
(\widehat{\Delta}_\tau,\widehat{\varepsilon}_\tau,\widehat{S}_\tau)\otimes_R K.
\]
equivalently the formal completion of $(\mu^{\mathcal{V}}_{\mathrm{Hopf}},\iota^{\mathcal{V}}_{\mathrm{Hopf}})$
at the identity coincides with $(\widehat{\mu}_\tau,\widehat{\iota}_\tau)$.

\item\label{prop:compat-group-lawb}
The analogous statement holds for $\mathcal{W}_{s}$, replacing
$\tau,G_{j+1},U_j,A_K,p\lambda$ by $\gamma,E_{j+1},Z_j,B_K,\lambda$.
\end{enumerate}
\end{proposition}

\begin{proof}
We prove \ref{prop:compat-group-lawa}; the proof of \ref{prop:compat-group-lawb} is identical with the indicated replacements.

Recall that $\widehat{A}$ is the $I_{\mathcal V}$-adic completion of $A$, and
$\widehat{A}\cong R\llbracket X_0,\ldots,X_{s-1}\rrbracket$.

\smallskip\noindent
\emph{Completion of the Hopf structure over $K$.}
Let $I_{\mathcal V,K}:=\ker(\varepsilon^{\mathcal{V}}_K)=(X_0,\ldots,X_{s-1})\subset A_K$.
Since $(A_K,\Delta^{\mathcal{V}}_K,\varepsilon^{\mathcal{V}}_K,S^{\mathcal{V}}_K)$ is a Hopf $K$-algebra, we have
\[
\Delta^{\mathcal{V}}_K(I_{\mathcal V,K})\subset I_{\mathcal V,K}\otimes_K A_K \;+\; A_K\otimes_K I_{\mathcal V,K},
\qquad
S^{\mathcal{V}}_K(I_{\mathcal V,K})\subseteq I_{\mathcal V,K}.
\]
Hence $\Delta^{\mathcal{V}}_K$ and $S^{\mathcal{V}}_K$ are continuous for the $I_{\mathcal V,K}$-adic topology, and $\varepsilon^{\mathcal{V}}_K$ kills $I_{\mathcal V,K}^n$ for all $n$.
Therefore $\Delta^{\mathcal{V}}_K,\varepsilon^{\mathcal{V}}_K,S^{\mathcal{V}}_K$ extend uniquely to continuous maps
\[
\widehat{\Delta}^{\mathcal{V}}_K\colon \widehat{A}_K\to \widehat{A}_K\widehat{\otimes}_K\widehat{A}_K,
\qquad
\widehat{\varepsilon}^{\mathcal{V}}_K\colon \widehat{A}_K\to K,
\qquad
\widehat{S}^{\mathcal{V}}_K\colon \widehat{A}_K\to \widehat{A}_K,
\]
giving a completed Hopf $K$-algebra structure $(\widehat{\Delta}^{\mathcal{V}}_K,\widehat{\varepsilon}^{\mathcal{V}}_K,\widehat{S}^{\mathcal{V}}_K)$ on $\widehat{A}_K$.

\smallskip\noindent
\emph{Pullback of Witt addition.}
By Proposition~\ref{prop:formal-group-structure}, the restriction of $\tau$ to the formal completions is an isomorphism
\[
\widehat{\tau}\colon \widehat{\mathcal{V}}_{s,R}\stackrel{\sim}{\longrightarrow}\widehat{W}_{s,R}.
\]
Transporting Witt addition (and inverse) along $\widehat{\tau}$ yields the completed Hopf $R$-algebra structure $(\widehat{\Delta}_\tau,\widehat{\varepsilon}_\tau,\widehat{S}_\tau)$ on $\widehat{A}$; base-changing to $K$ gives the corresponding completed Hopf $K$-algebra structure $(\widehat{\Delta}_\tau,\widehat{\varepsilon}_\tau,\widehat{S}_\tau) \otimes_R K$ on $\widehat{A}_K$.

\smallskip
\noindent\emph{Claim.} For each $0\le j\le s-1$,
\[
\widehat{\Delta}_\tau(U_j)=U_j\widehat{\otimes}_R U_j,\qquad
\widehat{\varepsilon}_\tau(U_j)=1,\qquad
\widehat{S}_\tau(U_j)=U_j^{-1}.
\]

\smallskip
\noindent\emph{Proof of the claim.}
Let $S$ be a complete local $R$-algebra and take $x_1,x_2\in \widehat{\mathcal{V}}_{s,R}(S)$.
Set $x_3:=\widehat{\mu}_\tau(x_1,x_2)$. By definition,
\[
\widehat{\tau}(x_3)=\widehat{\tau}(x_1)+\widehat{\tau}(x_2)\qquad\text{in }W_s(S),
\]
where $+$ denotes Witt addition. Writing $\widehat{\tau}(x_i)=(X'_{0,i},\dots,X'_{s-1,i})$,
the defining property of $\widehat{\tau}$ gives
\[
U_j(x_i)=G_{j+1,p}\bigl(X'_{0,i},\dots,X'_{j,i}\bigr)\in S^\times.
\]
Indeed, $U_j=\exp(p\lambda X_j')\,G_{j+1,p}(X_0',\ldots,X_{j-1}',0)=G_{j+1,p}(X_0',\ldots,X_j')$ by \eqref{eqnGscommutative}.
Using the multiplicativity of $G_{j+1,p}$ with respect to Witt addition (cf.\ \eqref{eqnGscommutative}) yields
$U_j(x_3)=U_j(x_1)U_j(x_2)$, which is exactly the group-like identity
$\widehat{\Delta}_\tau(U_j)=U_j\widehat{\otimes}U_j$.

For the counit, let $e$ be the identity section. Since $x=\widehat{\mu}_\tau(x,e)$ for all $x$,
the multiplicativity with $x_2=e$ gives $U_j(x)=U_j(x)\,U_j(e)$ for all $x$, hence $U_j(e)=1$.
By definition of the counit, $\widehat{\varepsilon}_\tau(U_j)=U_j(e)=1$.

For the antipode, let $\widehat{\iota}_\tau$ be the inverse. Since $\widehat{\mu}_\tau(x,\widehat{\iota}_\tau(x))=e$,
multiplicativity gives $1=U_j(e)=U_j(x)\,U_j(\widehat{\iota}_\tau(x))$, hence
$U_j(\widehat{\iota}_\tau(x))=U_j(x)^{-1}$ for all $x$. This is exactly $\widehat{S}_\tau(U_j)=U_j^{-1}$.
\qed

\smallskip
Now compare the two completed Hopf structures on $\widehat{A}_K$.
The completed Hopf $K$-algebra $(\widehat{\Delta}^{\mathcal{V}}_K,\widehat{\varepsilon}^{\mathcal{V}}_K,\widehat{S}^{\mathcal{V}}_K)$ on $\widehat{A}_K$
has each $U_j$ group-like by construction, and the base-changed structure
$(\widehat{\Delta}_\tau,\widehat{\varepsilon}_\tau,\widehat{S}_\tau)\otimes_R K$ also has each $U_j$ group-like by the claim.
Recall that for $j=0,\ldots,s-1$ we have in $\widehat{A}_K$
\[
X_j=\frac{U_j-G_{j+1}(X_0,\ldots,X_{j-1})}{p\lambda},
\]
and note that $p\lambda\in K^\times$.
We show by induction on $j$ that the two Hopf $K$-algebra structures coincide on $X_j$ (for $\widehat{\Delta}$, $\widehat{\varepsilon}$, and $\widehat{S}$).
For $j=0$ this follows from the displayed identity and the fact that both structures agree on $U_0$ (and send $1$ to $1$).
Assume the structures coincide on $X_0,\ldots,X_{j-1}$.
Since $G_{j+1}$ is a polynomial in $X_0,\ldots,X_{j-1}$ with coefficients in $K$, functoriality of $\widehat{\Delta}$, $\widehat{\varepsilon}$, and $\widehat{S}$ with respect to polynomials implies they also coincide on $G_{j+1}(X_0,\ldots,X_{j-1})$.
Together with the fact that both structures send $U_j$ to a group-like element, the displayed identity then forces the two structures to coincide on $X_j$.
Hence they coincide on all generators $X_0,\ldots,X_{s-1}$, and therefore on $\widehat{A}_K=K\llbracket X_0,\ldots,X_{s-1}\rrbracket$.
\end{proof}

\begin{lemma}[Intersection lemma]\label{lem:intersection}
\begin{enumerate}[label=\textup{(\alph*)},leftmargin=2.2em]
\item\label{lem:intersectiona}
Let $\widehat A$ be the $(X_0,\dots,X_{s-1})$-adic completion of $A$, and set
$S_A:=\widehat A_K\widehat\otimes_K \widehat A_K$.
Then:
\begin{enumerate}[label=\textup{(\roman*)}]
\item inside $\widehat A_K=\widehat A\otimes_R K$, one has $A_K\cap \widehat A = A$;
\item inside $S_A$, one has $(A_K\otimes_K A_K)\ \cap\ (\widehat A\widehat\otimes_R \widehat A) = A\otimes_R A$.
\end{enumerate}

\item\label{lem:intersectionb}
Let $\widehat B$ be the $(Y_0,\dots,Y_{s-1})$-adic completion of $B$, and set
$S_B:=\widehat B_K\widehat\otimes_K \widehat B_K$.
Then:
\begin{enumerate}[label=\textup{(\roman*)}]
\item inside $\widehat B_K=\widehat B\otimes_R K$, one has $B_K\cap \widehat B = B$;
\item inside $S_B$, one has $(B_K\otimes_K B_K)\ \cap\ (\widehat B\widehat\otimes_R \widehat B) = B\otimes_R B$.
\end{enumerate}
\end{enumerate}
\end{lemma}

\begin{proof}
We prove \ref{lem:intersectiona}; the proof of \ref{lem:intersectionb} is identical (see the end).

\smallskip

\noindent\emph{Step 0: identify the completions.}
By the discussion of coordinate rings and completions above, the $I_{\mathcal V}$-adic completion $\widehat A$ identifies with
\[
\widehat A \cong R\llbracket X_0,\ldots,X_{s-1}\rrbracket,
\qquad
\widehat A_K=\widehat A\otimes_R K \cong K\llbracket X_0,\ldots,X_{s-1}\rrbracket.
\]

\smallskip

\noindent\emph{Proof of \ref{lem:intersectiona}(i).}
Work inside $\widehat A_K\cong K\llbracket X_0,\ldots,X_{s-1}\rrbracket$.
Let $f\in A_K\cap \widehat A$.
Since $A_K$ is the localization of $K[X_0,\ldots,X_{s-1}]$ at the multiplicative set generated by $U_0,\ldots,U_{s-1}$, we may (after clearing denominators) write $f$ in $A_K$ as
\[
f=\frac{g}{u},
\qquad
g\in K[X_0,\ldots,X_{s-1}],
\quad u=\prod_{j=0}^{s-1}U_j^{n_j}\ \text{for some }n_j\ge 0.
\]
Then $u\in R[X_0,\ldots,X_{s-1}]$ and $u\equiv 1\ (\mathrm{mod}\ I_{\mathcal V})$.
Hence $u$ is a unit in $\widehat A$.
Since $f\in \widehat A$, we get
\[
g=f\cdot u\in \widehat A \cong R\llbracket X_0,\ldots,X_{s-1}\rrbracket.
\]
But $g$ is a polynomial with coefficients in $K$, so the condition
$g\in R\llbracket X_0,\ldots,X_{s-1}\rrbracket$ forces all coefficients of $g$
to lie in $R$, i.e. $g\in R[X_0,\ldots,X_{s-1}]$.
Therefore $f=g/u\in A$.
The reverse inclusion $A\subseteq A_K\cap \widehat A$ is obvious, so
$A_K\cap \widehat A=A$.

\smallskip

\noindent\emph{Proof of \ref{lem:intersectiona}(ii).}
Let $X_i^{(1)}:=X_i\otimes 1$ and $X_i^{(2)}:=1\otimes X_i$ in $A\otimes_R A$, and set
\[
J_A:=(X_0^{(1)},\ldots,X_{s-1}^{(1)},X_0^{(2)},\ldots,X_{s-1}^{(2)}).
\]
As above, $U_j^{(1)}:=U_j\otimes 1$ and $U_j^{(2)}:=1\otimes U_j$ satisfy
$U_j^{(1)}\equiv 1\equiv U_j^{(2)}\ (\mathrm{mod}\ J_A)$, hence are units in the
$J_A$-adic completion. Consequently,
\[
\widehat A\widehat\otimes_R \widehat A
\cong R\llbracket X_0^{(1)},\ldots,X_{s-1}^{(1)},X_0^{(2)},\ldots,X_{s-1}^{(2)}\rrbracket,
\]
and after base change,
\[
S_A=\widehat A_K\widehat\otimes_K \widehat A_K
\cong K\llbracket X_0^{(1)},\ldots,X_{s-1}^{(1)},X_0^{(2)},\ldots,X_{s-1}^{(2)}\rrbracket.
\]
Inside $S_A$ we have
\[
A_K\otimes_K A_K
=
K[X_0^{(1)},\ldots,X_{s-1}^{(1)},X_0^{(2)},\ldots,X_{s-1}^{(2)}]
\bigl[(U_0^{(1)})^{-1},\ldots,(U_{s-1}^{(1)})^{-1},(U_0^{(2)})^{-1},\ldots,(U_{s-1}^{(2)})^{-1}\bigr].
\]

Now let
\[
f\in (A_K\otimes_K A_K)\cap(\widehat A\widehat\otimes_R \widehat A)
\subseteq S_A.
\]
As above, since $A_K\otimes_K A_K$ is a localization of the polynomial ring
$K[\underline{X}^{(1)},\underline{X}^{(2)}]$ at the multiplicative set generated by the units
$U_j^{(1)}$ and $U_j^{(2)}$, we may (after clearing denominators) write
\[
f=\frac{g}{u},
\qquad
g\in K[X_0^{(1)},\ldots,X_{s-1}^{(1)},X_0^{(2)},\ldots,X_{s-1}^{(2)}],
\]
with
\[u=\prod_{j=0}^{s-1}(U_j^{(1)})^{n_j}\prod_{j=0}^{s-1}(U_j^{(2)})^{m_j}
\quad\text{for some }n_j,m_j\ge 0.
\]
Then $u\in R[\underline{X}^{(1)},\underline{X}^{(2)}]$ and $u\equiv 1\ (\mathrm{mod}\ J_A)$,
so $u$ is a unit in $\widehat A\widehat\otimes_R \widehat A$.
Since $f\in \widehat A\widehat\otimes_R \widehat A$, we get
\[
g=f\cdot u \in \widehat A\widehat\otimes_R \widehat A
\cong R\llbracket X_0^{(1)},\ldots,X_{s-1}^{(1)},X_0^{(2)},\ldots,X_{s-1}^{(2)}\rrbracket.
\]
But $g$ is a polynomial over $K$, hence the above containment forces all its
coefficients to lie in $R$, i.e.
\[
g\in R[X_0^{(1)},\ldots,X_{s-1}^{(1)},X_0^{(2)},\ldots,X_{s-1}^{(2)}].
\]
Therefore $f=g/u$ lies in
\[
R[\underline{X}^{(1)},\underline{X}^{(2)}]
\bigl[(U_0^{(1)})^{-1},\ldots,(U_{s-1}^{(1)})^{-1},(U_0^{(2)})^{-1},\ldots,(U_{s-1}^{(2)})^{-1}\bigr]
=A\otimes_R A.
\]
The reverse inclusion $A\otimes_R A\subseteq (A_K\otimes_K A_K)\cap(\widehat A\widehat\otimes_R \widehat A)$
is clear, hence equality holds.

\smallskip

\noindent\emph{Proof of \ref{lem:intersectionb}.}
Replace $(X_j,U_j,A,\widehat A)$ by $(Y_j,Z_j,B,\widehat B)$ everywhere.
The only input needed is that each $Z_j$ has constant term $1$, so
$Z_j\equiv 1\ (\mathrm{mod}\ (Y_0,\ldots,Y_{s-1}))$ and thus becomes a unit in the
$(Y_0,\ldots,Y_{s-1})$-adic completion; then the same argument applies verbatim.
\end{proof}

\begin{proposition}[Descent from $K$ to $R$]\label{prop:descent-alt}
\begin{enumerate}[label=\textup{(\alph*)},leftmargin=2.2em]
\item\label{prop:descent-alta}
The Hopf $K$-algebra structure $(A_K,\Delta^{\mathcal{V}}_K,\varepsilon^{\mathcal{V}}_K,S^{\mathcal{V}}_K)$ on $\mathcal{O}(\mathcal{V}_{s,K})$ preserves the $R$-lattice $A$:
\[
\Delta^{\mathcal{V}}_K(A)\subseteq A\otimes_R A,\qquad
\varepsilon^{\mathcal{V}}_K(A)\subseteq R,\qquad
S^{\mathcal{V}}_K(A)\subseteq A.
\]

\item\label{prop:descent-altb}
The Hopf $K$-algebra structure $(B_K,\Delta_K^{\mathcal{W}},\varepsilon_K^{\mathcal{W}},S_K^{\mathcal{W}})$ on $\mathcal{O}(\mathcal{W}_{s,K})$ preserves the $R$-lattice $B$:
\[
\Delta_K^{\mathcal{W}}(B)\subseteq B\otimes_R B,\qquad
\varepsilon_K^{\mathcal{W}}(B)\subseteq R,\qquad
S_K^{\mathcal{W}}(B)\subseteq B.
\]
\end{enumerate}
\end{proposition}

\begin{proof}
We prove \ref{prop:descent-alta}; the proof of \ref{prop:descent-altb} is formally identical (see below).

Recall $A=R[X_0,\ldots,X_{s-1}][U_0^{-1},\ldots,U_{s-1}^{-1}]$ and $A_K=A\otimes_R K$.

\smallskip
\noindent\emph{Step 1: The generators $X_j$.}
For each $0\le j\le s-1$, we claim $\Delta^{\mathcal{V}}_K(X_j)\in A\otimes_R A$.
Indeed, $\Delta^{\mathcal{V}}_K(X_j)\in A_K\otimes_K A_K$ since $\Delta^{\mathcal{V}}_K$ is a $K$-algebra map.

On the other hand, by Proposition~\ref{prop:compat-group-law}\ref{prop:compat-group-lawa}, the completed comultiplication
$\widehat{\Delta}^{\mathcal{V}}_K$ on $\widehat{A}_K$ coincides with the pullback of the formal group law $\widehat{\Delta}_\tau\otimes_R K$.
In particular, the image of $\Delta^{\mathcal{V}}_K(X_j)$ under the natural completion map
\[
A_K\otimes_K A_K \longrightarrow \widehat{A}_K\widehat{\otimes}_K\widehat{A}_K
\]
is precisely $\widehat{\Delta}^{\mathcal{V}}_K(X_j)$ (this is the defining compatibility of a continuous map with its completion).
Thus
\[
\widehat{\Delta}^{\mathcal{V}}_K(X_j)=\widehat{\Delta}_\tau(X_j)\in \widehat{A}\widehat{\otimes}_R\widehat{A}.
\]
Viewing both inside $S=\widehat{A}_K\widehat{\otimes}_K\widehat{A}_K$, we get
\[
\Delta^{\mathcal{V}}_K(X_j) \in (A_K\otimes_K A_K)\cap(\widehat{A}\widehat{\otimes}_R\widehat{A}),
\]
so Lemma~\ref{lem:intersection} yields $\Delta^{\mathcal{V}}_K(X_j)\in A\otimes_R A$.

\smallskip
\noindent\emph{Step 2: The units $U_j$ and $U_j^{-1}$.}
By construction of the Hopf structure over $K$ (Proposition~\ref{prop:Hopf-over-K}\ref{prop:Hopf-over-Ka}),
each $U_j$ is group-like, so
\[
\Delta^{\mathcal{V}}_K(U_j)=U_j\otimes U_j\in A\otimes_R A,\qquad
\varepsilon^{\mathcal{V}}_K(U_j)=1\in R,\qquad
S^{\mathcal{V}}_K(U_j)=U_j^{-1}\in A.
\]
Hence also $\Delta^{\mathcal{V}}_K(U_j^{-1})=(U_j\otimes U_j)^{-1}=U_j^{-1}\otimes U_j^{-1}\in A\otimes_R A$.

\smallskip
\noindent\emph{Step 3: Conclusion for all of $A$.}
Since $A$ is generated as an $R$-algebra by the elements $X_j$ and $U_j^{\pm1}$,
Steps 1--2 imply that $\Delta^{\mathcal{V}}_K$ sends each generator to $A\otimes_R A$; hence
$\Delta^{\mathcal{V}}_K(A)\subseteq A\otimes_R A$.
Similarly, $\varepsilon^{\mathcal{V}}_K(X_j)=0\in R$ and $\varepsilon^{\mathcal{V}}_K(U_j^{\pm1})=1\in R$, so
$\varepsilon^{\mathcal{V}}_K(A)\subseteq R$.

For the antipode, we already know $S^{\mathcal{V}}_K(U_j^{\pm1})\in A$.
For the generators $X_j$, Proposition~\ref{prop:compat-group-law}\ref{prop:compat-group-lawa} implies that
the completed antipode $\widehat S^{\mathcal{V}}_K$ on $\widehat A_K$ coincides with
$\widehat S_\tau\otimes_R K$. Since $\widehat S_\tau\colon \widehat A\to \widehat A$ is defined over $R$, we have
$\widehat S^{\mathcal{V}}_K(X_j)=\widehat S_\tau(X_j)\in \widehat A$.

Since $S^{\mathcal{V}}_K(X_j)\in A_K$ (as $S^{\mathcal{V}}_K\colon A_K\to A_K$), we compare it with its image in $\widehat A_K$.
Under $\widehat A_K\cong K\llbracket X_0,\ldots,X_{s-1}\rrbracket$, each $U_j$ has constant term $1$, hence is a unit; therefore the localization map
\[
A_K=K[X_0,\ldots,X_{s-1}][U_0^{-1},\ldots,U_{s-1}^{-1}] \longrightarrow \widehat A_K
\]
is injective (because $K[X_0,\ldots,X_{s-1}]\hookrightarrow K\llbracket X_0,\ldots,X_{s-1}\rrbracket$ is injective). Hence $S^{\mathcal{V}}_K(X_j)$ lies in $A_K\cap \widehat A$.
Lemma~\ref{lem:intersection} then forces $S^{\mathcal{V}}_K(X_j) \in A$.
Thus $S^{\mathcal{V}}_K(A)\subseteq A$.

\medskip
\noindent\emph{Proof of \ref{prop:descent-altb}.}
The proof is identical to \ref{prop:descent-alta} upon replacing $A, X_j, U_j$ with $B, Y_j, Z_j$ and invoking Proposition~\ref{prop:compat-group-law}\ref{prop:compat-group-lawb}.
In particular, for each $0\le j\le s-1$ we have $\Delta_K^{\mathcal{W}}(Y_j)\in B_K\otimes_K B_K$ and its image in
$S_B=\widehat{B}_K\widehat{\otimes}_K\widehat{B}_K$ equals the formally defined comultiplication on $\widehat{B}$ (pulled back via $\gamma$),
which lies in $\widehat{B}\widehat{\otimes}_R\widehat{B}$. Hence
\[
\Delta_K^{\mathcal{W}}(Y_j)\in (B_K\otimes_K B_K)\cap(\widehat{B}\widehat{\otimes}_R\widehat{B})
\subseteq S_B,
\]
and Lemma~\ref{lem:intersection}\,\ref{lem:intersectionb}(ii) gives $\Delta_K^{\mathcal{W}}(Y_j)\in B\otimes_R B$.
The antipode case is analogous using Lemma~\ref{lem:intersection}\,\ref{lem:intersectionb}(i).
\end{proof}

By Proposition~\ref{prop:descent-alt}, the Hopf $K$-algebra structures on $A_K$ and $B_K$ descend to Hopf $R$-algebra structures on $A$ and $B$.
From now on, let $(\Delta_A,\varepsilon_A,S_A)$ and $(\Delta_B,\varepsilon_B,S_B)$ denote these descended Hopf $R$-algebra structure maps (whose base change to $K$ recovers the structures of Proposition~\ref{prop:Hopf-over-K}).

Let us now consider the {\'e}tale $R$-morphism $\psi_s:\mathcal{W}_s\to\mathcal{V}_s$. 

\begin{lemma}\label{lemmapsigroupmorphism}
The morphism $\psi_s:\mathcal{W}_s\to\mathcal{V}_s$ is a homomorphism of group schemes over $R$.
\end{lemma}

\begin{proof}
Let
\[
\varphi:=\psi_s^\#\colon A\longrightarrow B
\]
be the induced $R$-algebra homomorphism.

We show that $\varphi$ is a morphism of Hopf $R$-algebras. Let $I_{\mathcal V}:=\ker(\varepsilon_A)\subset A$ and
$I_{\mathcal W}:=\ker(\varepsilon_B)\subset B$ be the augmentation ideals.
By Lemma~\ref{lem:intersection}\,\ref{lem:intersectiona}(i) and Lemma~\ref{lem:intersection}\,\ref{lem:intersectionb}(i),
the natural maps
\[
A\hookrightarrow \widehat{A},\qquad B\hookrightarrow \widehat{B}
\]
into the $I_{\mathcal V}$-adic and $I_{\mathcal W}$-adic completions are injective.

\smallskip
\noindent\emph{Claim.} One has $\varphi(I_{\mathcal V})\subset I_{\mathcal W}$.

\smallskip
\noindent\textit{Proof of the claim.}
We prove by induction on $j=0,1,\ldots,s-1$ that $\varphi(X_j)\in I_{\mathcal W}$.
Since $I_{\mathcal V}=(X_0,\ldots,X_{s-1})$, this implies $\varphi(I_{\mathcal V})\subset I_{\mathcal W}$.

\smallskip
\noindent\emph{Base case $j=0$.}
By construction of $\psi_s$ (cf.\ the explicit computation in \eqref{eqnASWtype1simplified}), we have
\[
\varphi(X_0)=S_0(Y_0)\in R\Bigl[Y_0,\frac{1}{1+\lambda Y_0}\Bigr]\subset B,
\]
and $S_0(Y_0)$ has zero constant term. Hence $\varphi(X_0)\in (Y_0)\subset I_{\mathcal W}$.

\smallskip
\noindent\emph{Inductive step.}
Fix $1\le j\le s-1$. Consider the analogue of \eqref{eqnminimalpoly} with $s-1$ replaced by $j$, i.e.
the defining relation for $X_j$, viewed in $B$ via $\varphi$:
\begin{equation}\label{eqnminimalpoly-j}
\frac{\lambda^{p-1}}{p}Y_j^p+Y_j+H_{j+1}(Y_0,\ldots,Y_j)
-\bigl(E_j(Y_0,\ldots,Y_{j-2})+\lambda Y_{j-1}\bigr)\,\varphi(X_j)=0.
\end{equation}
(Here $H_{j+1}$ and $E_j$ are the polynomials obtained by the same construction as in
Proposition~\ref{propminimialpoly}, with the usual convention that
$E_1(\,)=1$ when $j=1$.)

Reduce \eqref{eqnminimalpoly-j} modulo $I_{\mathcal W}=(Y_0,\ldots,Y_{s-1})$.
Then $Y_j\equiv 0$ and $Y_j^p\equiv 0$, and $H_{j+1}(Y_0,\ldots,Y_j)\equiv 0$
since $H_{j+1}\in (Y_0,\ldots,Y_j)$ (in particular $H_{j+1}(0,\ldots,0)=0$). Moreover,
\[
E_j(Y_0,\ldots,Y_{j-2})+\lambda Y_{j-1}\equiv E_j(0,\ldots,0)=1\pmod{I_{\mathcal W}}.
\]
Therefore \eqref{eqnminimalpoly-j} reduces to
\[
-\overline{\varphi(X_j)}=0\qquad\text{in }B/I_{\mathcal W},
\]
so $\varphi(X_j)\in I_{\mathcal W}$.
This completes the induction and proves the claim.
\qed

\smallskip
Since $\varphi(I_{\mathcal V})\subset I_{\mathcal W}$, the map $\varphi$ is continuous for the $I_{\mathcal V}$-adic
topology on $A$ and the $I_{\mathcal W}$-adic topology on $B$, hence induces a unique continuous map on completions
\[
\widehat{\varphi}\colon \widehat{A}\longrightarrow \widehat{B}.
\]
By the construction of $\psi_s$, its restriction to the formal completions along the identity sections satisfies
\[
\widehat{\psi}_s=\widehat{\tau}^{-1}\circ \widehat{\gamma},
\]
so on completed coordinate rings
\[
\widehat{\varphi}
=
\widehat{\gamma}^{\#}\circ (\widehat{\tau}^{\#})^{-1}
\colon \widehat{A}\longrightarrow \widehat{B}.
\]
By Proposition~\ref{prop:formal-group-structure} (applied to $\tau$) and its analogue for $\gamma$,
the maps $\widehat{\tau}$ and $\widehat{\gamma}$ are isomorphisms of formal group schemes over $R$; moreover,
by Proposition~\ref{prop:compat-group-law}, the Hopf structure on completed local rings agrees with the completion
of the algebraic Hopf structure. Consequently, $\widehat{\varphi}$ is a morphism of completed Hopf $R$-algebras:
\[
\widehat{\Delta}_B\circ \widehat{\varphi}
=
(\widehat{\varphi}\,\widehat{\otimes}\,\widehat{\varphi})\circ \widehat{\Delta}_A,
\qquad
\widehat{\varepsilon}_B\circ \widehat{\varphi}
=
\widehat{\varepsilon}_A.
\]

We now descend these identities to the algebraic level. Consider the two $R$-algebra maps
\[
\Delta_B\circ \varphi,
\quad (\varphi\otimes \varphi)\circ \Delta_A
\;\colon\;
A \longrightarrow B\otimes_R B.
\]
Let $I_{B\otimes B}:=I_{\mathcal W}\otimes_R B + B\otimes_R I_{\mathcal W}$, the augmentation ideal of $B\otimes_R B$.
After $I_{B\otimes B}$-adic completion, these two maps become equal because $\widehat{\varphi}$ respects comultiplication.

By Lemma~\ref{lem:intersection}\,\ref{lem:intersectionb}(ii), the natural map
\[
B\otimes_R B \longrightarrow \widehat{B}\,\widehat{\otimes}_R\,\widehat{B}
\]
is injective (viewing both rings inside $\widehat{B}_K\widehat{\otimes}_K\widehat{B}_K$).
Hence the two maps were already equal before completion:
\[
\Delta_B\circ \varphi = (\varphi\otimes \varphi)\circ \Delta_A.
\]
Similarly, the maps $\varepsilon_B\circ \varphi$ and $\varepsilon_A$ agree after completion, hence agree on $A$:
\[
\varepsilon_B\circ \varphi=\varepsilon_A.
\]
Therefore $\varphi$ is a morphism of Hopf $R$-algebras, and thus $\psi_s$ is a homomorphism of group schemes over $R$.
\end{proof}

\begin{lemma}\label{lem:kernel_finite_flat}
The kernel $\ker(\psi_s)$ is finite flat over $R$ of rank $p^s$.
\end{lemma}

\begin{proof}
Since $\psi_s$ is a homomorphism of group schemes, its kernel is the fiber over the identity section $e: \operatorname{Spec} R \to \mathcal{V}_s$:
\[
\ker(\psi_s) = \mathcal{W}_s \times_{\mathcal{V}_s, e} \operatorname{Spec} R.
\]
This is the base change of the finite flat morphism $\psi_s$ (Proposition~\ref{propetalereduction}) along $e$, hence $\ker(\psi_s)$ is finite flat over $R$.

For the rank, base change to the special fiber gives
\[
\ker(\psi_s)\otimes_R \mathbb{F}_p \cong \ker(\wp:W_s\to W_s),
\]
by Corollary~\ref{corspecialunr}. The latter kernel is the constant group scheme
$\underline{\mathbb{Z}/p^s} \cong W_s(\mathbb{F}_p)$, hence has rank $p^s$.
Since $R$ is a DVR, $\spec R$ is connected. For a finite flat $R$-scheme, the rank (equivalently, the degree) is locally constant on the base, hence constant on $\spec R$. Therefore, because the special fiber has rank $p^s$, we also have
$\operatorname{rank}_R \ker(\psi_s) = p^s$.
\end{proof}

\begin{proposition}\label{propisogenyoverR}
The homomorphism $\psi_s:\mathcal{W}_s\to\mathcal{V}_s$ is an isogeny of smooth
commutative group schemes over $R$; its kernel is finite flat of rank $p^s$.
\end{proposition}

\begin{proof}
By Lemma~\ref{lemmapsigroupmorphism}, the morphism $\psi_s$ is a homomorphism of group schemes over $R$.

Both $\mathcal{V}_s$ and $\mathcal{W}_s$ are smooth over $R$, since each is an open subscheme of an affine space over $R$ by definition.
Moreover, by Proposition~\ref{propetalereduction} the morphism $\psi_s$ is finite étale; in particular, it is finite. On affine coordinate rings this gives an integral extension
\[
A=\mathcal O(\mathcal V_s)\hookrightarrow B=\mathcal O(\mathcal W_s),
\]
and the map is injective since both embed in the same function field extension (with $B$ the integral closure of $A$ there). Hence, by lying-over, $\spec(B)\to\spec(A)$ is surjective.

Finally, Lemma~\ref{lem:kernel_finite_flat} shows that $\ker(\psi_s)$ is finite flat of rank $p^s$.
Therefore $\psi_s$ is an isogeny of smooth commutative group schemes over $R$.
\end{proof}

The proof of the following result is straightforward.

\begin{corollary}
    With the notations of Corollary \ref{corspecialequation}, the special-fiber morphism $(\psi_s)_k:\mathcal{W}_{s,k}\to\mathcal{V}_{s,k}$ is the Artin--Schreier--Witt isogeny
    \begin{equation*}
        W_s(\mathbb{F}_p) \longrightarrow W_s(\mathbb{F}_p), \quad (y_0, y_1+b_1, \ldots, y_{s-1}+b_{s-1}) \mapsto \wp(y_0, y_1+b_1, \ldots, y_{s-1}+b_{s-1}).
    \end{equation*}
\end{corollary}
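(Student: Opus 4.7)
The plan is to combine three ingredients already assembled in the previous sections: the identifications $\tau: \mathcal{V}_s(R) \to W_s(\hat{R})$ and $\gamma: \mathcal{W}_s(R) \to W_s(\hat{R})$, the fact (from the proposition immediately preceding the corollary) that $\psi_s$ corresponds to the identity on Witt-vector coordinates, and the explicit reduction modulo $\pi$ recorded in Corollary \ref{corspecialequation}. First I would reduce $\tau$ modulo $\pi$: Proposition \ref{propcomputeWittvectorF} gives $X_i' \equiv X_i \pmod{\zeta_{p^{i+1}}-1}$ and hence $X_i' \equiv x_i \pmod{\pi}$, so on the special fibre $\tau$ becomes the tautological bijection sending $(x_0,\ldots,x_{s-1})$ to the Witt vector $(x_0,\ldots,x_{s-1}) \in W_s(\overline{\mathbb{F}}_p)$.

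The key step is the corresponding analysis for $\gamma$. Proposition \ref{propY'} gives
\begin{equation*}
    Y_{i-1}' \;=\; (-1)^p \tfrac{\lambda^{p-1}}{p} Y_{i-1}^p + Y_{i-1} + F_i(Y_0,\ldots,Y_{i-1}),
\end{equation*}
and using $\lambda^{p-1}/p \equiv -1 \pmod{\pi}$ together with the congruence $\overline{F}_i \equiv f_i(y_0, y_1+b_1, \ldots, y_{i-2}+b_{i-2}) + \wp(b_{i-1})$ provided by Corollary \ref{corspecialequation}, one rewrites
\begin{equation*}
    Y_{i-1}' \bmod \pi \;=\; \wp(y_{i-1}+b_{i-1}) + f_i\bigl(y_0, y_1+b_1, \ldots, y_{i-2}+b_{i-2}\bigr),
\end{equation*}
which by \eqref{eqnsystemAS} is precisely the $i$-th coordinate of the Witt vector $\wp(y_0, y_1+b_1, \ldots, y_{s-1}+b_{s-1})$. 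Thus, modulo $\pi$, $\gamma$ is the map $(y_0,\ldots,y_{s-1}) \mapsto \wp(y_0, y_1+b_1, \ldots, y_{s-1}+b_{s-1})$.

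Finally, since $\psi_s$ satisfies $(Y_0',\ldots,Y_{s-1}') = (X_0',\ldots,X_{s-1}')$ whenever $\psi_s((Y_i)) = (X_i)$, reducing this identity modulo $\pi$ and substituting the two displayed reductions yields
\begin{equation*}
    (x_0,\ldots,x_{s-1}) \;=\; \wp\bigl(y_0, y_1+b_1, \ldots, y_{s-1}+b_{s-1}\bigr),
\end{equation*}
which, after the change of variables $z_i := y_i + b_i$ (with $b_0 := 0$), is the standard Artin-Schreier-Witt isogeny $\wp: W_s(\mathbb{F}_p) \to W_s(\mathbb{F}_p)$ as claimed. The one non-trivial point in this argument is the matching of $\overline{F}_i$ to the $i$-th ghost component of $\wp$ after the translation $y_j \mapsto y_j + b_j$; but this bookkeeping was already carried out in Corollary \ref{corspecialequation}, so the proof of the corollary is essentially an assembly of what has been proved.
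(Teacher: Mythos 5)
Your proof is correct, and it is precisely the assembly argument the paper leaves implicit (the corollary is stated without proof in the source). You correctly use Proposition \ref{propcomputeWittvectorF} to reduce $\tau$ to the tautological identification, combine Proposition \ref{propASWtype}/\ref{propY'} with Corollary \ref{corspecialequation} and the congruence $\lambda^{p-1}/p \equiv -1$ to show $\overline{\gamma}(y_0,\ldots,y_{s-1})$ agrees coordinate-wise with $\wp(y_0, y_1+b_1, \ldots, y_{s-1}+b_{s-1})$ via \eqref{eqnsystemAS}, and then close the loop with the identity $\gamma = \tau \circ \psi_s$ on Witt-vector coordinates established in the preceding proposition.

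One small presentational point: for the general index $i$ you should cite Proposition \ref{propASWtype} (the inductive statement over all $i$) rather than Proposition \ref{propY'} (which fixes $i = s$); also, to make the claim that the two coordinate identifications are group isomorphisms fully explicit, it is worth noting that the shift map $(y_0,\ldots,y_{s-1}) \mapsto (y_0, y_1+b_1, \ldots, y_{s-1}+b_{s-1})$ intertwines the group law on $\overline{\mathcal{W}}_s$ (pulled back via $\overline{\gamma}$) with the ordinary Witt addition, since $\overline{\gamma}$ factors through $\wp$ composed with this shift and $\wp$ is itself a homomorphism. Neither point affects the correctness of the argument.
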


\begin{proposition}
\label{propdiagramgroup}
On formal completions along the identity sections, we have a commutative diagram of morphisms of formal group schemes over $R$:
\begin{center}
\begin{tikzcd}
\widehat{\mathcal{W}}_s \arrow[r, "\widehat{\gamma}"] \ar[d, "\widehat{\psi}_s"] & \widehat{W}_s \ar[d, equal] \arrow[r, "\tilde{E}_{s,p}"] & \widehat{(\mathbb{G}_m)^s} \arrow[d, "\widehat{\phi}_s"] \\
\widehat{\mathcal{V}}_s \arrow[r, "\widehat{\tau}"] & \widehat{W}_s \arrow[r, "\tilde{\Psi}_s"] & \widehat{(\mathbb{G}_m)^s}
\end{tikzcd}
\end{center}
Here $\widehat{\phi}_s$ is the morphism on formal completions induced by $\phi_s$ from \eqref{eqphis}.
\end{proposition}

\begin{proof}
The commutativity of the first square follows from the description of the restriction of $\psi_s$ to formal completions,
\[
\widehat{\psi}_s=\widehat{\tau}^{-1}\circ \widehat{\gamma},
\]
(cf.\ Lemma~\ref{lem:tau-gamma-formal-iso}), equivalently $\widehat{\tau}\circ\widehat{\psi}_s=\widehat{\gamma}$.

For the second square, we define the maps $\tilde{E}_{s,p}$ and $\tilde{\Psi}_{s}$ on $\widehat{W}_s$ by
\[
\begin{split}
    (Y_0', \ldots, Y_{s-1}') &\mapsto (E_{1,p}((Y_0')), \ldots, E_{s,p}((Y_0', \ldots, Y_{s-1}'))), \\
    (X_0', \ldots, X_{s-1}') &\mapsto (G_{1,p}((X_0')), \ldots, G_{s,p}((X_0', \ldots, X_{s-1}'))).
\end{split}
\]
The fact that these are homomorphisms of (formal) groups follows from \eqref{eqnEscommutative} and \eqref{eqnGscommutative}.

Moreover, by construction of the formal coordinates (cf.\ \eqref{eqngamma} and \eqref{eqntau}), we have
\[
Z_i = E_{i+1,p}((Y_0',\ldots,Y_i'))
\qquad\text{and}\qquad
U_i
= G_{i+1,p}((X_0',\ldots,X_i'))
\]
for $i=0,\ldots,s-1$.

Recall that $\phi_s$ is defined in \eqref{eqphis}. The commutativity of the second square follows from
\[
    \left(E_{i+1,p}((X_0', \ldots, X_i'))\right)^p
    \left(E_{i,p}((X_0', \ldots, X_{i-1}'))\right)^{-1}
    = G_{i+1,p}((X_0', \ldots, X_i')).
\]
This is exactly the defining relation of $G_{i+1,p}$ in terms of $E_{i+1,p}$
(see the definition preceding \eqref{eqnGscommutative}), together with
$E_{i+1,p}(p\,\underline{a})=E_{i+1,p}(\underline{a})^p$ for Witt multiplication by $p$.
This completes the proof.
\end{proof}

\begin{proposition}
\label{propdiagramgroup-algebraic}
There is a commutative diagram of homomorphisms of algebraic group schemes over $R$:
\begin{center}
\begin{tikzcd}[column sep=large]
\mathcal{W}_s \arrow[r, "{(Z_0,\ldots,Z_{s-1})}"] \ar[d, "\psi_s"] & (\mathbb{G}_m)^s \arrow[d, "\phi_s"] \\
\mathcal{V}_s \arrow[r, "{(U_0,\ldots,U_{s-1})}"] & (\mathbb{G}_m)^s
\end{tikzcd}
\end{center}
where $U_0,\ldots,U_{s-1}\in \mathcal{O}(\mathcal{V}_s)^\times$ and $Z_0,\ldots,Z_{s-1}\in \mathcal{O}(\mathcal{W}_s)^\times$ are the standard units from the definitions of $\mathcal{V}_s$ and $\mathcal{W}_s$.
Here $\phi_s$ is as in \eqref{eqphis}.
This diagram is the algebraic version of Proposition~\ref{propdiagramgroup}: one replaces the formal torus coordinates by the corresponding global units $(Z_0,\ldots,Z_{s-1})$ and $(U_0,\ldots,U_{s-1})$.
\end{proposition}

\begin{proof}
The horizontal arrows are induced by the tuples of units $(Z_0,\ldots,Z_{s-1})$ on $\mathcal{W}_s$
and $(U_0,\ldots,U_{s-1})$ on $\mathcal{V}_s$.
Moreover, by construction of the descended Hopf algebra structures on $\mathcal{O}(\mathcal{W}_s)$ and
$\mathcal{O}(\mathcal{V}_s)$, each $Z_i$ and $U_i$ is group-like (i.e.\ $\Delta_B(Z_i)=Z_i\otimes Z_i$
and $\Delta_A(U_i)=U_i\otimes U_i$), hence these maps to $(\mathbb{G}_m)^s$ are homomorphisms of group schemes.

For the left square, by construction of $\psi_s$ (cf.\ \eqref{eqnKummer2}), in $\mathcal{O}(\mathcal{W}_s)^\times$ we have
\[
U_0=Z_0^p,\qquad U_j=Z_j^p Z_{j-1}^{-1}\ \ (1\le j\le s-1).
\]
Equivalently, $(U_0,\ldots,U_{s-1})=\phi_s(Z_0,\ldots,Z_{s-1})$, so the square commutes.
\end{proof}

This concludes the proof of Theorem \ref{theoremuniversalKASW}. \qed

\section{The Universality}
\label{secuniversality}
In this section, we establish the proof of Theorem \ref{theoremuniversality}. Our methodology follows standard procedures as outlined in \cite[Section 3]{SS:kasw2} and \cite[Section 6.4]{MR3051249}. To begin, we fix some notation. Let $R$ be a finite extension of $\mathbb{Z}_{(p)}[\zeta_{p^s}]$, and let $\pi$ be its uniformizer. Define \(R_{\pi} := R/\pi\), and denote by \(\mathbb{G}_{m,R_\pi}\) the special fiber of \(\mathbb{G}_m := \operatorname{Spec} R\left[X, \frac{1}{X} \right]\). We have a natural closed immersion $\mathbb{G}_{m,R_\pi} \hookrightarrow \mathbb{G}_m$ denoted by $\iota$.

For the computation of \(\mathcal{W}_1 = \operatorname{Spec} R\left[ Y_0, \frac{1}{1+\lambda Y_0} \right]\), the following exact sequence of sheaves on the {\'e}tale site of \(\operatorname{Spec} R\) is particularly useful (see \cite[Lemma 1.1]{MR1011987}):
\begin{align}\label{W1sequence}
    0 \longrightarrow \mathcal{W}_1 \xlongrightarrow{\alpha} \mathbb{G}_{m,R} \xlongrightarrow{\beta} \iota_{*} \mathbb{G}_{m, R_{\pi}} \longrightarrow 0 
\end{align}
where \(\alpha: x \mapsto 1 + \lambda x\) and \(\beta: t \mapsto t \mod \pi\). The following result is derived from \cite[Theorem 2.2]{SS:kasw2}.

\begin{lemma}
     Suppose $R$ contains the $p^s$-th root of unity. If $B$ is a flat local $R$-algebra, then 
     \begin{align*}
         \cohom^1(\spec B, \mathcal{W}_1) = 0.
     \end{align*}
\end{lemma}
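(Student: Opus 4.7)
The plan is to apply the long exact sequence in étale cohomology to the short exact sequence \eqref{W1sequence} after pulling back to the étale site of $\spec B$. Since $B$ is flat over $R$, flat base change preserves the exactness of \eqref{W1sequence}, and the pullback of $\iota_* \mathbb{G}_{m, R_\pi}$ is $\iota'_* \mathbb{G}_{m, B/\pi B}$, where $\iota': \spec(B/\pi B) \hookrightarrow \spec B$ is the base change of $\iota$. Thus we get
\[
0 \rightarrow \mathcal{W}_{1,B} \rightarrow \mathbb{G}_{m,B} \xrightarrow{\beta} \iota'_{*} \mathbb{G}_{m, B/\pi B} \rightarrow 0.
\]

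Taking the associated long exact sequence and identifying $\cohom^0$ on each term with units and $\cohom^1$ of $\mathbb{G}_m$ with the Picard group, the relevant portion reads
\[
B^{\times} \xrightarrow{\beta_*} (B/\pi B)^{\times} \rightarrow \cohom^1(\spec B, \mathcal{W}_1) \rightarrow \operatorname{Pic}(\spec B).
\]
Here we use that $\iota'$ is a closed immersion, so $\iota'_*$ is exact on étale sheaves and $\cohom^0(\spec B, \iota'_* \mathbb{G}_{m, B/\pi B}) = \cohom^0(\spec B/\pi B, \mathbb{G}_m) = (B/\pi B)^{\times}$.

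The plan then reduces to verifying two things. First, $\operatorname{Pic}(\spec B) = 0$, which is immediate from the hypothesis that $B$ is local. Second, $\beta_*$ is surjective: since $R \to B$ is a local homomorphism, $\pi$ is carried into the maximal ideal $\mathfrak{m}_B$, so any preimage in $B$ of a unit of $B/\pi B$ lies outside $\mathfrak{m}_B$ and is thus a unit of $B$. Combining these two facts forces $\cohom^1(\spec B, \mathcal{W}_1) = 0$.

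The only subtle point is the flat-base-change identification of the pullback of $\iota_* \mathbb{G}_{m, R_\pi}$ and the vanishing of higher direct images along the closed immersion $\iota'$, both of which are standard for étale cohomology and rely crucially on the flatness assumption on $B$. Beyond that, everything reduces to the local-ring properties of $B$, and there is no serious computational obstacle.
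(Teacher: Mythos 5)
Your proof follows the same route as the paper: take the long exact cohomology sequence of \eqref{W1sequence} over $\spec B$, use $\cohom^1(\spec B,\mathbb{G}_m)=\operatorname{Pic}(\spec B)=0$, and use surjectivity of $B^\times\to(B/\pi B)^\times$ to kill the relevant cokernel. You are a bit more careful than the printed argument in two places --- you explicitly justify the surjectivity via locality of $R\to B$, and you correctly route the vanishing through $\operatorname{Pic}$ of a local ring rather than citing Hilbert's Theorem~90 as if it directly gave zero --- but the structure is identical.
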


\begin{proof}
By considering the long exact sequence associated with \eqref{W1sequence} relative to \(\spec B\), we obtain the following:
\begin{align*}
    0 \longrightarrow \mathcal{W}_1(B) \longrightarrow \mathbb{G}_m(B) &= B^* \longrightarrow \iota_*\mathbb{G}_{m,R_{\pi}}(B) = (B/(\pi B))^* \longrightarrow \\
    &\longrightarrow \cohom^1(\spec B, \mathcal{W}_1) \longrightarrow \cohom^1(\spec B, \mathbb{G}_m) \longrightarrow \cdots
\end{align*}

By Hilbert's Theorem 90, we know that \(\cohom^1(\spec B, \mathbb{G}_m) = 0\). Given that the map \(f: B^* \to (B/(\pi B))^*\) is surjective, the exactness of the long exact sequence implies that \(\cohom^1(\spec B, \mathcal{W}_1) \cong \operatorname{coker}(f) = 0\).
\end{proof}

We can employ induction by leveraging the following result.

\begin{lemma}
    For any $s > 1$, the following sequence of group schemes
    \begin{align}\label{W_s_exact}
        0 \longrightarrow \mathcal{W}_1 \longrightarrow \mathcal{W}_s \xlongrightarrow{} \mathcal{W}_{s-1} \longrightarrow 0
    \end{align}
    is exact and it fits into the following commutative diagram:
    \begin{center}
         \begin{tikzcd}
        0 \ar[r] & \mathbb{Z}/p \ar[r] \ar[d] & \mathbb{Z}/p^s \ar[r] \ar[d] & \mathbb{Z}/p^{s-1} \ar[r] \ar[d] & 0 \\
        0 \ar[r]  & \mathcal{W}_1 \ar[r] & \mathcal{W}_s \ar[r]  & \mathcal{W}_{s-1} \ar[r]  & 0
    \end{tikzcd}
    \end{center}
\end{lemma}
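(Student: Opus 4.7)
The strategy is to lift the standard short exact sequence of Witt vector group schemes
\begin{equation*}
0 \to W_1 \xrightarrow{V^{s-1}} W_s \xrightarrow{\mathrm{trunc}} W_{s-1} \to 0,
\end{equation*}
with $\mathrm{trunc}(Y_0',\ldots,Y_{s-1}') = (Y_0',\ldots,Y_{s-2}')$ and $V^{s-1}(Z_0') = (0,\ldots,0,Z_0')$, through the coordinatization $\gamma\colon \mathcal{W}_s(\hat R) \xrightarrow{\sim} W_s(\hat R)$ of Section~\ref{secgroupstructure}. Tracing through the inversion formulas, since each $Y_i$ depends only on $Y_0',\ldots,Y_i'$, truncation corresponds to the algebraic morphism $\pi\colon\mathcal{W}_s\to\mathcal{W}_{s-1}$ that forgets the coordinate $Y_{s-1}$, while $V^{s-1}$ corresponds to the closed immersion $\iota\colon\mathcal{W}_1\to\mathcal{W}_s$ sending $Z_0\mapsto(0,\ldots,0,Z_0)$. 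For $\iota$ to be well-defined on coordinate rings, one needs $E_j(0,\ldots,0) = 1$ for all $j$, which follows from Definition~\ref{defnEi}: $E_{j,p}((0,\ldots,0))=1$ and truncation preserves the constant term. Consequently $\iota^{*}\bigl(\tfrac{1}{E_s+\lambda Y_{s-1}}\bigr) = \tfrac{1}{1+\lambda Z_0}$, matching the localization already present in $\mathcal{W}_1$.

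The next step is to verify that $\iota$ and $\pi$ are group scheme homomorphisms over $R$. This is the delicate point, because $\gamma$ is not algebraic, so the group law cannot be transported by direct functoriality. Instead I would argue as follows: both $\iota$ and $\pi$ are polynomial morphisms defined over $\mathbb{Z}$; they agree, via $\gamma$, with $V^{s-1}$ and $\mathrm{trunc}$ on all $\hat R$-points; and the group laws on each $\mathcal{W}_i$ are algebraic over $R$ by the previous section. Since the two competing compositions $\mathcal{W}_s\times_R\mathcal{W}_s\rightrightarrows\mathcal{W}_{s-1}$ are $R$-morphisms of the same affine scheme, and they agree on the (scheme-theoretically dense) subset of $\hat R$-points, they coincide everywhere; the analogous argument handles $\iota$.

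Exactness of \eqref{W_s_exact} then follows quickly: $\iota$ is a closed immersion, hence a monomorphism of group schemes; the scheme-theoretic kernel of $\pi$ is cut out by $Y_0=\cdots=Y_{s-2}=0$, which is precisely the image of $\iota$; and $\pi$ is faithfully flat because $\mathcal{O}_{\mathcal{W}_s}$ is obtained from the polynomial ring $\mathcal{O}_{\mathcal{W}_{s-1}}[Y_{s-1}]$ by inverting $E_s(Y_0,\ldots,Y_{s-2})+\lambda Y_{s-1}$, a polynomial of positive degree in $Y_{s-1}$, so every geometric fiber is a nonempty open subscheme of $\mathbb{A}^1$. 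For the kernel diagram, I would work on the generic fiber, where $\mathbb{Z}/p^s\subset\mathcal{W}_s$ is identified with $\mu_{p^s}$ via Kummer theory: $\iota$ sends $\zeta_p\in\mu_p$ to $\zeta_p=\zeta_{p^s}^{p^{s-1}}\in\mu_{p^s}$, matching multiplication by $p^{s-1}$, and $\pi$ restricts to the $p$-th power map $\mu_{p^s}\to\mu_{p^{s-1}}$, matching the canonical projection $\mathbb{Z}_p/p^s\to\mathbb{Z}_p/p^{s-1}$. The main anticipated obstacle is the density argument in the second paragraph, which promotes the $\hat R$-point identity to an equality of $R$-morphisms; once it is in place, the remaining verifications are essentially bookkeeping on coordinate rings.
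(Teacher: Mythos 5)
Your proposal is correct and follows the same approach as the paper: the projection $\mathcal{W}_s\to\mathcal{W}_{s-1}$ forgetting $Y_{s-1}$ dualizes the inclusion of coordinate rings, with kernel $\operatorname{Spec} R\bigl[Y_{s-1},\tfrac{1}{1+\lambda Y_{s-1}}\bigr]\cong\mathcal{W}_1$ and the top row supplied by the constant subgroup schemes. The paper's proof is just two sentences; yours supplies the details it elides, in particular the check that $E_j(0,\ldots,0)=1$, the density argument promoting the Witt-vector identity on $\hat R$-points to an equality of $R$-morphisms (which works here since each $E_j+\lambda Y_{j-1}\equiv 1\pmod\pi$, so every tuple in $\hat R^{2s}$ is an $\hat R$-point of the fiber product), and the faithful flatness of the projection.
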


\begin{proof}
    The morphism \(\mathcal{W}_s \to \mathcal{W}_{s-1}\) arises from the inclusion of the \(R\)-algebras that define the schemes, with the kernel given by
    \[
    \spec R \left[ Y_{s-1}, \frac{1}{1+\lambda Y_{s-1}} \right] \cong \mathcal{W}_1.
    \]
    Moreover, the Artin--Schreier--Witt theory establishes the desired commutative diagram.
\end{proof}

By induction, we obtain the following result.

\begin{proposition}
    Suppose \(R\) contains the \(p^s\)-th root of unity. If \(B\) is a flat local \(R\)-algebra, then for every $s \geq 1$, we obtain
    \begin{align*}
        \cohom^1(\spec B,\mathcal{W}_s) = 0.
    \end{align*}
\end{proposition}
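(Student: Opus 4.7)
The plan is to induct on $s$. The base case $s=1$ is exactly the preceding lemma, which combines Hilbert's Theorem 90 with the surjectivity of $B^\times \twoheadrightarrow (B/\pi B)^\times$ (automatic because $B$ is local and $\pi$ lies in its maximal ideal) to kill $\cohom^1(\spec B, \mathcal{W}_1)$ via the short exact sequence \eqref{W1sequence}.

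For the inductive step, I would feed the short exact sequence \eqref{W_s_exact} into the long exact sequence of {\'e}tale cohomology over $\spec B$:
\begin{equation*}
    \cdots \to \cohom^1(\spec B, \mathcal{W}_1) \to \cohom^1(\spec B, \mathcal{W}_s) \to \cohom^1(\spec B, \mathcal{W}_{s-1}) \to \cdots
\end{equation*}
The induction hypothesis gives vanishing of the right-hand term, and the previous lemma gives vanishing of the left-hand term; exactness then forces $\cohom^1(\spec B, \mathcal{W}_s)=0$, completing the induction.

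The only place that requires any thought is to confirm that \eqref{W_s_exact} really is exact as a sequence of sheaves on the {\'e}tale site of $\spec B$, so that one is entitled to pass to the long exact sequence. Since $\mathcal{W}_s \to \mathcal{W}_{s-1}$ is an {\'e}tale (in fact, the composition of the isogeny $\psi_s$ structure from Section \ref{secgroupstructure} with a quotient by the subscheme isomorphic to $\mathcal{W}_1$) surjection of group schemes whose scheme-theoretic kernel is $\mathcal{W}_1$, the sequence is exact as a sequence of {\'e}tale sheaves on any base. This is the only nontrivial ingredient; the cohomological bookkeeping above is then automatic.
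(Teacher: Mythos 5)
Your proof is correct and matches the paper's argument exactly: induct on $s$, with the base case given by the preceding lemma, and the inductive step supplied by sandwiching $\cohom^1(\spec B,\mathcal{W}_s)$ between the vanishing $\cohom^1(\spec B,\mathcal{W}_1)$ and $\cohom^1(\spec B,\mathcal{W}_{s-1})$ in the long exact sequence attached to \eqref{W_s_exact}. Your added remark about verifying that \eqref{W_s_exact} is exact as a sequence of \'etale sheaves is a worthwhile sanity check that the paper delegates to the preceding lemma establishing that sequence.
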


\begin{proof}
    Assume the induction hypothesis that $\cohom^1(\spec B,\mathcal{W}_{s-1}) =0$. We claim that $\cohom^1(\spec B, \mathcal{W}_s)$ vanishes. Taking the long exact sequence associated with \eqref{W_s_exact} with respect to \(\spec B\) yields:
\begin{multline*}
    0 \longrightarrow \mathcal{W}_1(B) \longrightarrow \mathcal{W}_s(B) \longrightarrow \mathcal{W}_{s-1}(B) \longrightarrow \cohom^1(\spec B, \mathcal{W}_1) \longrightarrow \\
    \longrightarrow \cohom^1(\spec B, \mathcal{W}_s) \longrightarrow \cohom^1(\spec B, \mathcal{W}_{s-1}) \longrightarrow \cdots
\end{multline*}
    Since $\cohom^1(\spec B, \mathcal{W}_1) = \cohom^1(\spec B, \mathcal{W}_{s-1}) = 0$, we get the desired result.
\end{proof}

We will now proceed to prove our main theorem, which states that the group scheme \(\mathcal{W}_{s}\) over \(R\) provides a unified framework for the Kummer--Artin--Schreier--Witt theory.

\begin{theorem}
    Suppose $R$ contains the $p^s$-th root of unity. If $B$ is a flat local $R$-algebra, then any unramified $\mathbb{Z}/p^s$-cover $\spec C \rightarrow \spec B$ is given by a Cartesian diagram of the form:
    \begin{center}
    \begin{tikzcd}
        \spec C \arrow{r} \ar[d] &   \mathcal{W}_s \ar[d, "\psi_s"] \\ 
        \spec B \arrow{r}  &  \mathcal{V}_s
    \end{tikzcd}
    \end{center}
\end{theorem}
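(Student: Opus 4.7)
The plan is to combine the short exact sequence of group schemes
\[
0 \to \mathbb{Z}/p^s \to \mathcal{W}_s \xrightarrow{\psi_s} \mathcal{V}_s \to 0
\]
from Theorem \ref{theoremuniversalKASW} with the vanishing $\cohom^1(\spec B, \mathcal{W}_s) = 0$ just established. First I would take the associated long exact sequence of \'etale cohomology over $\spec B$:
\[
\mathcal{W}_s(B) \xrightarrow{\psi_s} \mathcal{V}_s(B) \xrightarrow{\delta} \cohom^1(\spec B, \mathbb{Z}/p^s) \to \cohom^1(\spec B, \mathcal{W}_s) = 0,
\]
so the connecting map $\delta$ is surjective. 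For this I need to know that $\psi_s$ is an epimorphism of \'etale sheaves on $\spec R$, which is built into the construction in Section \ref{secgroupstructure}, where $\psi_s$ is shown to be \'etale with kernel $\mathbb{Z}/p^s$ on the small \'etale site.

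Next, I would translate the hypothesis on the cover into a cohomology class. An unramified $\mathbb{Z}/p^s$-cover $\spec C \to \spec B$ is precisely a $\mathbb{Z}/p^s$-torsor on $\spec B$, so it corresponds to a unique class $\chi \in \cohom^1(\spec B, \mathbb{Z}/p^s)$. By surjectivity of $\delta$, there exists $f \in \mathcal{V}_s(B)$ with $\delta(f) = \chi$. Viewing $f$ as a morphism $\spec B \to \mathcal{V}_s$, I would form the pullback
\[
\spec C' := \spec B \times_{\mathcal{V}_s, f} \mathcal{W}_s.
\]
Because $\psi_s$ is a $\mathbb{Z}/p^s$-torsor in the \'etale topology, so is $\spec C' \to \spec B$, and by the standard description of the connecting homomorphism as ``pull back a section and record its torsor class,'' the class of $\spec C'$ in $\cohom^1(\spec B, \mathbb{Z}/p^s)$ is $\delta(f) = \chi$. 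Therefore $\spec C' \cong \spec C$ as $\mathbb{Z}/p^s$-torsors over $\spec B$, which is precisely the Cartesian diagram claimed.

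The main obstacle I anticipate is not the cohomological chase itself, which is routine, but rather being explicit about two compatibilities: that $\psi_s$ really is an \'etale sheaf epimorphism with kernel $\mathbb{Z}/p^s$ (so that the long exact sequence makes sense in the form above), and that the boundary map $\delta$ coincides with the pullback-of-torsors map so that $\delta(f) = \chi$ implies $\spec C' \cong \spec C$. Both are standard for short exact sequences of commutative group schemes on the small \'etale site, and the first is essentially Theorem \ref{theoremuniversalKASW} combined with the group-scheme structure of Section \ref{secgroupstructure}; the second is a formal consequence of the definition of $\delta$ via a \v{C}ech cocycle. With these checked, the argument is just assembling the pieces.
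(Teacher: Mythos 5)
Your proposal follows essentially the same route as the paper: take the long exact \'etale cohomology sequence of $0 \to \mathbb{Z}/p^s \to \mathcal{W}_s \to \mathcal{V}_s \to 0$ over $\spec B$, invoke the vanishing $\cohom^1(\spec B, \mathcal{W}_s) = 0$ established immediately before, and conclude that every $\mathbb{Z}/p^s$-torsor over $\spec B$ arises by pulling back $\psi_s$ along some $f \in \mathcal{V}_s(B)$. You spell out the identification of the connecting map with pullback-of-torsors a bit more explicitly than the paper does, but the argument is the same.
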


\begin{proof}
    Taking the long exact sequence from the Kummer sequence:
    \begin{align*}
        0 \longrightarrow \mathbb{Z}/p^s \longrightarrow \mathcal{W}_s \longrightarrow \mathcal{V}_s \longrightarrow 0
    \end{align*}
    yields
    \begin{align*}
        0 \longrightarrow \mathcal{V}_s(B)/\mathcal{W}_s(B) \longrightarrow \mathrm{H}^1(\spec B, \mathbb{Z}/p^s) \longrightarrow \mathrm{H}^1(\spec B, \mathcal{W}_s) = 0.
    \end{align*}
    It implies that \(\mathbb{Z}/p^s\)-torsors over \(\spec B\) are in one-to-one correspondence with elements of
    \(\mathcal{V}_s(B)/\psi_s(\mathcal{W}_s(B))\).
\end{proof}

\section{Matsuda’s Theory and the Refined Swan Conductor}
\label{secrefinedswan}

Let $\mathbb{K}$ be a complete discretely valued field with residue field $\kappa$ of characteristic $p>0$ and uniformizer $\pi$. Let $e$ be the absolute ramification index of $\mathbb{K}$. When $\kappa$ is perfect, classical ramification theory studies the behavior of finite Galois extensions in great detail. One important invariant arising from this theory is the Swan conductor \cite[Chapter IV]{MR554237}, which measures the degree of wild ramification and has many applications in algebraic geometry.

For example, let $X$ be a curve over an algebraically closed field $\kappa$, let $U \subset X$ be a nonempty open subset, and let $\mathscr{F}$ be a $\mathbb{Q}_{\ell}$-sheaf on $U$. Then, for each closed point $x \in X \setminus U$, the Swan conductor $\mathrm{sw}_x(\mathscr{F})$ is a nonnegative integer measuring the wild ramification of the associated extension of $\widehat{\mathcal{O}}_{X,x} \cong \kappa \llbracket t \rrbracket$. The Grothendieck--Ogg--Shafarevich formula can be expressed in terms of $\mathrm{sw}_x(\mathscr{F})$ as follows:
\begin{equation}
\label{eqnGOS}
    \chi_{c}(U_{\overline{\kappa}}, \mathscr{F})
= \operatorname{rank}(\mathscr{F}) \cdot \chi_{c}(U_{\overline{\kappa}}, \mathbb{Q}_{\ell})
- \sum_{x \in X \setminus U} \mathrm{sw}_x(\mathscr{F}).
\end{equation}

When the residue field is not perfect, there is an analogous notion of the
\emph{refined Swan conductor}, which may be viewed as the Swan conductor together with additional differential data. Note that the Swan conductor vanishes precisely when there is no wild ramification; in particular, in our setting this implies that the associated residue field extension is separable. When the Swan conductor is positive, one can write
\begin{equation*}
    \mathrm{rsw}(\mathscr{F}) = \pi^{-e\,\mathrm{sw}(\mathscr{F})} \otimes \omega
    \in \mathfrak{p}_{\mathbb{K}}^{-\mathrm{sw}(\mathscr{F})} \otimes \Omega^1_{\kappa}.
\end{equation*}
Recall that
\[
\mathfrak{p}_{\mathbb{K}}^{t} := \{x\in \mathbb{K} \mid \nu(x)\ge t\}.
\]
The differential $\omega$ is usually called the \emph{differential Swan conductor} of $\mathscr{F}$. Introduced by Kato in 1989 for characters of degree one, this invariant has since been thoroughly studied by various authors, including Deligne, Laumon, Saito, and Abbes. They have approached the subject from different perspectives and obtained numerous applications in a wide range of contexts (see \cite{MR1465067}, \cite{MR629128}, \cite{MR1925338}, \cite{MR3484149}, \cite{MR3935906}, \cite{MR3714509}, \cite{MR4651011}). For instance, formula~\eqref{eqnGOS} extends to higher dimensions \cite{MR2415398} and gives rise to a vanishing cycle formula \cite{MR904945}, a valuable tool for detecting good reduction.

The primary case of interest for the lifting problem is when $U$ is an open dense subset of the closed unit disc $\mathcal{D}=\spec R\{X\}$, where $R$ is a complete discrete valuation ring with residue field $\kappa$. Let $\mathbb{K}$ be the function field of $\mathcal{D}$; its residue field $\kappa(x)$ is imperfect.

\subsection{The equal-characteristic case}
\label{secrswequal}

When $\mathbb{K}$ has equal characteristic $p>0$, the study of
$\mathscr{F} \in \mathrm{H}^1(\mathbb{K}, \mathbb{Q}/\mathbb{Z})$
often suffices to understand many properties of the Swan conductor in this setting. Recall that
\[
\mathrm{H}^1(\mathbb{K}, \mathbb{Z}/p^s)
\cong
W_s(\mathbb{K}) \big/ \wp\!\left(W_s(\mathbb{K})\right)
\]
(see \S\ref{secasw}). In this context, we have the \emph{refined Swan homomorphism}
\cite{MR1465067}:
\begin{equation}
\label{eqnswanASW}
\begin{split}
\mathrm{Rsw}: \mathrm{Fil}_n W_s(\mathbb{K})
&\longrightarrow
\mathrm{Fil}_n \Omega^1_{\mathbb{K}}
\big/
\mathrm{Fil}_{\lfloor n/p \rfloor} \Omega^1_{\mathbb{K}}, \\
(a_0, \ldots, a_{s-1})
&\longmapsto
\omega := \sum_{i=0}^{s-1} a_i^{p^{\,s-i-1}-1}\, d a_i,
\end{split}
\end{equation}
where $n \in \mathbb{Q}_{\ge 0}$, and $\mathrm{Fil}$ on the left-hand
side denotes Brylinski's filtration of Witt vectors
\cite{MR723946}. In particular, when $(a_0, \ldots, a_{s-1})$ is
\emph{best} (see \cite[Definition~2.2]{MR3726102}), the refined Swan
conductor is determined by $\omega$.

\begin{theorem}[{\cite[Lemma 3.7]{MR991978}}]
\label{theoremrswequal}
In the above notation, suppose $(a_0, \ldots, a_{s-1})$ is best. Then
\begin{equation*}
\mathrm{rsw}(\mathscr{F})
= \omega \otimes 1=
\pi^{e \nu(\omega)}
\otimes
\overline{
\pi^{-e \nu(\omega)}
\sum_i a_i^{p^{\,s-i-1}-1} \, d a_i
}
\in
\mathfrak{p}_{\mathbb{K}}^{-\mathrm{sw}(\mathscr{F})}
\otimes
\Omega^1_{\kappa},
\end{equation*}
where $\mathrm{sw}(\mathscr{F}) = -\nu(\omega)$ is the Swan conductor.
\end{theorem}

This description makes it possible to compute and deduce numerous
properties of $\mathrm{sw}(\mathscr{F})$; see
\cite{MR3726102}, \cite{MR3665400}, \cite{MR2567506}, \cite{MR3741881}.

A length-one Witt vector $(a_0) \in W_1(\mathbb{K})$ is best if $\nu(a_0) < 0$
and the reduction of $x_0 := \pi^{-e \nu(a_0)} a_0$ is not a $p$-power.

\begin{corollary}
\label{corrswequals1}
In the above notation, suppose
$\mathscr{F} \in \mathrm{H}^1(\mathbb{K}, \mathbb{Z}/p)$
is represented by a best length-one Witt vector
$(a_0) \in \mathbb{K}$. Then
\begin{equation}
\mathrm{rsw}(\mathscr{F})
= da_0 \otimes 1=
\pi^{e \nu(a_0)} \otimes d\overline{x}_0
\in
\mathfrak{p}_{\mathbb{K}}^{-\mathrm{sw}(\mathscr{F})}
\otimes
\Omega^1_{\kappa},
\end{equation}
where $\mathrm{sw}(\mathscr{F}) = -\nu(a_0)$.
\end{corollary}

\begin{proof}
    Apply Theorem~\ref{theoremrswequal} with $s=1$ and note that for a best length-one Witt vector we have $\omega=da_0$, $\nu(\omega)=\nu(a_0)$, and $d\overline{x}_0=\overline{\pi^{-e\nu(a_0)} d a_0}$.
\end{proof}

\subsection{The mixed-characteristic case}
\label{secrswmixed}

There is no known algorithm to compute the refined Swan conductor of a cyclic character in mixed characteristic $(0,p)$ using its Kummer class like Theorem \ref{theoremrswequal}. The only known case is for
\[
\mathscr{F} \in \mathrm{H}^1(\mathbb{K}, \mathbb{Z}/p)
\cong \mathbb{K}^{\times}/(\mathbb{K}^{\times})^p.
\]
Under certain mild assumptions, we may assume that the extension
associated with $\mathscr{F}$ is \emph{fierce} (see \cite[\S 2.3]{MR3167623}).
We may then further assume that the corresponding Kummer element
$u \in \mathbb{K}^{\times}$ is of one of the following forms
\cite[Proposition~5.2]{MR3167623}:
\begin{enumerate}
\item \label{case1red}
$u \in \mathcal{O}_{\mathbb{K}}^{\times}$ and
$\overline{u} \notin \kappa^p$.
\item \label{case2red}
$u = 1 + \alpha^p w$ with $\alpha \in \mathbb{K}$ satisfying
$0 < \nu(\alpha) < \tfrac{1}{p-1}$, and
$w \in \mathcal{O}_{\mathbb{K}}$ such that
$\overline{w} \notin \kappa^p$.
\end{enumerate}
If $u$ is of one of the above forms, we say that it is \emph{reduced}.

\begin{proposition}[{\cite[Proposition~5.3]{MR3167623}}]
\label{proprsworderpwewers}
Suppose
$\mathscr{F} = \mathfrak{K}_p(u) \in \mathrm{H}^1(\mathbb{K}, \mathbb{Z}/p)$
where $u$ is reduced. Then the refined Swan conductor
$\mathrm{rsw}(\mathscr{F})$ is given as follows.
\begin{enumerate}[label=\arabic*)]
\item \label{rsworderpcase1}
If $\overline{u} \notin \kappa^p$
\emph{(Case~\ref{case1red})}, then
\begin{equation*}
\mathrm{rsw}(\mathscr{F})
=
\lambda^{-p}
\otimes
\frac{d\overline{u}}{\overline{u}}.
\end{equation*}
\item \label{rsworderpcase2}
If $u = 1 + \alpha^p w$
\emph{(Case~\ref{case2red})}, then
\begin{equation*}
\mathrm{rsw}(\mathscr{F})
=
\alpha^p \lambda^{-p}
\otimes
d\overline{w}.
\end{equation*}
\end{enumerate}
\end{proposition}

The following result recovers Corollary~\ref{corrswequals1}
in Case~\ref{case2red}.

\begin{proposition}
\label{proprswmixeds1}
Suppose $\mathscr{F} = \mathfrak{K}_p(u) \in \mathrm{H}^1(\mathbb{K}, \mathbb{Z}/p)$, where $u$ is reduced as in Case~\ref{case2red}. Suppose moreover that $X_0 \in \mathbb{K}$ is the Witt vector coordinate associated with $u$. Set $x_0 := X_0\,\pi^{-e\nu(X_0)}$. Then
\begin{equation*}
\mathrm{rsw}(\mathscr{F}) = -dX_0 \otimes 1 = \pi^{e\nu(X_0)} \otimes \bigl(-d\overline{x}_0\bigr).
\end{equation*}
\end{proposition}

\begin{proof}
We are in the situation of Case~\ref{case2red} of reduced form, so 
$u = 1 + \alpha^p w$. The first Witt vector coordinate is given by
\[
  X_0 = \frac{1}{p\lambda}\ln(1+\alpha^p w).
\]

We expand the terms on the right-hand side to isolate the leading term.
First, the expansion of the logarithm gives
\[
  \ln(1+\alpha^p w) = \alpha^p w + h_1,
\qquad
  \nu(h_1) \ge 2p\nu(\alpha).
\]
Second, consider the expansion $(1+\lambda)^p = 1$, which gives $p\lambda + \lambda^p = -\sum_{k=2}^{p-1} \binom{p}{k} \lambda^k$. 
If $p=2$, then this identity simply reads $2\lambda+\lambda^2=0$, i.e.\ $p\lambda=-\lambda^p$.
Assume now that $p>2$. The term with the lowest valuation on the right is $\binom{p}{2}\lambda^2$, which has valuation $1 + \frac{2}{p-1} = \frac{p+1}{p-1}$. 
Thus, in all cases we have the relation
\[
p\lambda \equiv -\lambda^p \pmod{\mathfrak{p}_{\mathbb{K}}^{\frac{p+1}{p-1}}}.
\]
This implies
\begin{equation}
    \label{eqnlambdap}
    \frac{1}{p\lambda} = -\lambda^{-p} + \delta,
\end{equation}
where the error $\delta$ has valuation strictly larger than $\nu(\lambda^{-p})$.

Multiplying these expansions, we find
\[
\begin{aligned}
  X_0
  &= (-\lambda^{-p} + \delta)(\alpha^p w + h_1)  \\
  &= -\lambda^{-p}\alpha^p w 
     + \left( -\lambda^{-p} h_1 \,+\, \delta\alpha^p w \,+\, \delta h_1 \right).
\end{aligned}
\]

Let $C_0 := -\alpha^p \lambda^{-p}$. The main term $C_0 w$ has valuation
$p\nu(\alpha) - \frac{p}{p-1}$. Since $p\nu(\alpha)>0$, one checks easily that every
term in the parenthesis has valuation strictly larger than $\nu(C_0 w)$.
Thus, we have the key valuation inequality
\[
  \nu(X_0 - C_0 w) > \nu(C_0 w) = \nu(X_0).
\]

Now define the normalized unit
\[
  x_0 := X_0 \pi^{-e\nu(X_0)}.
\]
The valuation inequality above implies that the difference $X_0 - C_0 w$ has strictly larger valuation than $X_0$ itself. After multiplying by $\pi^{-e\nu(X_0)}$, we obtain an element of positive valuation, hence it vanishes in the residue field. Therefore, in $\kappa$ we obtain the clean reduction
\[
\overline{x}_0 = \overline{C}\,\overline{w},
\qquad
C := C_0\,\pi^{-e\nu(X_0)} \in \mathcal{O}_{\mathbb{K}}^{\times},
\]
where $C$ is a unit since $\overline{w}\neq 0$.
Since $\overline{C}$ is a constant scalar in $\kappa^\times$, taking differentials gives the relation
\[
d\overline{x}_0 = \overline{C}\, d\overline{w}.
\]
Substituting this into the expression for $\mathrm{rsw}(\mathscr{F})$ in the statement:
\[
\begin{aligned}
  \mathrm{rsw}(\mathscr{F})
  &= \pi^{e\nu(X_0)} \otimes- d\overline{x}_0 \\
  &= -\pi^{e\nu(X_0)} \otimes \bigl( \overline{C}\, d\overline{w} \bigr) \\
  &= -C_0 \otimes d\overline{w}
   \,=\, \alpha^p \lambda^{-p} \otimes d\overline{w}.
\end{aligned}
\]
This matches the formula in Proposition~\ref{proprsworderpwewers}~\ref{rsworderpcase2}.
\end{proof}

\begin{remark}
\label{remarkorderpcase1}
In Case~\ref{case1red}, we cannot compute the Witt vector $(X_0)$ associated with the Kummer class $u$, since
\[
X_0=\frac{1}{p\lambda}\ln(u),
\]
and $\ln(u)$ is not defined when $u\in\mathcal{O}_{\mathbb{K}}^{\times}$ with $\overline{u}\notin\kappa^{p}$. Nevertheless, we may differentiate formally, writing $d\!\ln(u)=\tfrac{du}{u}$. In the filtered module $\mathfrak{p}_{\mathbb{K}}^{-\frac{p}{p-1}}\otimes\Omega^1_{\kappa}$, we obtain
\[
-dX_0\otimes1=-\frac{1}{p\lambda}\frac{du}{u}\otimes1.
\]

The relation \eqref{eqnlambdap} implies
\[
\frac{1}{p\lambda}\equiv-\lambda^{-p}\pmod{\mathfrak{p}_{\mathbb{K}}^{\frac{p+1}{p-1}}}.
\]
Substituting yields the reduction
\[
-dX_0\otimes1=\lambda^{-p}\otimes\frac{d\overline{u}}{\overline{u}},
\]
which agrees with Proposition~\ref{proprsworderpwewers} \ref{rsworderpcase1}.
\end{remark}

Taken together, these computations indicate that the description provided by Theorem~\ref{theoremrswequal} should admit a mixed-characteristic analogue, with the Witt vector $-(X_{0},\ldots,X_{s-1})$ playing the role of $(x_{0},\ldots,x_{s-1})$. Developing such an extension would offer a unified perspective on refined Swan conductors across characteristics. This direction is the subject of our ongoing work.

\bibliographystyle{plain}
\bibliography{sekiguchisuwa}
\end{document}